\def\red{\color{red}}
\numberwithin{equation}{section}
\newtheorem{theorem}{Theorem}[section]
\newtheorem{proposition}[theorem]{Proposition}
\newtheorem{lemma}[theorem]{Lemma}
\theoremstyle{definition}
\newtheorem{remark}[theorem]{Remark}
\newtheorem{lem}[theorem]{Lemma}
\newcommand{\inn}{{\quad\hbox{in } }}
\def\R{{\mathfrak R}}
\def\begeq{\begin{equation}}
\def\endeq{\end{equation}}
\def\R{\Bbb R}
\begin{document}

\title{Doubling the equatorial for the prescribed scalar curvature
problem on ${\mathbb{S}}^N$ }

\author{Lipeng Duan}
\address{Lipeng Duan,
\newline\indent School of Mathematics and Information Science, Guangzhou University,
\newline\indent Guangzhou 510006,  P. R. China.
}
\email{lpduan777@sina.com}

\author{Monica Musso}
\address{Monica Musso,
\newline\indent Department of Mathematical Sciences, University of Bath,
\newline\indent Bath BA2 7AY, United Kingdom
}
\email{mm2683@bath.ac.uk}

\author{Suting Wei}
\address{Suting Wei,
\newline\indent Department of Mathematics, South China
Agricultural University,
\newline\indent Guangzhou, 510642, P. R. China.
}
\email{stwei@scau.edu.cn}

\begin{abstract}
We consider the prescribed scalar curvature
problem on  $ {\mathbb{S}}^N  $
\begin{align*}
\Delta_{{\mathbb S}^N} v-\frac{N(N-2)}{2} v+\tilde{K}(y) v^{\frac{N+2}{N-2}}=0 \quad \mbox{on} \ {\mathbb S}^N, \qquad v >0 \quad \inn{\mathbb S}^N,
\end{align*}
under the assumptions that the scalar curvature $\tilde K$
is rotationally symmetric, and has a positive local maximum point between the
poles. We prove the existence of infinitely many  non-radial positive solutions, whose energy can be made arbitrarily
large. These solutions are invariant under some non-trivial sub-group of $O(3)$ obtained doubling the equatorial. We use
the finite dimensional  Lyapunov-Schmidt reduction method.

\vspace{2mm}

{\textbf{Keyword:}  Prescribed scalar curvature problem,   Finite dimensional Lyapunov-Schmidt reduction
}

\vspace{2mm}

{\textbf{AMS Subject Classification:}
35A01, 35B09, 35B38.}
\end{abstract}

\date{\today}


\maketitle

\section{Introduction} \label{sec1}
 Given the  $N $-th sphere  $({\mathbb S}^N, g) $ equipped with the standard metric $g$ and a fixed smooth function  $\tilde{K} $, the prescribed scalar curvature problem on ${\mathbb S}^N$ consists in  understanding whether  it is possible to find another metric  $ \tilde{g} $ in the conformal class of  $g $, such that the scalar curvature of  $ \tilde{g} $ is  $\tilde{K} $.
 For a conformal change of the metric
 $$
 \tilde g = v^{4 \over N-2} \, g,
 $$
 for some positive function $v:  {\mathbb S}^N \to \R$, the scalar curvature with respect to $\tilde g$ is given by
 $$
 v^{-{N+2 \over N-2} } \, \left( \Delta_{{\mathbb S}^N} v-\frac{N(N-2)}{2} v \right)
 $$
 where $\Delta_{{\mathbb S}^N}$ is the Laplace-Beltrami operator on ${\mathbb S}^N$.
 Thus the prescribed scalar curvature problem on ${\mathbb{S}}^N$
 can be addressed by studying the solvability of the problem
 \begin{equation}
\label{1.4n}
\Delta_{{\mathbb S}^N} v-\frac{N(N-2)}{2} v+\tilde{K}(y) v^{\frac{N+2}{N-2}}=0 \quad \mbox{on} \ {\mathbb S}^N, \qquad v >0 \quad \inn {\mathbb S}^N.
\end{equation}

\medskip
Testing the equation against $v$ and integrating on ${\mathbb{S}}^N$, we get that
a necessary condition for the solvability of this problem is that  $\tilde{K}(y) $ must be positive somewhere. There are other obstructions for the existence of  solutions in the energy space, which are said to be of topological type. For instance, a solution  $v $ must satisfy the following Kazdan-Warner type condition (see \cite{KW}):
 \begin{equation}
\label{ka}
\int_{{\mathbb S}^N} \nabla \tilde{K}(y) \cdot \nabla y \, v^{\frac{2N}{N-2}} \, d\sigma =0.
\end{equation}
This condition is a direct consequence of Theorem 5.17 in \cite{KW1}: Kazdan and Warner proved that given a positive solution $v$ to
$$
	\Delta_{{\mathbb S}^N} v-\frac{N(N-2)}{2} v+H(y) v^{a}=0
$$
on the standard sphere ${\mathbb{S}}^N$, $N\geq 3$, then
$$
\int_{{\mathbb{S}}^N} v^{a+1} \nabla H \cdot \nabla F = {1\over 2} (N-2) \left( {N+2 \over N-2}- a \right) \int_{{\mathbb{S}}^N} v^{a+1} H \, F \,
$$
for any spherical harmonics $F$ of degree $1$. Taking $a={N+2 \over N-2}$, $H= \tilde K$ and $F=y$,  we obtain \eqref{ka}.

The problem of determining which  $\tilde{K}(y) $ admits a solution has been the object of several studies in the past years. We refer the readers to \cite{BC91JFA,B-96-CPDE,BP,CY-91-DM,
CL-98-JDG, ChenDing, D1,H-90-DM, KW,SZ-96-CVPDE}, and the references therein.



\medskip
The prescribed scalar curvature problem on ${\mathbb{S}}^N$ \eqref{1.4n} can be transformed into a semi-linear elliptic equation in the flat space $\R^N$ via the stereo-graphic projection. Letting $D^{1,2}(\mathbb{R}^N) $ denote the completion of  $C_c^\infty(\R^N)  $ with respect to the norm  $\int_{\mathbb{R}^N}|\nabla v|^2 $, solving \eqref{1.4n} in the energy space is equivalent to solving
 \begin{align}\label{Prob1}
\Delta v+ K(y) v^{2^*-1} =0, \quad v >0,  \quad \text{in}~ \mathbb{R}^N,\,
v\in D^{1,2}(\mathbb{R}^N).
\end{align}

\medskip
Because of its geometry roots, it is of interest to establish under what kind of assumptions on $K$ problem  \eqref{Prob1} admits one or multiple solutions.

For  $N=3 $, Y.Y. Li \cite{L-93-CPAM} showed problem \eqref{Prob1} has infinitely many solutions provided that  $ K(y) $ is bounded below, and periodic in one of its variables, and the set  $\{x\, | \, K(x)=\max_{y\in \R^3}K(y)\} $ is not empty and contains at least one bounded connected component.

If $K$ has the form $K(y)=1+\epsilon h(y) $, namely it is   a perturbation of  the constant $1 $, D. Cao, E. Noussair and S. Yan \cite{CEY-02-CPDE} proved the existence of multiple solutions.

If  $K(y) $ has a sequence of strictly local maximum points moving to infinity, S. Yan \cite{Y-00-JDE} constructed infinitely many solutions.

In \cite{WY-10-JFA}, J. Wei and S. Yan showed that problem \eqref{Prob1} has infinitely many solutions provided  $K $ is radially symmetric $K(y) = K(r)$, $r= |y|$, and has a local maximum around a given $r_0 >0$. More precisely, they ask that there are  $ r_0  $,  $c_0> 0 $ and $ m \in [2, N-2)  $ such that
\begin{equation*}
K(s)= K(r_0)-c_0|s-r_0|^m+O\big({|s-r_0|^{m+\sigma}}\big),
 \quad s\in(r_0-\delta, r_0+\delta),
\end{equation*}
for some  $\sigma, \delta $  small positive constants.  These solutions are obtained by gluing together a large number of  Aubin-Talenti bubbles (see \cite{Talenti})
\begin{align*}
U_{x, \Lambda}(y)= c_N \Big(\frac{\Lambda}{1+\Lambda^2|y-x|^2}\Big)^{\frac{N-2}{2}}, \, c_N= [N(N-2)]^{\frac{N-2}{4}}.
\end{align*}
For any $x \in \R^N$ and $\Lambda \in \R^+$, these functions solve
\begin{equation}\label{criticalequation}
	\Delta u+  u^{N+2 \over N-2}=0 \quad ~ \text{in}~ \mathbb{R}^N.
\end{equation}
In fact they are the only positive solutions to \eqref{criticalequation}. At main order their solution looks like
\begin{equation*}
	\tilde  u_k \sim \sum_{j=1}^k U_{x_j, \bar \Lambda},
\end{equation*}
where  $ \bar \Lambda $ is a positive constant and the points $x_j$ are distributed along the vertices of a regular polygon of $k$ edges in the $(y_1,y_2)$-plane, with $|x_j| \to r_0$ as $k \to \infty$:
\begin{equation*}
	x_j=\Big(\tilde r \cos\frac{2(j-1)\pi}k, \tilde r\sin\frac{2(j-1)\pi}k, 0,\cdots,0\Big),\quad j=1,\cdots,k,
\end{equation*}
with  $ \tilde r \rightarrow  r_0  $ as  $ k \rightarrow \infty $.

Under the weaker symmetry conditions for  $K(y)=K(|y'|,y'') $ with  $y=(|y'|,y'')\in \R^2\times \R^{N-2} $, S. Peng, C. Wang and S. Wei \cite{PWW-19-JDE} constructed infinitely many bubbling solutions, which concentrate at the saddle points of the potential  $K(y) $. Their method uses local Pohozaev identities adapted to  problem \eqref{Prob1}. Y. Guo and B. Li \cite{Gyslb} generalized these results for problems \eqref{Prob1} with polyharmonic operators \cite{WY-10-JFA}. For the fractional scalar field equation, we refer to \cite{Gyxnjj,LPY-16-DCDS}.

The study of other aspects of problem \eqref{Prob1}, such as radial symmetry of their solutions, uniqueness of solutions, Liouville type theorem, a prior estimates, and bubbling analysis, have been the object of investigation of several researchers. We refer the readers to the papers \cite{AAP,DLY, L-93-JFA, L-96-CPAM,  LWX,  LL-90-AA, N-82-In,NY-01-NA,Y-00-JDE} and the references therein.

\medskip
 Recently, Y. Guo, M. Musso, S. Peng and S. Yan \cite{GMPY} investigated the spectral property of the linearized problem  associated to \eqref{Prob1} around  the solution $ \tilde u _k  $ found in \cite{WY-10-JFA}. They  proved a non-degeneracy result for such operator  using a refined version of local Pohozaev identities. As an application of this  non-degeneracy result, they built new type of solutions by gluing another large number of bubbles, whose centers lie near the circle  $|y|= r_0 $ in the  $(y_3,y_4) $-plane.

\medskip
All these results concern solutions made by gluing together Aubin-Talenti bubbles with centres distributed along the vertices of one or more planar polygons, thus of two-dimensional nature.
The purpose of this paper is to present a different type of solution to \eqref{Prob1} with a more complex concentration structure, which cannot be reduced to a two-dimensional one.


\medskip
To present our result,  we made the following assumptions on $K$: it is radially symmetric and \\[2mm]
  ${\bf H}  $ : There are  $ r_0  $ and  $c_0> 0 $ such that
\begin{equation*}
K(s)= K(r_0)-c_0|s-r_0|^m+O\big({|s-r_0|^{m+\sigma}}\big),
 \quad s\in(r_0-\delta, r_0+\delta),
\end{equation*}
 where  $\sigma, \delta >0 $ are small constants,
\begin{align}\label{assumptionform}
 m \in \begin{cases}
[ 2, N-2) \quad  &\text{if} \quad N=5 ~\text{or} ~6,
  \\[2mm]
  \big(\frac{N-2}{2},  N-2\big) \quad  &\text{if} \quad N \ge 7.
\end{cases}
\end{align}
There is a slight difference between  our assumptions on  $ K(s) $  and the ones  in \cite{WY-10-JFA}. We will comment on this issue later in the introduction.

\medskip
Without loss of generality, we assume  $ r_0= 1, \,   K(1) =1$. For any integer $k$, we define $${\bf r}=k^{\frac{N-2}{N-2-m}} .$$  We  scale $ u(y)={\bf r}^{-\frac{N-2}{2}}v \big(\frac{|y|}{{\bf r}}\big) $, so that problem \eqref{Prob1} becomes
\begin{align}\label{Prob2}
-\Delta u=  K\Big(\frac{|y|}{{\bf r}}\Big) u^{2^*-1}, \quad u >0,  \quad \text{in}~ \mathbb{R}^N,
\quad
u \in D^{1,2}(\mathbb{R}^N),
\end{align}
where $$2^*= {2N \over N-2}.$$
Consider the points
\begin{align*}
 \begin{cases} \overline{x}_j= r \Big(\sqrt {1-h^2} \cos{\frac{2(j-1) \pi}{k}},  \sqrt {1-h^2} \sin{\frac{2(j-1) \pi}{k}}, h,{\bf{0}}\Big), \quad j= 1, \cdots, k,
 \\[4mm]
 \underline{x}_j= r \Big(\sqrt {1-h^2}  \cos{\frac{2(j-1) \pi}{k}},  \sqrt {1-h^2} \sin{\frac{2(j-1) \pi}{k}},-h, {\bf{0}}\Big), \quad j= 1, \cdots, k,
 \end{cases}
\end{align*}
where  $ {\bf{0}} $ is the zero vector in  $\mathbb{R}^{N-3} $ and  $ h, r  $ are positive parameters.

We define
 \begin{align}\label{Wrh1}
W_{r,h,\Lambda} (y)
&= \sum_{j=1}^kU_{\overline{x}_j, \Lambda}  (y) +\sum_{j=1}^k U_{\underline{x}_j, \Lambda}  (y), \quad y \in \R^N.
\end{align}
We will produce a family of solutions to \eqref{Prob2} which are small perturbations of $W_{r,h,\Lambda}$, for any integer $k$ sufficiently large. The Aubin-Talenti bubbles are now centred at points lying on the top and the bottom circles of a cylinder and this configuration is now invariant under a non-trivial sub-group of $O(3)$ rather than $O(2)$.

Throughout of the present paper, we assume  $N\ge 5$ and $(r,h, \Lambda) \in{{\mathscr  S}_k} $, where
\begin{align}\label{definitionofsk}
 {{\mathscr S}_k}
= \Bigg\{&(r,h,\Lambda) \big|\,  r\in \Big[k^{\frac{N-2}{N-2-m}}-\hat \sigma, k^{\frac{N-2}{N-2-m}}+\hat  \sigma \Big], \quad
\Lambda \in \Big[\Lambda_0-\hat   \sigma,  \Lambda_0+\hat \sigma\Big], \nonumber
\\[2mm]
& \qquad \qquad
 h \in \Big[\frac{B'}{k^{\frac{N-3}{N-1}}}  \Big(1-\hat   \sigma\Big),
\frac{B'}{k^{\frac{N-3}{N-1}}} \Big(1+\hat \sigma\Big)\Big] \Bigg\},
 \end{align}
 with  $ \Lambda_0,  B' $ are the constants in  \eqref{lambda0}, \eqref{B'} and  $ \hat \sigma$ is a small fixed number, independent of $k$. For  convenience, we denote
   $$ {\bf r}=k^{\frac{N-2}{N-2-m}},\quad{\bf h}= \frac{B'}{k^{\frac{N-3}{N-1}}}. $$
   Since $h \to 0$ as $k \to \infty$, the two circles where the points $\overline x_j$ and $\underline x_j$ are distributed become closer to each other as $k$ increases.

%

\medskip
In this paper, we prove that for any  $k $ large enough problem \eqref{Prob2} has  a family of solutions $u_k$ with the approximate form
\begin{equation}\label{approximatesolution2}
u_k(y) \sim W_{r,h,\Lambda}.
\end{equation}
These solutions   have polygonal symmetry in the  $(y_1,y_2) $-plane, are even in the  $y_3 $ direction and radially symmetric in the variables  $y_4,\cdots,y_N $.  Our solutions are thus different from the ones obtained in \cite{WY-10-JFA} and have strong analogies with the doubling construction of the entire finite energy sign-changing solutions  for the Yamabe equation in \cite{medina}.

\medskip
 Define the symmetric Sobolev space:
\begin{align*}
H_s= \Bigg\{& u : u \in H^1(\mathbb{R}^N),  \text{ $u $  is even in  $ y_\ell, \, \ell= 2, 3, 4, \cdots, N, $} \quad  \nonumber
\\[2mm]
&\qquad u \Big(\sqrt{y_1^2+y_2^2} \cos \theta, \sqrt{y_1^2+y_2^2} \sin \theta, y_3, y''\Big)
\\[2mm]
& \qquad= u \Big(\sqrt{y_1^2+y_2^2} \cos {\big(\theta+\frac{2j\pi}{k}\Big)}, \sqrt{y_1^2+y_2^2} \sin {\big(\theta+\frac{2j\pi}{k}\Big)}, y_3, y''\Big)  \Bigg\},
\end{align*}
where  $ \theta= \arctan{\frac{y_2}{y_1}} $.
\medskip
Let us define the following norms which capture the decay property of functions
\begin{equation}\label{star}
\|u\|_{*}= \sup_{y \in \R^N}\Bigl(
\sum_{j=1}^k \Big[\frac1{(1+|y-\overline{x}_j|)^{\frac{N-2}{2}+\tau
}}+\frac1{(1+|y-\underline{x}_j|)^{\frac{N-2}{2}+\tau
}}\Big] \Bigr)^{-1}|u(y)|,
\end{equation}
and
\begin{equation}\label{starstar}
\|f\|_{**}= \sup_{y \in \R^N}\Bigl(\sum_{j=1}^k \Big[\frac1{(1+|y-\overline{x}_j|)^{\frac{N+2}2+\tau
}}+\frac1{(1+|y-\underline{x}_j|)^{\frac{N+2}2+\tau
}}\Big]\Bigr)^{-1}|f(y)|,
\end{equation}
where  \begin{align}  \label{tau and epsilon}
\tau= (\frac{N-2-m}{N-2},  \frac{N-2-m}{N-2}+ \epsilon_1),
\end{align}
 for some $\epsilon_1$ small.
The main results of this paper are  the following:

\medskip
\begin{theorem}\label{main1}
Suppose that $K(|y|)$ satisfies ${\bf H}$ and  $N\ge 5$. Then there exists a large integer $k_0$, such that for each integer $k\geq k_0$, problem \eqref{Prob2} has a solution  $u_k$ of the form
\begin{align}\label{u_k}
u_k(y)= W_{r_k, h_k,\Lambda_k}(y)+\phi_k(y),
\end{align}
where  $ \phi_k \in  H_s, \, (r_k,s_k, \Lambda_k) \in {{\mathscr  S}_k}$ and  $ \phi_k  $ satisfies
\begin{align*}
\|\phi _k\|_{*} =  o_k(1), \quad {\mbox {as}} \quad k \to \infty.
\end{align*}
Equivalently,  problem \eqref{Prob1} has solution  $v_k(y)  $ of the form
\begin{align*}
v_k(y)={\bf r}^{\frac{2-N} 2} \Big[W_{r_k, h_k,\Lambda_k}({\bf r} y)+\phi_k({\bf r} y)\Big].
\end{align*}

\end{theorem}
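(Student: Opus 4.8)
The plan is to carry out the finite-dimensional Lyapunov–Schmidt reduction in the symmetric space $H_s$. Fix $(r,h,\Lambda)\in\mathscr S_k$ and look for a solution of \eqref{Prob2} of the form $u_k=W_{r,h,\Lambda}+\phi$ with $\phi\in H_s$ small in the $\|\cdot\|_*$ norm. Substituting into \eqref{Prob2}, the equation for $\phi$ becomes $L\phi=E+N(\phi)$, where $L$ is the linearization of \eqref{Prob2} at $W_{r,h,\Lambda}$, $E=\Delta W_{r,h,\Lambda}+K(|y|/{\bf r})W_{r,h,\Lambda}^{2^*-1}$ is the error of the ansatz, and $N(\phi)$ collects the superlinear remainder. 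The first technical step is the estimate of $\|E\|_{**}$: one splits $\R^N$ into the regions close to each centre $\overline x_j,\underline x_j$ and the complementary far region, uses assumption ${\bf H}$ to bound the contribution of $K(|y|/{\bf r})-1$, and uses the definition ${\bf r}=k^{(N-2)/(N-2-m)}$ to balance that contribution against the polygonal bubble–bubble interactions on each circle and against the coupling between the top and bottom circles. The admissible range \eqref{assumptionform} of $m$ and the choice of $\tau$ in \eqref{tau and epsilon} are exactly what make $\|E\|_{**}$ finite and small.

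Next I would establish the a priori invertibility of $L$ on the subspace of $H_s$ that is $L^2$-orthogonal to the approximate kernel spanned by the derivatives of $U_{\overline x_j,\Lambda}$ and $U_{\underline x_j,\Lambda}$ in the parameters $r,h,\Lambda$. Because of the imposed polygonal symmetry in $(y_1,y_2)$, evenness in $y_3$, and radial symmetry in $y_4,\dots,y_N$, all the translation and dilation kernel modes of the Aubin–Talenti bubbles collapse onto just these three families, so the orthogonality is against a genuinely three-dimensional space and the eventual reduced problem is three-dimensional. The bound $\|\phi\|_*\lesssim\|L\phi\|_{**}$ on this subspace follows by the usual blow-up/contradiction argument (as in \cite{WY-10-JFA}), invoking the non-degeneracy of $U$ and the separation of the $2k$ centres. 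Combining this with the estimate on $\|E\|_{**}$ and the quadratic nature of $N(\phi)$, a contraction mapping argument in a ball $\{\|\phi\|_*\le C\|E\|_{**}\}$ produces, for every $(r,h,\Lambda)\in\mathscr S_k$, a unique solution $\phi=\phi_{r,h,\Lambda}$ of the projected problem, depending in a $C^1$ fashion on the parameters, with $\|\phi_{r,h,\Lambda}\|_*=o_k(1)$.

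The third step is the variational reduction. Critical points in $(r,h,\Lambda)$ of $F(r,h,\Lambda):=I\big(W_{r,h,\Lambda}+\phi_{r,h,\Lambda}\big)$, where $I(u)=\tfrac12\int_{\R^N}|\nabla u|^2-\tfrac1{2^*}\int_{\R^N}K(|y|/{\bf r})u_+^{2^*}$, correspond to genuine solutions of \eqref{Prob2}, because the vanishing of $\nabla F$ forces all the Lagrange multipliers attached to the approximate kernel to vanish. Hence the task reduces to expanding $F$. Its leading terms are: a constant times $k$ coming from the energies of the $2k$ bubbles; a curvature term of order $-k\,c_0|r/{\bf r}-1|^m$ produced by ${\bf H}$; the intra-circle interaction of neighbouring bubbles on the same polygon, of order $k\,(k/r)^{N-2}$; the inter-circle interaction between $\overline x_j$ and $\underline x_j$, which (since these centres lie at distance of order $rh$) is of order $k/(rh)^{N-2}$; and a $\Lambda$-dependent term from the interaction of each bubble with the local curvature profile. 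Collecting these and optimising in $h$ singles out the scale ${\bf h}=B'/k^{(N-3)/(N-1)}$, while optimising in $r$ and $\Lambda$ fixes the centre ${\bf r}$ and the value $\Lambda_0$ that define $\mathscr S_k$ in \eqref{definitionofsk}. One then shows that, within $\mathscr S_k$, the reduced function $F$ possesses a critical point — for instance a strict local maximum, or a min–max point, of its explicit leading part, which persists under the $C^1$-small correction — thereby yielding $(r_k,h_k,\Lambda_k)\in\mathscr S_k$ and the solution $u_k=W_{r_k,h_k,\Lambda_k}+\phi_k$. Undoing the scaling $u(y)={\bf r}^{-(N-2)/2}v(|y|/{\bf r})$ gives the corresponding $v_k$ solving \eqref{Prob1}.

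The main obstacle I expect is the precise bookkeeping of the inter-circle interaction and its interplay with the error estimate. Since $h\to0$ as $k\to\infty$, the two families of bubbles are only weakly separated, so both $\|E\|_{**}$ and the expansion of $F$ must track this coupling to high precision; it is exactly here that the stronger requirement $m\in\big(\tfrac{N-2}{2},N-2\big)$ for $N\ge7$ is needed, to ensure that the curvature-induced error stays below the (already small) top–bottom interaction and that the resulting three-variable reduced function is non-degenerate in $h$, allowing the critical point to be located inside $\mathscr S_k$.
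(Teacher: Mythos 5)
Your proposal follows essentially the same route as the paper: the same ansatz $W_{r,h,\Lambda}+\phi$ in the same symmetry class, the weighted norms $\|\cdot\|_*$, $\|\cdot\|_{**}$, inversion of the linearized operator orthogonally to the three families of kernel elements, a contraction mapping for the projected problem, and a three-variable reduced functional $F(r,h,\Lambda)$ whose leading terms (polygonal interaction $\sim k(k/r)^{N-2}$, top--bottom interaction $\sim k/(rh)^{N-3}\cdot k/r^{N-2}$, and the curvature term from ${\bf H}$) determine ${\bf r}$, ${\bf h}$ and $\Lambda_0$ exactly as you describe. The only point where your sketch is more optimistic than the actual argument is the final step: the remainder in the expansion of $F$ is \emph{not} $C^1$-small relative to the $h$-dependent leading term (it is of order $k^{-\frac{m(N-2)}{N-2-m}-\sigma}$ versus $k^{-\frac{m(N-2)}{N-2-m}-\frac{2(N-3)}{N-1}}$), so the paper instead shows that the dangerous part of the remainder is independent of $h$ (the $\mathcal{C}(r,\Lambda)$ terms), expands $\partial_h F$ and $\partial_\Lambda F$ separately using the refined estimates on $\phi_{r,h,\Lambda}$, and locates the critical point by a gradient-flow min--max argument on the boundary of ${\bf S}_k$ rather than by simple persistence of a nondegenerate critical point of the leading part.
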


%

 \medskip
Let us outline the main ideas in the proof of Theorem \ref{main1}. The first step in our argument is to find $\phi$ so that
$u=W_{r,h,\Lambda}+\phi$ solves the auxiliary  problem
\begin{equation}\label{re}
	\begin{cases}
		-\Delta \bigl( W_{r,h,\Lambda}+\phi \bigr)
		=K\bigl(\frac{|y|}{{\bf r}}\bigr)\bigl(W_{r,h,\Lambda}+\phi \bigr)^{2^*-1}
		\\[2mm]
		\qquad \qquad \qquad \qquad+\sum\limits_{\ell=1}^3 \sum\limits_{j=1}^k {c_\ell}
		\Big(\, U_{\overline{x}_j,\Lambda}^{2^*-2}\overline{\mathbb{Z}}_{\ell j}+U_{\underline{x}_j,\Lambda}^{2^*-2}\underline{\mathbb{Z}}_{\ell j}\,\Big)\, \; \text{in}\;
		\R^N,\\[2mm]
		\phi \in  \mathbb{E},
	\end{cases}
\end{equation}
for some scalars $ c_\ell  $ for  $ \ell= 1,2,3 $. In \eqref{re}, the functions $\overline{\mathbb{Z}}_{\ell j}$ and $\underline{\mathbb{Z}}_{\ell j}$ are given by
\begin{align*}
&\overline{\mathbb{Z}}_{1j}=\frac{\partial U_{\overline{x}_j, \Lambda}}{\partial r},
\qquad \qquad
\overline{\mathbb{Z}}_{2j}=\frac{\partial U_{\overline{x}_j, \Lambda}}{\partial h},
\qquad \qquad
\overline{\mathbb{Z}}_{3j}=\frac{\partial U_{\overline{x}_j, \Lambda}}{\partial \Lambda}, \nonumber
\\[2mm]
& \underline{\mathbb{Z}}_{1j}=\frac{\partial U_{\underline{x}_j, \Lambda}}{\partial r},
\qquad \qquad
\underline{\mathbb{Z}}_{2j}=\frac{\partial U_{\underline{x}_j, \Lambda}}{\partial h},
\qquad \qquad
\underline{\mathbb{Z}}_{3j}=\frac{\partial U_{\underline{x}_j, \Lambda}}{\partial \Lambda},
\end{align*}
for $ j= 1, \cdots, k $. Moreover the function $\phi$ belongs to the set $\mathbb{E}$ given by
\begin{align}\label{SpaceE}
\mathbb{E}= \Big\{v:  v\in H_s, \quad &\int_{{\mathbb{R}}^N} U_{\overline{x}_j, \Lambda}^{2^*-2}  \overline{\mathbb{Z}}_{\ell j} v= 0 \quad\text{and}  \nonumber
\\[2mm]
\quad & \int_{{\mathbb{R}}^N} U_{\underline{x}_j, \Lambda}^{2^*-2}  \underline{\mathbb{Z}}_{\ell j} v= 0, \quad j= 1, \cdots, k, \quad \ell= 1,2, 3  \Big\}. \end{align}
From the linear theory developed in Section \ref{sec3}, \eqref{re} can be solved by means of the contraction mapping theorem. More precisely, we prove that, for any $(r,h,\Lambda ) \in  {{\mathscr S}_k}$ there exist $\phi = \phi_{r,h,\Lambda} \in \mathbb{E} $ and constants $c_\ell$, $\ell =1,2,3$  which solve the auxiliary problem \eqref{re}.

The  results in Theorem \ref{main1} can be then proved if there exists a choice of $(r,h,\Lambda ) \in {{\mathscr S}_k} $ so that the multipliers  $c_\ell\,(\ell= 1,2,3) $ in \eqref{re} can be made equal zero.
We now make the following observation:  take the functional corresponding to the problem \eqref{Prob2}
\begin{equation}\label{energyfunctial}
	I(u)=\frac{1}{2}\int_{{\mathbb{R}}^N}|\nabla u|^2 dy-\frac{1}{2^{*}}\int_{\R^N}K\Big(\frac{|y|}{{\bf r}}\Big)|u|^{2^*} dy.
\end{equation}
 If we can choose   $(r ,h,\Lambda)  $ to be a critical point of function
\[ F(r,h,\Lambda) := I(W_{r, h, \Lambda}+\phi_{r, h, \Lambda})
\quad \text{for}  ~ \phi \in{\mathbb{E}}, \]
then the constants  $ c_ \ell,\,\ell=1,2,3 $ would be zero. Thus finding solutions of problem \eqref{Prob2} would be reduced to find a critical point of  $ F(r,h,\Lambda) $. This is the result in Proposition \ref{pro2.4}.

An important work of this paper is to give an accurate expression of  $ F(r,h,\Lambda) $ (see Proposition \ref{pr0position2,6}).
Under the assumptions  $ r \sim k^{\frac{N-2}{N-2-m}},  h \rightarrow 0, \frac 1 {hk} \rightarrow 0 $ as  $k \rightarrow \infty $, we first get  the expansion of energy functional $ I(W_{r,h,\Lambda} )$
\begin{align*}
	F_1  (r,h,\Lambda) : = I(W_{r,h,\Lambda} )
	& \,=\,  k  A_1 -\frac{k}{\Lambda^{N-2}} \Big[\frac{B_4 k^{N-2}}{(r \sqrt{1-h^2})^{N-2}}\,+\, \frac{B_5 k}{r^{N-2} h^{N-3} \sqrt {1-h^2}}\Big]
	\\[2mm]
	& \quad+k \Big[\frac{A_2}{\Lambda^{m}  k^{\frac{(N-2)m}{N-2-m}}}
	+\frac{A_3}{\Lambda^{m-2}  k^{\frac{(N-2)m}{N-2-m}}}
	({\bf r}-r)^2\Big]
	+k  O \Big(\frac{1}{k^{\frac{(N-2)m}{N-2-m}+\sigma}}\Big),
\end{align*}
where $ A_i$ for $i=1,2,3$ and  $ B_j$ for $ j=4,5 $ are constants.
Denote
\begin{align*}
	\mathcal {G}(h) &:=
	\frac{B_4 k^{N-2}}{(\sqrt{1-h^2})^{N-2}}\,+\, \frac{B_5 k}{h^{N-3} \sqrt {1-h^2}}.
\end{align*}
Let  $h $ be the solution of $ \partial _h  \mathcal {G}(h)= 0$, then
$$ h=  \frac{B'}{k^{\frac{N-3}{N-1}}} \big(1+o(1)\big),  \quad {\mbox {as}} \quad k \to \infty$$
for some  $B' >0 $.
If $ r \sim k^{\frac{N-2}{N-2-m}}, h \sim \frac{B'}{k^{\frac{N-3}{N-1}}} $, then
\begin{equation*}
	\frac{B_5 k}{r^{N-2} h^{N-3} \sqrt {1-h^2}} =  \frac{\tilde B}{k^{\frac{(N-2)m}{N-2-m}+\frac{2(N-3)}{N-1}}}(1+o(1)) , \quad {\mbox {as}} \quad k \to \infty
\end{equation*}
for some constant $\tilde B$.

However, we now find  that  the term $O \Big(\frac{1}{k^{\frac{(N-2)m}{N-2-m}+\sigma}}\Big) $ in the expansion of $F_1 (r,h, \Lambda)$   competes with the term  $\frac{B_5 k}{r^{N-2} h^{N-3} \sqrt {1-h^2}}$, making it impossible to identify a critical point for  $  F_1  (r,h,\Lambda)$.  In reality though the remainder $O \Big(\frac{1}{k^{\frac{(N-2)m}{N-2-m}+\sigma}}\Big) $ can be estimated in a more   accurate way (see Proposition \ref{func}) under our assumptions   ${\bf H} $.

We need to expand the full energy $ F(r,h,\Lambda) = I(W_{r, h, \Lambda}+\phi_{r, h, \Lambda})$. We need a strong control on  the size of $\phi_{r, h, \Lambda}$ in order not to destroy the critical point structure of $F_1 (r,h, \Lambda)$ and to ensure the qualitative properties of the solutions as stated in Theorem \ref{main1}. This is another delicate step of our construction, where we make full use of the assumption  ${\bf H}$ on $K$.

\medskip
{\bf Structure of the paper.} The remaining part of this paper is devoted to the proof of Theorem \ref{main1}, which will be organized as follows:

 \begin{itemize}
\item[1.]
In Section \ref{sec3}, we will establish the linearized theory for the linearized projected problem. We will give estimates for the error terms in this Section.

\item[2.]
In Section \ref{sec4}, we shall proof Theorem \ref{main1} by showing there exists a critical point of reduction function  $  F(r,h,\Lambda) $.

\item[3.]
Some tedious computations and some useful Lemmas will be given in Appendices \ref{appendixA}-\ref{appendixB}.
 \end{itemize}


\medskip

{\bf Notation and preliminary results.} For the readers' convenience, we will provide a collection of notation. Throughout this paper, we employ  $C,  C_j  $ to denote certain constants and  $ \sigma, \tau,  \sigma_j  $ to denote some small constants or functions.  We also note that $ \delta_{ij} $  is Kronecker delta function:
\[ \delta_{ij}= \begin{cases} 1,  \quad \text{if} ~ i= j, \\[2mm]
0,  \quad \text{if} ~ i \neq j.
\end{cases} \]
Furthermore, we  also  employ the common notation  by writing  $O(f(r,h)), o(f(r,h))  $ for the functions which satisfy
\begin{equation*}
\text{if} \quad g(r,h) \in O(f(r,h)) \quad \text{then}\quad {\lim_{k \to+\infty}} \Big|\, \frac{g(r,h)}{f(r,h)} \, \Big|\leq C<+\infty,
\end{equation*}
and
\begin{equation*}
\text{if} \quad g(r,h) \in o(f(r,h)) \quad \text{then}\quad {\lim_{k \to+\infty}} \frac{g(r,h)}{f(r,h)}=0.
\end{equation*}

\section{Finite dimensional reduction}  \label{sec3}

For  $j= 1,\cdots, k $, we divide $\mathbb{R}^N$ into $k$ parts:
\begin{align*}
\Omega_j := &\Big\{y=(y_1, y_2, y_3, y'') \in \mathbb{R}^3 \times \mathbb{R}^{N-3}:
\nonumber\\[2mm]
& \qquad\Big\langle \frac{(y_1, y_2)}{|(y_1, y_2)|},  \Big(\cos{\frac{2(j-1) \pi}{k}}, \sin{\frac{2(j-1) \pi}{k}}\Big)  \Big\rangle_{\mathbb{R}^2}\geq \cos{\frac \pi k}\Big\}.
\end{align*}
where $ \langle , \rangle_{\mathbb{R}^2}$ denote the dot product in $\mathbb{R}^2$. For  $\Omega_j$, we further divide it into two parts:
\begin{align*}
\Omega_j^+= & \Big\{y:  y=(y_1, y_2, y_3, y'')  \in  \Omega_j, y_3\geq0 \Big\},
\\[2mm]
\Omega_j^-= & \Big\{y: y=(y_1, y_2, y_3, y'')  \in  \Omega_j, y_3<0 \Big\}.
\end{align*}
We can know that
$$\mathbb{R}^N= \cup_{j=1}^k  \Omega_j,  \quad \Omega_j=  \Omega_j^+\cup  \Omega_j^-$$
and
$$ \Omega_j \cap  \Omega_i=\emptyset,  \quad  \Omega_j^+\cap  \Omega_j^-=\emptyset, \qquad \text{if}\quad i\neq j.$$

\medskip
We consider the following linearized problem
\begin{equation}\label{lin}
\begin{cases}
-\Delta {\phi}-(2^*-1)K\big(\frac{|y|}{{\bf r}}\big)
W_{r,h,\Lambda}^{2^*-2}\phi=f+\sum\limits_{i=1}^k\sum\limits_{\ell=1}^3 \Big({c}_\ell U_{\overline{x}_i,\Lambda}^{2^*-2}\overline{\mathbb{Z}}_{\ell i}+{c}_\ell U_{\underline{x}_i,\Lambda}^{2^*-2}\underline{\mathbb{Z}}_{\ell i}\Big)
\;\;
\text{in}\;
\R^N,\\[2mm]
\phi\in \mathbb{E},
\end{cases}
\end{equation}
for some constants $c_{\ell} $.

\medskip
Coming back to equation \eqref{criticalequation}, we recall that the functions
\begin{equation}\label{Zj}
	Z_i(y) :={\partial U \over \partial y_i}(y), \quad i=1, \ldots , N, \quad
	Z_{N+1}(y) :={N-2 \over 2} U(y)+y\cdot \nabla U(y).
\end{equation}
belong to the null space of the linearized problem associated to \eqref{criticalequation} around an Aubin-Talenti bubble, namely they solve
\begin{equation}\label{linearized}
	\Delta \phi+(2^*-1) U^{2^*-2} \phi=0,~ \text{in}~ \mathbb{R}^N, \quad \phi \in D^{1,2}(\mathbb{R}^N).
\end{equation}
It is known \cite{Re} that these functions  span the set of the solutions to \eqref{linearized}. This fact will be used in
 the following crucial lemma which concerns the linearized problem \eqref{lin}.

\medskip
\begin{lem}\label{lem2.1}
 Suppose that $\phi_{k}$ solves \eqref{lin} for $f=f_{k}$. If  $\|f_{k}\|_{**}$ tends to zero as  $k$ tends to infinity, so does  $\|\phi_{k}\|_{*}$.
 \end{lem}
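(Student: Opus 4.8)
The plan is to argue by contradiction, following the standard blow-up scheme for Lyapunov--Schmidt linear theory. Suppose the conclusion fails: then there exist sequences $k\to\infty$, parameters $(r,h,\Lambda)\in{\mathscr S}_k$, functions $f_k$ with $\|f_k\|_{**}\to 0$, constants $c_\ell=c_{\ell,k}$, and solutions $\phi_k\in\mathbb{E}$ of \eqref{lin} with $\|\phi_k\|_{*}\geq\delta>0$. After normalizing we may assume $\|\phi_k\|_{*}=1$, so that $\|f_k\|_{**}\to 0$ while $\|\phi_k\|_{*}=1$. The first task is to control the Lagrange multipliers: multiplying \eqref{lin} by $\overline{\mathbb{Z}}_{\ell j}$ (resp.\ $\underline{\mathbb{Z}}_{\ell j}$), integrating, and using the near-orthogonality relations together with the fact that $\phi_k\in\mathbb{E}$, one obtains a nearly diagonal linear system for the $c_\ell$ whose right-hand side is controlled by $\|f_k\|_{**}$ and by $\|\phi_k\|_{*}$ times the small interaction terms coming from $K(|y|/{\bf r})W_{r,h,\Lambda}^{2^*-2}$. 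This yields $|c_\ell|\le C\|f_k\|_{**}+o(1)\|\phi_k\|_{*}=o(1)$; here one uses $m$ in the range \eqref{assumptionform} and the definition of ${\mathscr S}_k$ to guarantee the error estimates are genuinely $o(1)$.

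Next I would establish the pointwise a priori estimate that upgrades the equation into control of $\|\phi_k\|_{*}$ in terms of the other quantities. Writing the equation as $-\Delta\phi_k = (2^*-1)K(|y|/{\bf r})W_{r,h,\Lambda}^{2^*-2}\phi_k + f_k + \sum_{i,\ell}(c_\ell U_{\overline{x}_i,\Lambda}^{2^*-2}\overline{\mathbb{Z}}_{\ell i}+c_\ell U_{\underline{x}_i,\Lambda}^{2^*-2}\underline{\mathbb{Z}}_{\ell i})$, one represents $\phi_k$ via the Newtonian potential and estimates each term against the weight $\sum_j[(1+|y-\overline{x}_j|)^{-\frac{N-2}{2}-\tau}+(1+|y-\underline{x}_j|)^{-\frac{N-2}{2}-\tau}]$. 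The key computational input is that convolution of $|y|^{2-N}$ with the $(**)$-type weight (exponent $\frac{N+2}{2}+\tau$) produces the $(*)$-type weight (exponent $\frac{N-2}{2}+\tau$), up to a constant; the potential term with $W^{2^*-2}$ contributes a factor that is bounded by $(\text{small}) \cdot\|\phi_k\|_{*}$ away from the bubble centers and must be absorbed; and the $c_\ell$-terms contribute $O(\sum|c_\ell|)$ in the $(*)$-norm. One then arrives at an estimate of the form
\begin{equation*}
\|\phi_k\|_{*}\ \le\ C\Big(\|f_k\|_{**}+\sum_{\ell}|c_\ell|+ \big\|\chi\,W_{r,h,\Lambda}^{2^*-2}\phi_k\big\|_{**}\Big),
\end{equation*}
where $\chi$ cuts off near the centers and the last term is the dangerous one.

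To close the argument I would perform the blow-up analysis near a single bubble: since $\|\phi_k\|_{*}=1$, pick $j_k$ where the weighted sup is nearly attained, translate and rescale $\tilde\phi_k(y):=\Lambda^{-\frac{N-2}{2}}\phi_k(x_{j_k}+y/\Lambda)$ around that center, and pass to the limit. By elliptic estimates $\tilde\phi_k$ converges locally uniformly to a bounded solution $\phi_\infty$ of the limit equation $\Delta\phi_\infty+(2^*-1)U^{2^*-2}\phi_\infty=0$ on $\R^N$; the decay built into the $(*)$-norm gives $\phi_\infty\in D^{1,2}(\R^N)$, so by the nondegeneracy result of \cite{Re} quoted above, $\phi_\infty$ is a linear combination of $Z_1,\dots,Z_{N+1}$. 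But the orthogonality conditions defining $\mathbb{E}$ pass to the limit and force $\phi_\infty=0$. This says the weighted mass of $\phi_k$ escapes from every bounded neighborhood of the centers; feeding this back into the pointwise estimate above shows the right-hand side is $o(1)$, contradicting $\|\phi_k\|_{*}=1$. The main obstacle is the third ingredient --- showing the interaction/self-potential term $\|\chi\,W_{r,h,\Lambda}^{2^*-2}\phi_k\|_{**}$ is genuinely negligible. This requires carefully summing the bubble interactions over the $2k$ centers on the two circles (which are at mutual distance $O({\bf r}/k)$ along each circle and $O({\bf r}h)$ between circles, with ${\bf r}\sim k^{\frac{N-2}{N-2-m}}$, $h\sim k^{-\frac{N-3}{N-1}}$), and it is precisely here that the restriction on $m$ in \eqref{assumptionform} and the choice of $\tau$ in \eqref{tau and epsilon} are used to make the geometric series converge with room to spare.
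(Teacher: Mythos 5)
Your proposal is correct and follows essentially the same strategy as the paper's proof: contradiction with the normalization $\|\phi_k\|_*=1$, Green's representation estimated term by term against the weighted norms (with the $\sigma$-gain from the potential term), estimation of the multipliers $c_\ell$ by testing against the $\overline{\mathbb{Z}}_{qj}$, and a blow-up around a center where the weighted sup concentrates, concluding via the nondegeneracy of the Aubin--Talenti bubble and the orthogonality conditions passing to the limit. The only cosmetic difference is the order of steps (you estimate $c_\ell$ before the pointwise bound, the paper after), which does not change the argument.
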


The norms $\| \cdot \|_*$ and $\| \cdot \|_{**}$ are defined respectively in \eqref{star} and \eqref{starstar}.

\medskip

 \begin{proof}
 We prove the Lemma  by contradiction. Suppose that there exists a sequence of  $(r_k, h_k, \Lambda_k)\in {{\mathscr S}_k}$, and for $\phi_k$ satisfies \eqref{lin} with  $f=f_k, r= r_k, h= h_k,  \Lambda= \Lambda_k$, with  $\Vert f _{k}\Vert_{**}\to 0 $, and  $\|\phi_k\|_*\ge c'>0$. Without loss of generality, we can assume that  $\|\phi_k\|_*=1 $. For convenience, we drop the subscript  $k $.

From \eqref{lin}, we know that
\begin{equation*}\begin{split}
\phi(y)
\,=\,&(2^*-1)\int_{\R^N}\frac1{|z-y|^{N-2}}K\Big(\frac{|z|}{{\bf r}}\Big)
W_{r,h,\Lambda}^{2^*-2}\phi(z)\,{\mathrm d}z
+\int_{\R^N}\frac1{|z-y|^{N-2}}\,  f(z) {\mathrm d}z
\\[2mm]
&+\int_{\R^N}\frac1{|z-y|^{N-2}}\, \sum_{j=1}^k\sum_{\ell=1}^3
\Big(\, {c_\ell}U_{\overline{x}_j,\Lambda}^{2^*-2}\overline{\mathbb{Z}}_{\ell j}+{c_\ell}U_{\underline{x}_j,\Lambda}^{2^*-2}\underline{\mathbb{Z}}_{\ell j}\,\Big) {\mathrm d}z
\\[2mm]
:=\,&M_1\,+\,M_2\,+\,M_3.
\end{split}
\end{equation*}
For the first term  $M_1 $, we make use of Lemma \ref{laa3}, so that
\begin{align*}
M_1 &\leq C\|{\phi}\|_* \, \int_{\R^N}\frac{K\big(\frac{|z|}{{\bf r}}\big)}{|z-y|^{N-2}}
W_{r,h,\Lambda}^{2^*-2} \Big(\sum_{j=1}^k \Big[\frac1{(1+|z-\overline{x}_j|)^{\frac{N-2}{2}+\tau
}}+\frac1{(1+|z-\underline{x}_j|)^{\frac{N-2}{2}+\tau}}\Big]\Big) \, {\mathrm d}z
\\[2mm]
&\leq C\|{\phi}\|_* \, \sum_{j=1}^k \Big[\frac1{(1+|z-\overline{x}_j|)^{\frac{N-2}{2}+\tau+\sigma}}+\frac1{(1+|z-\underline{x}_j|)^{\frac{N-2}{2}+\tau+\sigma}}\Big]\nonumber.
\end{align*}

For the second term  $M_2 $, we make use of Lemma \ref{lemb2}, so that
\begin{align*}
M_2& \le C\|f\|_{**}\int_{\R^N}\frac1{|z-y|^{N-2}}\sum_{j=1}^k \Big[\frac1{(1+|z-\overline{x}_j|)^{\frac{N+2}2+\tau}}+\frac1{(1+|z-\underline{x}_j|)^{\frac{N+2}2+\tau}}\Big]\,{\mathrm d}z
\\[2mm]
& \le C\|f\|_{**} \sum_{j=1}^k \Big[\frac1{(1+|y-\overline{x}_j|)^{\frac{N-2}{2}+\tau}}+\frac1{(1+|y-\underline{x}_j|)^{\frac{N-2}2+\tau}}\Big] \nonumber.
\end{align*}

In order to estimate the term  $M_3 $, we will first give the estimates of  $ \overline{\mathbb{Z}}_{1j} $ and  $ \underline{\mathbb{Z}}_{1j} $
\begin{equation}\label{estimatekernel}
\begin{split}
|\overline{\mathbb{Z}}_{1j}|\le  \frac{C}{(1+|y-\overline{x}_j|)^{N-2}},
\quad|\overline{\mathbb{Z}}_{2j}|\le  \frac{C r}{(1+|y-\overline{x}_j|)^{N-2}},
\quad|\overline{\mathbb{Z}}_{3j}|\leq  \frac{C}{(1+|y-\overline{x}_j|)^{N-2}},
 \\[2mm]
|\underline{\mathbb{Z}}_{1j}|\le  \frac{C}{(1+|y-\underline{x}_j|)^{N-2}},
 \quad|\underline{\mathbb{Z}}_{2j}|\le  \frac{C r}{(1+|y-\underline{x}_j|)^{N-2}},
 \quad|\underline{\mathbb{Z}}_{3j}|\leq  \frac{C}{(1+|y-\underline{x}_j|)^{N-2}}.
\end{split}
\end{equation}
Combining estimates \eqref{estimatekernel} and Lemma \ref{lemb2},  we have
\begin{align*}
\sum_{j=1}^k\, \int_{\R^N}\frac1{|z-y|^{N-2}}\, U_{\overline{x}_j,\Lambda}^{2^*-2}\overline{\mathbb{Z}}_{\ell j} \, {\mathrm d}z
 & \le C \sum_{j=1}^k\, \int_{\R^N}\frac1{|z-y|^{N-2}}\, \frac{(1+r \delta_{\ell 2})}{(1+|z-\overline{x}_j|)^{N+2}} \, {\mathrm d}z
\\[2mm]
& \le C\sum_{j=1}^k\, \frac{(1+r\delta_{\ell 2})}{(1+|y-\overline{x}_j|)^{\frac{N-2}{2}+\tau}}, \quad \text{for} ~\ell= 1, 2, 3,\nonumber
\end{align*}
where $\delta_{\ell 2} =0$ if $\ell \not= 2$, $\delta_{\ell 2}=1$ if $\ell =2$.
 Similarly, we have
\begin{align*}
 \sum_{j=1}^k\, \int_{\R^N}\frac1{|z-y|^{N-2}}\,
  U_{\underline{x}_j,\Lambda}^{2^*-2}\underline{\mathbb{Z}}_{\ell j}\,{\mathrm d}z
 \le C  \sum_{j=1}^k\,  \frac{(1+r \, \delta_{\ell 2})}{(1+|y-\underline{x}_j|)^{\frac{N-2}{2}+\tau}}, \quad \text{for} ~\ell= 1, 2, 3.
\end{align*}

\medskip
 Next, we will give the estimates of  ${c_\ell}, \ell= 1,2,3 $. Multiply both sides of \eqref{lin} by  $\overline{\mathbb{Z}}_{q 1}, q=1,2,3 $, then we obtain that
 \begin{equation}\label{equationofcl}
 \begin{split}
 & \int_{\R^N}  \Big[-\Delta {\phi}-(2^*-1)K\Big(\frac{|y|}{{\bf r}}\Big)
W_{r,h,\Lambda}^{2^*-2}{\phi}\Big] \overline{\mathbb{Z}}_{q 1}
\\[2mm]
&= \int_{\R^N} f \,\overline{\mathbb{Z}}_{q 1}+\sum_{j=1}^k\sum_{\ell=1}^3  \int_{\R^N}  \Big(\, {c_\ell}U_{\overline{x}_j,\Lambda}^{2^*-2}\overline{\mathbb{Z}}_{\ell j}+{c_\ell}U_{\underline{x}_j,\Lambda}^{2^*-2}\underline{\mathbb{Z}}_{\ell j}\,\Big)\,\overline{\mathbb{Z}}_{q 1}.
 \end{split}
 \end{equation}
 Using Lemma \ref{lemb1}, we can get
\begin{align*}
\int_{\R^N} f \, \overline{\mathbb{Z}}_{q 1}
& \le C\|f\|_{**}\,\sum_{j=1}^k  \int_{\R^N}  \frac{1+r \, \delta_{\ell 2}}{(1+|y-\overline{x}_1|)^{N-2}} \Big[\frac1{(1+|y-\overline{x}_j|)^{\frac{N+2}2+\tau}}
+\frac1{(1+|y-\underline{x}_j|)^{\frac{N+2}2+\tau}}\Big]\,  \nonumber
\\[2mm]
& \le C(1+r \, \delta_{\ell 2}) \|f_k\|_{**}.
\end{align*}

 The discussion on the left side of \eqref{equationofcl} may be more tricky, in fact, we have
\begin{align*}
 & \int_{\R^N}  \Big[-\Delta {\phi}-(2^*-1)K\Big(\frac{|y|}{{\bf r}}\Big)
W_{r,h,\Lambda}^{2^*-2}{\phi}\Big] \overline{\mathbb{Z}}_{q 1} \nonumber
\\[2mm]
&= \int_{\R^N} \Big[-\Delta \overline{\mathbb{Z}}_{q 1} -(2^*-1)K\Big(\frac{|y|}{{\bf r}}\Big)
W_{r,h,\Lambda}^{2^*-2} \overline{\mathbb{Z}}_{q 1}\Big] {\phi}  \nonumber
\\[2mm]
&=(2^*-1) \int_{\R^N} \Big[1-K\Big(\frac{|y|}{{\bf r}}\Big)\Big] W_{r,h,\Lambda}^{2^*-2} \overline{\mathbb{Z}}_{q 1}{\phi}+\Big(U_{\overline{x}_1,\Lambda}^{2^*-2}-W_{r,h,\Lambda}^{2^*-2}\Big)  \overline{\mathbb{Z}}_{q 1}{\phi}
\\[2mm]
&:=J_1+J_2 \nonumber .
\end{align*}
Using the property of  $ K(s)  $, similar to the proof of Lemma \ref{laa3}, we can get
\begin{align*}
J_1\le& C\|{\phi}\|_{*}  \int_{\R^N}\Big|1-K\Big(\frac{|y|}{{\bf r}}\Big) \Big| \, W_{r,h,\Lambda}^{2^*-2}  \overline{\mathbb{Z}}_{q 1}  \sum_{j=1}^k \Big[\frac1{(1+|y-\overline{x}_j|)^{\frac{N-2}{2}+\tau}}+\frac1{(1+|y-\underline{x}_j|)^{\frac{N-2}{2}+\tau}}\Big]
\\[2mm]
= & C\|{\phi}\|_{*} \int\limits_{||y|-{\bf r}|\le \sqrt{\bf r}}\Big|1-K\Big(\frac{|y|}{{\bf r}}\Big)\Big| \, W_{r,h,\Lambda}^{2^*-2} \overline{\mathbb{Z}}_{q 1}  \sum_{j=1}^k \Big[\frac1{(1+|y-\overline{x}_j|)^{\frac{N-2}{2}+\tau}}+\frac1{(1+|y-\underline{x}_j|)^{\frac{N-2}{2}+\tau}}\Big]
\\[2mm]
&+C\|{\phi}\|_{*} \int\limits_{||y|-{\bf r}|\ge \sqrt{\bf r}}\Big|1-K\Big(\frac{|y|}{{\bf r}}\Big)\Big| \, W_{r,h,\Lambda}^{2^*-2} \overline{\mathbb{Z}}_{q 1}  \sum_{j=1}^k \Big[\frac1{(1+|y-\overline{x}_j|)^{\frac{N-2}{2}+\tau}}+\frac1{(1+|y-\underline{x}_j|)^{\frac{N-2}{2}+\tau}}\Big]
\\[2mm]
\le & \frac{C}{\sqrt{\bf r}}\int_{\R^N}
W_{r, h, \Lambda}^{2^*-2}(y) \frac{1+r\, \delta_{\ell 2}}{(1+|y-\overline{x}_1|)^{N-2}}\sum_{j=1}^k \frac1{(1+|y-\underline{x}_j|)^{\frac{N-2}{2}+\tau}}\,
\\[2mm]
 &+\frac{C}{{\bf r}^\sigma}\int_{\R^N}
W_{r,h,\Lambda}^{2^*-2}(y) \frac{1+r\, \delta_{\ell 2}}{(1+|y-\overline{x}_1|)^{N-2}}\sum_{j=1}^k \frac1{(1+|y-\overline{x}_j|)^{\frac{N-2}{2}+\tau-2\sigma}}\,
\leq\frac{C}{{\bf r}^\sigma}(1+r\, \delta_{\ell 2}).
\end{align*}

For $J_2$, it is easy to derive that
\begin{align*}
J_2
\,\le\,& \int_{\R^N} \Big|U_{\overline{x}_1,\Lambda}^{2^*-2}-W_{r,h,\Lambda}^{2^*-2} \Big|\frac{1+r \, \delta_{\ell 2}}{(1+|y-\overline{x}_1|)^{N-2}}
\\[2mm]
& \quad \quad \times  \sum_{j=1}^k \Big[\frac1{(1+|y-\overline{x}_j|)^{\frac{N-2}{2}+\tau}}+\frac1{(1+|y-\underline{x}_j|)^{\frac{N-2}{2}+\tau}}\Big]
\\[2mm]
\,\le\,& \frac{C}{{\bf r}^\sigma}(1+r\, \delta_{\ell 2}).
\end{align*}
Then, we get
\begin{align*}
\int_{\R^N}  \Big[-\Delta {\phi}-(2^*-1)K\Big(\frac{|y|}{{\bf r}}\Big)
W_{r,h,\Lambda}^{2^*-2}\,{\phi}\Big] \overline{\mathbb{Z}}_{q 1} \le \frac{C}{{\bf r}^\sigma}(1+r \, \delta_{\ell 2}) \, \|{\phi}\|_{*}.
\end{align*}
On the other hand, there holds
\begin{align*}
\sum_{j=1}^k \int_{\R^N} \big(\, U_{\overline{x}_j,\Lambda}^{2^*-2}\overline{\mathbb{Z}}_{\ell j}+U_{\underline{x}_j,\Lambda}^{2^*-2}\underline{\mathbb{Z}}_{\ell j}\,\Big)\,\overline{\mathbb{Z}}_{q 1}
= \bar{c}_{\ell}\delta_{\ell q} (1+\delta_{q2} r^2)+o(1), \quad {\mbox {as}} \quad k \to \infty.
\end{align*}
 Note that
 \begin{eqnarray*}
\int_{\R^N}\,U_{\overline{x}_1,\Lambda}^{2^*-2}\overline{\mathbb{Z}}_{\ell 1}  \overline{\mathbb{Z}}_{q 1}=\left\{\begin{array}{rcl}  0,\qquad \text{if}\quad \ell \neq q,
&&\\[2mm]
\space\\[2mm]
 \bar{c}_q (1+\delta_{q2} r^2),\qquad \text{if} \quad\ell=q, \end{array}\right.
\end{eqnarray*}
 for some constant  $\bar{c}_q >0 $.
 Then we can get
 \begin{align} \label{estimate cl}
c_{\ell}=  \frac{1+r\delta_{\ell2}}{1+\delta_{\ell2} r^2} O\Bigl(\frac 1{{\bf r}^\sigma}\|{\phi}\|_{*}+\|f\|_{**} \Bigr)= o(1), \quad {\mbox {as}} \quad k \to \infty.
\end{align}
Then we have
\begin{equation}
\label{g8}
\begin{split}
|{\phi}| \le \Bigl(&\,\|f\|_{**}  \sum_{j=1}^k \Big[\frac1{(1+|y-\overline{x}_j|)^{\frac{N-2}{2}+\tau}}+\frac1{(1+|y-\underline{x}_j|)^{\frac{N-2}{2}+\tau}}\Big]
\\[2mm]
&+{\sum_{j=1}^k \Big[\frac1{(1+|y-\overline{x}_j|)^{\frac{N-2}{2}+\tau+\sigma}}+\frac1{(1+|y-\underline{x}_j|)^{\frac{N-2}{2}+\tau+\sigma}}\Big]}\, \Bigr).
\end{split}
\end{equation}
Combining this fact and  $\|{\phi}\|_{*}= 1 $, we have the following claim: \\[2mm]
 \textbf{Claim 1}:  There exist some positive constants  $ \bar{R}, \delta_1 $ such that
\begin{align} \label{bound1}
\|\phi\|_{L^\infty(B_{\bar{R}}(\overline{x}_l))}  \ge \delta_1> 0,
 \end{align}
for some  $ l\in \{1,2, \cdots, k\}  $.

\medskip
Since  $ \phi\in H_s $, we assume that  $l=1 $. By using local elliptic estimates and \eqref{g8}, we can get, up to subsequence,  $\tilde \phi(y)
= \phi(y-\overline{x}_1)  $  converge uniformly in
any compact set to a solution
 \begin{equation*}
-\Delta u-(2^*-1) U_{0,\Lambda}^{2^*-2} u=0,\quad \text{in}\;\R^N,
\end{equation*}
for some  $\Lambda\in [L_1,L_2] $.  Since  $ \phi  $ is even in  $y_d, d= 2, 4, \cdots,N $, we know that  $ u $ is also even in  $y_d, d= 2, 4, \cdots,N $.
Then we know that  $u $ must be a linear combination of the functions
\begin{equation*}
\frac{\partial U_{0,\Lambda}}{\partial y_1}, \qquad  \frac{\partial U_{0,\Lambda}}{\partial y_3}, \qquad y \cdot {\nabla U_{0,\Lambda}}+(N-2)  U_{0,\Lambda}.
\end{equation*}
From the assumptions
\[
\int_{{\mathbb{R}}^N} U_{\overline{x}_1, \Lambda}^{2^*-2}  \overline{\mathbb{Z}}_{\ell 1}\, \tilde \phi = 0 \quad \text{for} ~ \ell= 1,2,3,
\] we can get
\begin{align*}
 \sqrt{1-h^2} \, \int_{{\mathbb{R}}^N} U_{0, \Lambda}^{2^*-2}\,\frac{\partial U_{0,\Lambda}}{\partial y_1}\,\tilde \phi+h \, \int_{{\mathbb{R}}^N} U_{0, \Lambda}^{2^*-2}\,\frac{\partial U_{0,\Lambda}}{\partial y_3}\,\tilde \phi= 0,
 \\[2mm]
  \sqrt{1-h^2} \, \int_{{\mathbb{R}}^N} U_{0, \Lambda}^{2^*-2}\,\frac{\partial U_{0,\Lambda}}{\partial y_1}\,\tilde \phi+h \, \int_{{\mathbb{R}}^N} U_{0, \Lambda}^{2^*-2}\,\frac{\partial U_{0,\Lambda}}{\partial y_3}\,\tilde \phi= 0,
\end{align*}
and  \[ \int_{{\mathbb{R}}^N} U_{0, \Lambda}^{2^*-2}\,\Big[y \cdot {\nabla U_{0,\Lambda}}+(N-2)  U_{0,\Lambda} \Big]  \tilde \phi =0.
\]
By taking limit, we have
\begin{align*}
\int_{{\mathbb{R}}^N} U_{0, \Lambda}^{2^*-2}\,\frac{\partial U_{0,\Lambda}}{\partial y_1}\,u= \int_{{\mathbb{R}}^N} U_{0, \Lambda}^{2^*-2}\,\frac{\partial U_{0,\Lambda}}{\partial y_3}\,u= \int_{{\mathbb{R}}^N} U_{0, \Lambda}^{2^*-2}\,\Big[y \cdot {\nabla U_{0,\Lambda}}+(N-2)  U_{0,\Lambda} \Big]  u =0.
\end{align*}
So we have  $u=0 $.  This is a contradiction to \eqref{bound1}.
\end{proof}

\medskip
For the linearized problem \eqref{lin}, we have the following existence, uniqueness results. Furthermore, we can give the estimates of  $ \phi $ and  $c_\ell, \ell=1,2,3 $.

\medskip
\begin{proposition}\label{p1}
There exist  $k_0>0  $ and a constant  $C>0 $ such
that for all  $k\ge k_0 $ and all  $f\in L^{\infty}(\R^N) $, problem
 $(\ref{lin}) $ has a unique solution  $\phi\equiv {\bf L}_k(f) $. Besides,
\begin{equation}\label{Le}
\Vert \phi\Vert_*\leq C\|f\|_{**},\qquad
|c_{\ell}|\leq  \frac{C} {1+\delta_{\ell 2} r}\|f\|_{**}, \quad \ell=1,2,3.
\end{equation}
\end{proposition}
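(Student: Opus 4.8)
The plan is to derive Proposition \ref{p1} from the a priori estimate already contained in Lemma \ref{lem2.1} together with a standard Fredholm argument carried out in a Hilbert space.

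First I would upgrade Lemma \ref{lem2.1} to the quantitative statement that there are $k_0$ and $C>0$ with $\|\phi\|_* \le C\|f\|_{**}$ for every $k\ge k_0$ and every solution $\phi\in\mathbb{E}$ of \eqref{lin}. If this failed one could find $(r_k,h_k,\Lambda_k)\in{\mathscr S}_k$ and solutions $\phi_k\in\mathbb{E}$ of \eqref{lin} with data $f_k$ such that $\|\phi_k\|_*=1$ while $\|f_k\|_{**}\le 1/k\to 0$, contradicting Lemma \ref{lem2.1}. Taking $f\equiv 0$ this also gives uniqueness.

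Next, for existence I would reformulate \eqref{lin} in $\mathbb{E}$ endowed with the inner product $\langle u,v\rangle=\int_{\R^N}\nabla u\cdot\nabla v$. Applying $(-\Delta)^{-1}$ to \eqref{lin} and the orthogonal projection $P:H_s\to\mathbb{E}$, the multiplier term disappears: indeed, writing $g_k=\sum_{\ell,i}c_\ell\bigl(U_{\overline{x}_i,\Lambda}^{2^*-2}\overline{\mathbb Z}_{\ell i}+U_{\underline{x}_i,\Lambda}^{2^*-2}\underline{\mathbb Z}_{\ell i}\bigr)$, for every $v\in\mathbb{E}$ one has $\langle(-\Delta)^{-1}g_k,v\rangle=\int_{\R^N}g_k v=0$ by the orthogonality conditions defining $\mathbb{E}$ in \eqref{SpaceE}, so $(-\Delta)^{-1}g_k\in\mathbb{E}^\perp$. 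Hence \eqref{lin} is equivalent to $(\mathrm{Id}-T)\phi=P(-\Delta)^{-1}f$ in $\mathbb{E}$, where $T\phi:=(2^*-1)\,P(-\Delta)^{-1}\!\bigl(K(\tfrac{|\cdot|}{{\bf r}})W_{r,h,\Lambda}^{2^*-2}\phi\bigr)$. Since $W_{r,h,\Lambda}^{2^*-2}\in L^{N/2}(\R^N)$, $T$ is a compact self-adjoint operator on $\mathbb{E}$, so by the Fredholm alternative $\mathrm{Id}-T$ is invertible once it is injective, which is guaranteed by Step 1 for $k\ge k_0$. This produces a unique $\phi\in\mathbb{E}$; the residual $\phi-(2^*-1)(-\Delta)^{-1}(K W_{r,h,\Lambda}^{2^*-2}\phi)-(-\Delta)^{-1}f$ lies in $\mathbb{E}^\perp$ and therefore equals $(-\Delta)^{-1}g_k$ for a unique choice of scalars $c_\ell$ — here I would invoke, exactly as in the proof of Lemma \ref{lem2.1}, that the linear system for the $c_\ell$ has matrix $\mathrm{diag}\bigl(\bar c_\ell(1+\delta_{\ell 2}r^2)\bigr)+o(1)$, hence invertible for $k$ large. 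Thus $(\phi,c_1,c_2,c_3)$ solves \eqref{lin}.

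There is one technical point to be handled: the solution just obtained lies a priori only in $D^{1,2}(\R^N)$, so one must still check $\|\phi\|_*<\infty$ before applying Step 1. For this I would use the integral representation of $\phi$ displayed in the proof of Lemma \ref{lem2.1}, together with Lemmas \ref{laa3}, \ref{lemb1}, \ref{lemb2}, and run a contraction/bootstrap argument in the $\|\cdot\|_*$ norm; once finiteness is known, Step 1 yields $\|\phi\|_*\le C\|f\|_{**}$. Finally, the bound on the multipliers is precisely estimate \eqref{estimate cl} obtained in the proof of Lemma \ref{lem2.1}: pairing \eqref{lin} with $\overline{\mathbb Z}_{q1}$ gives $c_q=\tfrac{1+r\delta_{q2}}{1+\delta_{q2}r^2}O\bigl({\bf r}^{-\sigma}\|\phi\|_*+\|f\|_{**}\bigr)$, and inserting $\|\phi\|_*\le C\|f\|_{**}$ and $\tfrac{1+r\delta_{q2}}{1+\delta_{q2}r^2}\le\tfrac{C}{1+\delta_{q2}r}$ gives $|c_\ell|\le\tfrac{C}{1+\delta_{\ell 2}r}\|f\|_{**}$. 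The only genuinely delicate ingredient is the a priori bound, i.e. Lemma \ref{lem2.1}, which may be assumed; the rest is the standard linear Lyapunov--Schmidt machinery, so I do not expect a serious obstacle here.
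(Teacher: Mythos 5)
Your proposal is correct and follows essentially the same route as the paper: rewrite \eqref{lin} as $(\mathbb I-\mathbb T_k)\phi=\tilde f$ in $\mathbb E$ via Riesz representation, apply the Fredholm alternative with injectivity supplied by Lemma \ref{lem2.1} applied to the homogeneous problem, and obtain \eqref{Le} from the a priori bound together with \eqref{estimate cl}. The extra details you supply (why the projection kills the multiplier term, recovering the $c_\ell$ from the residual in $\mathbb E^\perp$, and the finiteness of $\|\phi\|_*$) are exactly the points the paper compresses into ``a standard method,'' so there is no substantive difference.
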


\begin{proof}
Recall  the definition of  $\mathbb{E}$ as in  \eqref{SpaceE}, we can rewrite problem \eqref{lin} in the form
\begin{equation} \label{Lin1}
-\Delta {\phi}=f+(2^*-1)K\Big(\frac{|y|}{{\bf r}}\Big)
W_{r,h,\Lambda}^{2^*-2} \phi \quad \text{for all} ~ \phi \in \mathbb{E},
\end{equation}
in the sense of distribution. Furthermore, by using Riesz's representation theorem,  equation \eqref{Lin1} can be rewritten in the operational form
\begin{equation}\label{lin1}
(\mathbb I- \mathbb{T}_k) \phi = \tilde{f}, \quad  \text{in} ~  \mathbb{E},
\end{equation}
where $\mathbb I$ is identity operator and $\mathbb{T}_k$ is a compact operator.
Fredholm's alternative yields that  problem \eqref{lin1} is uniquely solvable for any  $ \tilde f $ when the homogeneous equation
\begin{align}\label{homo}
(\mathbb I-\mathbb{T}_k) \phi = 0, \quad  \text{in} ~  \mathbb{E},
\end{align}
has only the trivial solution. Moreover, problem \eqref{homo} can be rewritten as following
\begin{equation}\label{lin0}
\begin{cases}
-\Delta {\phi}-(2^*-1)K\Big(\frac{|y|}{{\bf r}}\Big)
W_{r,h,\Lambda}^{2^*-2}\phi
= \sum\limits_{i=1}^k\sum\limits_{\ell=1}^3 \Big({c}_\ell U_{\overline{x}_i,\Lambda}^{2^*-2}\overline{\mathbb{Z}}_{\ell i}+{c}_\ell U_{\underline{x}_i,\Lambda}^{2^*-2}\underline{\mathbb{Z}}_{\ell i}\Big)\;\;\text{in}\;\R^N,
\\[2mm]
\phi\in \mathbb{E}.
\end{cases}
\end{equation}
 Suppose that \eqref{lin0} has nontrivial solution $\phi_k$ and satisfies $\| \phi_k\|_{*}=1$.  From Lemma \ref{lem2.1}, we know $\| \phi_k\|_{*}$ tends to zero as $ k\to+\infty$, which is a contradiction. Thus problem \eqref{homo} (or \eqref{lin0}) only has trivial solution. So we can get unique solvability for problem \eqref{lin}. Using Lemma \ref{lem2.1}, the estimates \eqref{Le} can be proved by a standard method.
\end{proof}

%

%
\medskip
We can rewrite problem \eqref{re} as following
\begin{equation}\label{re1}
\begin{cases}
-\Delta \phi
-(2^*-1)K\Big(\frac{|y|}{{\bf r}}\Big)
W_{r,h,\Lambda}^{2^*-2}\phi= {{\bf N}}(\phi)+{\bf l}_k
\\[2mm]
\qquad \qquad \qquad \qquad+\sum\limits_{j=1}^k\sum\limits_{\ell=1}^3
\Big(\, {c_\ell}U_{\overline{x}_j,\Lambda}^{2^*-2}\overline{\mathbb{Z}}_{\ell j}+{c_\ell}U_{\underline{x}_j,\Lambda}^{2^*-2}\underline{\mathbb{Z}}_{\ell j}\,\Big)\, \; \text{in}\;
\R^N,\\[2mm]
\phi \in  \mathbb{E},
\end{cases}
\end{equation}
where
\begin{equation*}
{{\bf N}}(\phi)=K\Big(\frac{|y|}{{\bf r}}\Big)\Big[\bigl(W_{r,h,\Lambda}+\phi \bigr)^{2^*-1}-W_{r,h,\Lambda}^{2^*-1}-(2^*-1)W_{r,h,\Lambda}^{2^*-2}\phi\Big],
\end{equation*}
and
\begin{equation*}
 {\bf l}_k=K\Big(\frac{|y|}{{\bf r}}\Big) W_{r,h,\Lambda}^{2^*-1}-\sum_{j=1}^k \Big(\, U_{\overline{x}_j,\Lambda}^{2^*-1}+U_{\underline{x}_j,\Lambda}^{2^*-1} \,\Big).
\end{equation*}
Next, we will use the Contraction Mapping Principle to show that problem \eqref{re1} has a unique solution in the set that  $\|\phi\|_* $ is small enough. Before that, we will give the estimate   of  ${{\bf N}}(\phi) $ and  $ {\bf l}_k $.

\medskip
\begin{lemma} \label{Lemma2.3}
Suppose  $ N\ge 5  $. There exists $C>0$ such that
\[
\|{\bf N}(\phi)\|_{**}\le C\|\phi\|_*^{\min\{2^*-1, 2\}},
\]
for all $\phi \in \mathbb{E}$.
\end{lemma}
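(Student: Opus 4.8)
The plan is to combine an elementary pointwise inequality for the power nonlinearity with the ``power of a sum of bubbles'' estimates already used in this section (Lemma \ref{laa3} and Lemmas \ref{lemb1}--\ref{lemb2}). First, recall the algebraic inequality: there is $C>0$ so that for all $a\ge 0$ and $b\in\R$,
\[
\bigl|(a+b)^{2^*-1}-a^{2^*-1}-(2^*-1)a^{2^*-2}b\bigr|\le
\begin{cases}
C\,|b|^{2^*-1}, & N\ge 6,\\[1mm]
C\bigl(|b|^{2^*-1}+a^{2^*-3}b^{2}\bigr), & N=5,
\end{cases}
\]
the first case using $1<2^*-1\le 2$, and the second a Taylor expansion together with the subadditivity of $t\mapsto t^{2^*-3}$ (note $2^*-3=\tfrac13>0$ when $N=5$). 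Since $0\le K(|y|/{\bf r})\le C$, this gives pointwise $|{\bf N}(\phi)(y)|\le C|\phi(y)|^{2^*-1}$ when $N\ge 6$, and $|{\bf N}(\phi)(y)|\le C\bigl(|\phi(y)|^{2^*-1}+W_{r,h,\Lambda}(y)^{2^*-3}\phi(y)^2\bigr)$ when $N=5$.

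Next I would insert the bound $|\phi(y)|\le\|\phi\|_*\,\mathcal V(y)$ with $\mathcal V(y)=\sum_{j=1}^k\bigl[(1+|y-\overline{x}_j|)^{-(\frac{N-2}{2}+\tau)}+(1+|y-\underline{x}_j|)^{-(\frac{N-2}{2}+\tau)}\bigr]$, which is just \eqref{star}, and $W_{r,h,\Lambda}(y)\le C\sum_{j}\bigl[(1+|y-\overline{x}_j|)^{-(N-2)}+(1+|y-\underline{x}_j|)^{-(N-2)}\bigr]$, thereby reducing the claim to the pointwise weight bounds
\[
\mathcal V(y)^{2^*-1}\le C\,\mathcal W_{**}(y),\qquad\text{and for }N=5,\quad W_{r,h,\Lambda}(y)^{2^*-3}\mathcal V(y)^2\le C\,\mathcal W_{**}(y),
\]
where $\mathcal W_{**}$ is the weight in \eqref{starstar}. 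Granting these, one obtains $\|{\bf N}(\phi)\|_{**}\le C(\|\phi\|_*^{2^*-1}+\|\phi\|_*^2)\le C\|\phi\|_*^{\min\{2^*-1,2\}}$ (the $N=5$ case using that $\|\phi\|_*$ stays bounded in the regime where the lemma is applied).

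The core of the argument, and the step I expect to be the main obstacle, is the two weight inequalities, since $(\sum_j a_j)^{2^*-1}$ with $2^*-1>1$ cannot be estimated by $\sum_j a_j^{2^*-1}$. As in Lemma \ref{laa3}, one fixes a region $\Omega_l^{\pm}$, singles out the nearest center, and controls the remaining $2k-1$ terms using that, for $(r,h,\Lambda)\in{\mathscr S}_k$, the centers $\{\overline{x}_j,\underline{x}_j\}$ are mutually separated by distances tending to $+\infty$ (neighbouring angular centers by $\sim{\bf r}/k=k^{m/(N-2-m)}$, and $\overline{x}_j$ from $\underline{x}_j$ by $\sim 2\,{\bf r}\,{\bf h}$). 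Near the dominant center the comparison rests on the exponent gap $(\tfrac{N-2}{2}+\tau)(2^*-1)=\tfrac{N+2}{2}+\tfrac{N+2}{N-2}\tau>\tfrac{N+2}{2}+\tau$, and for the quadratic term on $(2^*-3)(N-2)+2(\tfrac{N-2}{2}+\tau)>\tfrac{N+2}{2}+\tau$ for $N=5$; far from every center the same exponent gap forces the left side to decay faster than $\mathcal W_{**}$. The admissible range \eqref{assumptionform} of $m$ and the choice of $\tau$ in \eqref{tau and epsilon} (with $\epsilon_1$ small) are precisely what make the cross terms and the tail sums over the remaining centers absorbable; this is where care is required and is the only non-routine point of the proof.
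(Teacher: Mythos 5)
Your argument is correct and is essentially the proof the paper defers to (Lemma 2.4 of \cite{WY-10-JFA}): the elementary Taylor-type inequality for the power nonlinearity (with the case split at $N=5$, where $2^*-1>2$), followed by the weighted comparison $\mathcal V^{2^*-1}\le C\,\mathcal W_{**}$ (and $W_{r,h,\Lambda}^{2^*-3}\mathcal V^2\le C\,\mathcal W_{**}$ for $N=5$) obtained by isolating the nearest centre in each $\Omega_j^{\pm}$ and summing over the remaining, mutually separated centres exactly as in the $S_{13}$-type estimates of this section. The one caveat you already flag --- that for $N=5$ the contribution $\|\phi\|_*^{2^*-1}$ is dominated by $\|\phi\|_*^{2}$ only when $\|\phi\|_*$ is bounded --- is an imprecision of the lemma's statement itself (inherited from \cite{WY-10-JFA}) and is harmless, since the lemma is only ever applied to $\phi$ in the small ball $\mathcal B$.
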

\begin{proof}
The proof is similar to that of Lemma 2.4 in \cite{WY-10-JFA}. Here we omit it.
\end{proof}

\medskip
We next give the estimate of ${\bf l}_k$.
\begin{lemma} \label{Lemma2.4}
Suppose $K(|y|)$ satisfies ${\bf H}$ and $N\ge 5$,  $(r,h,\Lambda) \in{{\mathscr S}_k}$.
There exists $k_0 $ and $C>0$ such that for all $k \geq k_0$
\begin{align}  \label{estimateforlk}
\|\, {\bf l}_k \,\|_{**}  \le  C\max\Big\{ \frac1{k^{(\frac m{N-2-m})(
\frac{N+2}{2}-\frac{N-2-m}{N-2}- \epsilon_1 )}},  \frac1{k^{(\frac {N-2}{N-2-m}) \min\{m, \frac{m+3} 2\}}} \Big\},
\end{align}
where  $\epsilon_1$  is   small constant given  in  \eqref{tau and epsilon}.
\end{lemma}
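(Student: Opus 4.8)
Here is a plan for proving the estimate \eqref{estimateforlk} on $\|\,{\bf l}_k\,\|_{**}$.

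The plan is to decompose ${\bf l}_k$ into a \emph{curvature error} and an \emph{interaction error} and to estimate each one against the weight defining $\|\cdot\|_{**}$. Since $-\Delta W_{r,h,\Lambda}=\sum_{j=1}^{k}\big(U_{\overline{x}_j,\Lambda}^{2^*-1}+U_{\underline{x}_j,\Lambda}^{2^*-1}\big)$, write ${\bf l}_k=\mathcal J_1+\mathcal J_2$ with $\mathcal J_1:=\big(K(\tfrac{|y|}{\bf r})-1\big)\sum_{j=1}^{k}\big(U_{\overline{x}_j,\Lambda}^{2^*-1}+U_{\underline{x}_j,\Lambda}^{2^*-1}\big)$ and $\mathcal J_2:=K(\tfrac{|y|}{\bf r})\big[W_{r,h,\Lambda}^{2^*-1}-\sum_{j=1}^{k}(U_{\overline{x}_j,\Lambda}^{2^*-1}+U_{\underline{x}_j,\Lambda}^{2^*-1})\big]$. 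Every function in sight lies in $H_s$, so it is enough to bound $|\mathcal J_i(y)|$ against the weight for $y\in\Omega_1^{+}$. On $\Omega_1^{+}$ the bubble $U_{\overline{x}_1,\Lambda}$ is the closest one: since $|\overline{x}_j|=|\underline{x}_j|=r$ for all $j$, one has $|y-\overline{x}_1|\le|y-\overline{x}_j|$ and $|y-\overline{x}_1|\le|y-\underline{x}_j|$, hence also $|y-\overline{x}_j|\ge\tfrac12|\overline{x}_1-\overline{x}_j|$ and $|y-\underline{x}_j|\ge\tfrac12|\overline{x}_1-\underline{x}_j|\ge r h$; these lower bounds make the sums over the $2k$ centres below converge. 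I will split $\Omega_1^{+}$ into the part with $|y-\overline{x}_1|\lesssim\tfrac r k$, where $U_{\overline{x}_1,\Lambda}$ genuinely dominates and where, because $|\overline{x}_1|=r$ is only $O(1)$ away from ${\bf r}$, the local form of $K$ in ${\bf H}$ applies, and its complement, on which every bubble is small and the crude bound $|{\bf l}_k|\le C\big(W_{r,h,\Lambda}^{2^*-1}+\sum_j(U_{\overline{x}_j,\Lambda}^{2^*-1}+U_{\underline{x}_j,\Lambda}^{2^*-1})\big)$ together with the summation estimates of Appendix \ref{appendixB} already produces something well below the right-hand side of \eqref{estimateforlk}.

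To estimate $\mathcal J_1$ I will use ${\bf H}$. Near $\overline{x}_1$ one has $\big||y|-{\bf r}\big|\le|y-\overline{x}_1|+\hat\sigma$, so on $\{\,||y|-{\bf r}|<\delta{\bf r}\,\}$ one gets $\big|K(\tfrac{|y|}{\bf r})-1\big|\le C\big(\tfrac{1+|y-\overline{x}_1|}{\bf r}\big)^{m}$, the $O(|s-1|^{m+\sigma})$ remainder only improving this, while off that set $|K-1|\le C$ and the bubbles decay fast. Combining with $U_{\overline{x}_1,\Lambda}^{2^*-1}(y)\le C(1+|y-\overline{x}_1|)^{-(N+2)}$ and with the bound on $\sum_{j\ne 1}(U_{\overline{x}_j,\Lambda}^{2^*-1}+U_{\underline{x}_j,\Lambda}^{2^*-1})$ recalled above, dividing by the weight and taking the supremum gives $\|\mathcal J_1\|_{**}\le C\,{\bf r}^{-\min\{m,\,\frac{N+2}2-\tau\}}+(\text{lower order})$. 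As $m<N-2$ and $\tau$ is small, $\min\{m,\tfrac{N+2}2-\tau\}\ge\min\{m,\tfrac{m+3}2\}$, so $\mathcal J_1$ contributes at most $C\,k^{-(\frac{N-2}{N-2-m})\min\{m,\frac{m+3}2\}}$, i.e. the second term in \eqref{estimateforlk}.

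To estimate $\mathcal J_2$ I will use the elementary inequality $\big|(\sum_i a_i)^{2^*-1}-\sum_i a_i^{2^*-1}\big|\le C\sum_{i\ne j}a_i^{2^*-2}a_j$ when $2^*-1\le 2$ (that is, $N\ge 6$), and its refinement — with the extra terms $\sum_{i\ne j}a_i^{2^*-3}a_j^{2}$ and $\sum_i a_i^{2^*-1}$ — when $N=5$. On $\Omega_1^{+}$, where $U_{\overline{x}_1,\Lambda}$ is the largest summand, the leading part is $U_{\overline{x}_1,\Lambda}^{2^*-2}\big(\sum_{j\ne 1}U_{\overline{x}_j,\Lambda}+\sum_{j}U_{\underline{x}_j,\Lambda}\big)$. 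Using $U_{\overline{x}_1,\Lambda}^{2^*-2}(y)\le C(1+|y-\overline{x}_1|)^{-4}$, $U_{\overline{x}_j,\Lambda}(y)\le C(1+|y-\overline{x}_j|)^{-(N-2)}$ and the pointwise splitting estimate of Appendix \ref{appendixB}, which on $\Omega_1^{+}$ gives
$$\frac{1}{(1+|y-\overline{x}_1|)^{4}\,(1+|y-\overline{x}_j|)^{N-2}}\le\frac{C}{|\overline{x}_1-\overline{x}_j|^{\,\theta}}\cdot\frac{1}{(1+|y-\overline{x}_1|)^{\,\frac{N+2}2+\tau}},\qquad \theta=\min\Big\{N-2,\ \tfrac{N+2}2-\tau\Big\},$$
I obtain $\big\|U_{\overline{x}_1,\Lambda}^{2^*-2}U_{\overline{x}_j,\Lambda}\big\|_{**,\,\Omega_1^{+}}\le C\,|\overline{x}_1-\overline{x}_j|^{-\theta}$. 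Summing over $j$ by means of $|\overline{x}_1-\overline{x}_j|\ge c\,\tfrac r k\,\min\{j-1,\,k-j+1\}$ and $\theta>1$ yields $C\,(k/r)^{\theta}$, which for $N\ge 6$ (where $\theta=\tfrac{N+2}2-\tau$) equals $C\,k^{-(\frac m{N-2-m})(\frac{N+2}2-\frac{N-2-m}{N-2}-\epsilon_1)}$ once one uses $\tau\in\big(\tfrac{N-2-m}{N-2},\tfrac{N-2-m}{N-2}+\epsilon_1\big)$, and for $N=5$ (where $\theta=N-2$) equals $C\,(k/r)^{N-2}=C\,k^{-(\frac{N-2}{N-2-m})m}$. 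The mirror terms $U_{\overline{x}_1,\Lambda}^{2^*-2}U_{\underline{x}_j,\Lambda}$ are handled the same way using $|\overline{x}_1-\underline{x}_j|^{2}=|\overline{x}_1-\overline{x}_j|^{2}+4r^2h^2\ge 4r^2h^2$, and, since $hk\to\infty$, turn out to be of strictly smaller order; likewise for the quadratic and pure-power terms appearing when $N=5$ and for all cross terms not involving $\overline{x}_1$. Adding the bounds for $\mathcal J_1$ and $\mathcal J_2$ and recalling $r\sim{\bf r}$ gives \eqref{estimateforlk}.

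The hard part is the sharpness of the $\mathcal J_2$ estimate: the exponents in the splitting must be chosen so that the decay in $|y-\overline{x}_1|$ matches \emph{exactly} the weight exponent $\tfrac{N+2}2+\tau$, while the power $\theta$ of $|\overline{x}_1-\overline{x}_j|^{-1}$ stays above $1$ so that the sum over the $2k$ centres converges; this forces $\tfrac{N+2}2-\tau>1$ and, for the stated exponent, $\tau$ as close as possible to $\tfrac{N-2-m}{N-2}$, which is precisely the constraint \eqref{tau and epsilon}. A secondary difficulty is that for $N=5$ the nonlinearity is super-quadratic, so the cross terms $a_i^{2^*-3}a_j^{2}$ and $a_j^{2^*-1}$ must be retained and shown not to spoil the bound (this is where the lower bound on $m$ in \eqref{assumptionform} enters), and that in $\mathcal J_1$ the expansion of $K$ from ${\bf H}$ is available only on $\{||y|-{\bf r}|<\delta{\bf r}\}$, so its complement — and the tangential directions near $\overline{x}_1$, where $|y|-r$ is of order $|y-\overline{x}_1|^{2}/r$ rather than $|y-\overline{x}_1|$ — have to be treated by hand; in each of these regimes the contribution stays comfortably below $\max\{\,\cdot\,,\,\cdot\,\}$ in \eqref{estimateforlk}.
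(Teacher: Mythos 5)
Your proposal follows essentially the same route as the paper's proof: the same splitting of ${\bf l}_k$ into the interaction error and the curvature error, reduction to $\Omega_1^+$ by symmetry, the separation estimate that transfers $\min\{N-2,\tfrac{N+2}{2}-\tau\}$ powers onto $|\overline{x}_1-\overline{x}_j|^{-1}$ (which is exactly why $N=5$ is special), and the lower bound $|\overline{x}_1-\underline{x}_j|\ge 2rh\gtrsim {\bf r}/k$ for the cross-layer terms. The only variation is in the curvature error, where you obtain the exponent $\min\{m,\tfrac{N+2}{2}-\tau\}$ directly instead of the paper's $\min\{m,\tfrac{m+3}{2}\}$; since the former is at least as large under ${\bf H}$, this still yields \eqref{estimateforlk}.
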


\begin{proof}
We can rewrite ${\bf l}_k$ as
\[
\begin{split}
 {\bf l}_k
 \,=\,& K\Big(\frac{|y|}{{\bf r}}\Big) \Big[\, W_{r,h,\Lambda}^{2^*-1}-\sum_{j=1}^k \Big(\, U_{\overline{x}_j,\Lambda}^{2^*-1}+U_{\underline{x}_j,\Lambda}^{2^*-1} \,\Big)  \,\Big]
 \\[2mm]
 &+\sum_{j=1}^k  \Big[\ K\Big(\frac{|y|}{{\bf r}}\Big)-1  \,\Big]  \Big(\, U_{\overline{x}_j,\Lambda}^{2^*-1}+U_{\underline{x}_j,\Lambda}^{2^*-1} \,\Big)
 :=\, S_1+S_2.
\end{split}
\]

Assume that  $ y \in \Omega_1^+ $, then we get
\begin{align*}
S_1
&=  K\Big(\frac{|y|}{{\bf r}}\Big) \Big[\, \Big(\, \sum_{j=1}^k  U_{\overline{x}_j,\Lambda}+U_{\underline{x}_j,\Lambda}  \,\Big)^{2^*-1}-\sum_{j=1}^k \Big(\, U_{\overline{x}_j,\Lambda}^{2^*-1}+U_{\underline{x}_j,\Lambda}^{2^*-1} \,\Big)  \,\Big]
\nonumber \\[2mm]
& \leq  C K\Big(\frac{|y|}{{\bf r}}\Big)  \Big[U_{\overline{x}_1,\Lambda}^{2^*-2}\, \Big(\sum_{j=2}^k  U_{\overline{x}_j,\Lambda}+\sum_{j=1}^k U_{\underline{x}_j,\Lambda}\Big)+\Big(\sum_{j=2}^k  U_{\overline{x}_j,\Lambda}+\sum_{j=1}^k U_{\underline{x}_j,\Lambda}\Big)^{2^*-1}\Big].
\end{align*}
 Thus, we have
\begin{align*}
S_1 \,\le\, & C\frac1{(1+|y-\overline{x}_1|)^4} \sum_{j=2}^k \frac1{(1+|y-\overline{x}_j|)^{N-2}}
+C\frac1{(1+|y-\overline{x}_1|)^4} \sum_{j=1}^k \frac1{(1+|y-\underline{x}_j|)^{N-2}}
\\[2mm]
&+C\Bigl(\sum_{j=2}^k \frac1{(1+|y-\overline{x}_j|)^{N-2}}\Bigr)^{2^*-1}
:=\,S_{11}+S_{12}+S_{13}.
\end{align*}

 We first consider the case $N=5$. It is easy to get that
 \begin{align}\label{S11N=5}
S_{11}\big|_{N=5}  \,\le\, & C\frac1{(1+|y-\overline{x}_1|)^{\frac 72+ \tau}} \sum_{j=2}^k \frac1{|\overline{x}_j-\overline{x}_1|^{3}}  \nonumber
\\[2mm]
 \,\le\, & C\frac1{(1+|y-\overline{x}_1|)^{\frac 72+ \tau}} \Big(\frac k{\bf r} \Big)^3.
 \end{align}
 When $N\ge 6$, similar to the proof of Lemma \ref{b.0}, for any  $  1 <  \alpha_1  < N-2 $, we have
\begin{align*}
\sum_{j=2}^k \frac1{(1+|y-\overline{x}_j|)^{N-2}}
\le \frac{C} {(1+|y-\overline{x}_1|)^{N-2-\alpha_1}}\,\sum_{j=2}^k \, \frac1{|\overline{x}_j-\overline{x}_1|^{\alpha_1}}.
\end{align*}
Since  $ \tau  \in ( \frac{N-2-m}{N-2},  \frac{N-2-m}{N-2}+ \epsilon_1 ) $, we can choose $\alpha_1$ satisfies
\begin{equation*}
\frac{N+2}{2}-\frac{N-2-m}{N-2}- \epsilon_1<\alpha_1   =\frac{N+2}{2}-\tau<N-2.
\end{equation*}
Then
\begin{align} \label{S11n>5}
 S_{11}\big|_{N\ge 6}
& \le  \frac{C} {(1+|y-\overline{x}_1|)^{N+2-\alpha_1}}\,\sum_{j=2}^k \, \frac1{|\overline{x}_j-\overline{x}_1|^{\alpha_1}} \nonumber
\\[2mm]
& \le  \frac{C} {(1+|y-\overline{x}_1|)^{N+2-\alpha_1}} \, \Big(\frac{k}{{\bf r}\,\sqrt{1-h^2} }\Big)^{\alpha_1}\nonumber
\\[2mm]
&\le C\frac1{(1+|y-\overline{x}_1|)^{\frac{N+2}2+\tau}}\Big(\frac k{\bf r} \Big)^{
\frac{N+2}{2}-\frac{N-2-m}{N-2}- \epsilon_1 }.
\end{align}
Then combining \eqref{S11N=5} and \eqref{S11n>5}, we can get
 \begin{eqnarray}\label{S11}
\|S_{11}\|_{**}\le   \left\{\begin{array}{rcl}    C\Big(\frac k{\bf r} \Big)^{
\frac{N+2}{2}-\frac{N-2-m}{N-2}- \epsilon_1 },\qquad \text{if}\quad N \ge  6,
&&\\[2mm]
\space\\[2mm]
C\Big(\frac k{\bf r} \Big)^3,\qquad \text{if} \quad N=5.  \end{array}\right.
\end{eqnarray}

For $S_{12}$, we can rewrite it as following
\begin{align*}
 S_{12} &= C\frac1{(1+|y-\overline{x}_1|)^4}\Big[\frac1{(1+|y-\underline{x}_1|)^{N-2}}+\sum_{j=2}^k \frac1{(1+|y-\underline{x}_j|)^{N-2}}\Big]
\\[2mm]
&\le C\frac1{(1+|y-\overline{x}_1|)^4}\Big[\frac1{(1+|y-\underline{x}_1|)^{N-2}}+\sum_{j=2}^k \frac1{(1+|y-\bar{x}_j|)^{N-2}}\Big].
\end{align*}
Similarly  to \eqref{S11N=5},
we can obtain
\begin{align*}
S_{12}\big|_{N=5}
 \,\le\,  C\frac1{(1+|y-\overline{x}_1|)^{\frac 72+ \tau}} \Big(\frac k{\bf r} \Big)^3.
 \end{align*}
For $N\geq 6$ and the same  $\alpha_1$ as in \eqref{S11}, it is easy to derive that
\begin{align*}
 &\frac1{(1+|y-\overline{x}_1|)^4}\frac1{(1+|y-\underline{x}_1|)^{N-2}}
\\[2mm]
& \le \Big[\,\frac1{(1+|y-\overline{x}_1|)^{N+2-\alpha_1}}+\frac1{(1+|y-\underline{x}_1|)^{N+2-\alpha_1}}\,\Big]\frac1{|\underline{x}_1-\overline{x}_1|^{
\alpha_1}}
\\[2mm]
& \le  \frac{C} {(1+|y-\overline{x}_1|)^{N+2-\alpha_1}}\,\frac 1 {(hr)^{\alpha_1}}
\\[2mm]
& \le C\frac1{(1+|y-\overline{x}_1|)^{\frac{N+2}2+\tau}}\Big(\frac k{\bf r} \Big)^{
\frac{N+2}{2}-\frac{N-2-m}{N-2}- \epsilon_1 },
\end{align*}
where we have used the fact $ h r >  C \frac{\bf r} k$.
Thus, we can obtain that
 \begin{eqnarray}\label{S12}
 \|S_{12}\|_{**} \le   \left\{\begin{array}{rcl}  C\Big(\frac k{\bf r} \Big)^{
\frac{N+2}{2}-\frac{N-2-m}{N-2}- \epsilon_1 },\qquad \text{if}\quad N \ge  6,
&&\\[2mm]
\space\\[2mm]
C \Big(\frac k{\bf r} \Big)^3,\qquad \text{if} \quad N=5.  \end{array}\right.
\end{eqnarray}

Next, we consider  $S_{13}$. For  $ y \in \Omega_1^+$,
\begin{align*}
\sum_{j=2}^k\, \frac1{(1+|y-\overline{x}_j|)^{N-2}}
& \le  \sum_{j=2}^k \frac1{(1+|y-\overline{x}_1|)^{\frac{N-2}{2}}} \, \frac1{(1+|y-\overline{x}_j|)^{\frac{N-2}{2}}}
\nonumber\\[2mm]
& \le  \sum_{j=2}^k\,\frac{C}{|\overline{x}_j-\overline{x}_1|^{\frac{N-2}{2}-\frac{N-2}{N+2}\tau}}
\frac 1{(1+|y-\overline{x}_1|)^{\frac{N-2}{2}+\frac{N-2}{N+2}\tau}}
\\[2mm]
& \le C\Big(\,\frac k {{\bf r}\sqrt{1-h^2}}\,\Big)^{\frac{N-2}{2}-\frac{N-2}{N+2}\tau} \frac 1{(1+|y-\overline{x}_1|)^{\frac{N-2}{2}+\frac{N-2}{N+2}\tau}}.
\nonumber
\end{align*}
Thus we have
\begin{equation*}
\begin{split}
S_{13}
& \le \, \Big(\, \frac k {{\bf r}\sqrt{1-h^2}}  \,\Big)^{\frac{N+2}{2}-\tau} \frac{C}{(1+|y-\overline{x}_1|)^{\frac{N+2}{2}+\tau}}
 \\[2mm]
& \le\frac{C}{(1+|y-\overline{x}_1|)^{\frac{N+2}{2}+\tau}}  \Big(\frac k{\bf r} \Big)^{
\frac{N+2}{2}-\frac{N-2-m}{N-2}- \epsilon_1}.
\end{split}
\end{equation*}
Since  $\big(\frac{N+2}{2}-\frac{N-2-m}{N-2}- \epsilon_1\big)\big|_{N=5}   >3$ for $m\in [2, 3)$,  then we have
 \begin{eqnarray}\label{S13}
\| S_{13}\|_{**} \le   \left\{\begin{array}{rcl}  C\Big(\frac k{\bf r} \Big)^{
\frac{N+2}{2}-\frac{N-2-m}{N-2}- \epsilon_1 },\qquad \text{if}\quad N \ge  6,
&&\\[2mm]
\space\\[2mm]
C \Big(\frac k{\bf r} \Big)^3,\qquad \text{if} \quad N=5.  \end{array}\right.
\end{eqnarray}
 Combining \eqref{S11}, \eqref{S12}, \eqref{S13}, we obtain
 \begin{eqnarray}\label{S1}
\|S_1\|_{**} \le   \left\{\begin{array}{rcl}  C \Big(\frac k{\bf r} \Big)^{
\frac{N+2}{2}-\frac{N-2-m}{N-2}- \epsilon_1 },\qquad \text{if}\quad N \ge  6,
&&\\[2mm]
\space\\[2mm]
C \Big(\frac k{\bf r} \Big)^3,\qquad \text{if} \quad N=5.  \end{array}\right.
\end{eqnarray}

We now consider the estimate of $S_2$.  For  $ y\in  \Omega_1^+ $, we have
\begin{align*}
S_2 \,\le \,& 2\sum_{j=1}^k  \Big[\ K\Big(\frac{|y|}{{\bf r}}\Big)-1  \,\Big]  \, U_{\overline{x}_j,\Lambda}^{2^*-1}
\\[2mm]
\,=\,& 2\,U_{\overline{x}_1,\Lambda}^{2^*-1} \,\Big[K \Big(\frac{|y|}{{\bf r}}\Big)-1\Big]
+2\,\sum_{j=2}^k \, U_{\overline{x}_j,\Lambda}^{2^*-1} \, \Big[K\Big(\frac{|y|}{{\bf r}}\Big)-1\Big]
\\[2mm]
:=\,& S_{21}+S_{22}.
\end{align*}
$\bullet $ If  $|\frac{|y|}{\bf r}-1|\ge \delta_1,  $ where  $ \delta> \delta_1> 0  $, then
\begin{align*}
|y-\overline{x}_1| \ge \big||y |-{\bf r}\big|\,-\,\big|{\bf r} -|\overline{x}_1|\big| \ge \frac{1}{2} \delta_1 {\bf r}.
\end{align*}
As a result, we get
\begin{align*}
U_{\overline{x}_1,\Lambda}^{2^*-1} \,\Big[K\Big(\frac{|y|}{{\bf r}}\Big)-1\Big]
& \le \frac{C}{\big(1+|y-\overline{x}_{1}|\big)^{\frac{N+2} 2+\tau}}  \frac{1}{{\bf r}^{\frac{N+2} 2-\tau}}
\\[2mm]
& \le \frac{C}{\big(1+|y-\overline{x}_{1}|\big)^{\frac{N+2} 2+\tau}}  \Big(\frac k{\bf r} \Big)^{
\frac{N+2}{2}-\frac{N-2-m}{N-2}- \epsilon_1 }.
\end{align*}
$\bullet $ If $|\frac{|y|}{\bf r}-1|\le \delta_1,$ then
\begin{align*}
\Big[K\Big(\frac{|y|}{{\bf r}}\Big)-1\Big]  \le& C\Big|\frac{|y|}{\bf r}-1\Big|^{m} = \frac{C} {{\bf r}^{m}}||y|-{\bf r}|^{m}
\\[2mm]
\leq&\frac{C} {{\bf r}^{m}}\Big[\big||y|-|\overline{x}_1|\big|^{m}
\,+\,\big||\overline{x}_1|-{\bf r}\big|^{m}\Big]
\\[2mm]
\leq&\frac{C} {{\bf r}^{m}}\Big[\big||y|-|\overline{x}_1|\big|^{m}
 \,+\,\frac{1}{k^{{\bar\theta m}}}\Big].
\end{align*}
 Thus, we can get, if $m>3$,
\begin{align*}
 U_{\overline{x}_1,\Lambda}^{2^*-1} \,\Big[K\Big(\frac{|y|}{{\bf r}}\Big)-1\Big]
 &  \le \frac{C} {{\bf r}^{m}}\Big[\big||y|-|\overline{x}_1|\big|^{m}
 \,+\,\frac{1}{k^{{\bar\theta m}}}\Big] \frac{C}{\big(1+|y-\overline{x}_{1}|\big)^{N+2}}
 \nonumber\\[2mm]
 &\leq\frac{C} {{\bf r}^{\frac{m+3} 2 }}\Big[\frac{\big||y|-|\overline{x}_1|\big|^{\frac{m+3} 2 }}{\big(1+|y-\overline{x}_{1}|\big)^{N+2}}
 \,+\,\frac{1} {{\bf r}^{\frac{m-3} 2 }}\frac{1}{k^{{\bar\theta m}}}\frac{1}{\big(1+|y-\overline{x}_{1}|\big)^{N+2}} \Big]
\\
 & \le  \frac{C}{{\bf r}^{\frac{m+3} 2 }} \Big[ \frac{1}{\big(1+|y-\overline{x}_{1}|\big)^{\frac{N+2} 2+\tau}}  \frac{1}{\big(1+|y-\overline{x}_{1}|\big)^{\frac{N+2} 2- \tau-\frac{m+3}{2}}}
 +\frac{1}{\big(1+|y-\overline{x}_{1}|\big)^{N+2}}\Big]
 \\[2mm]
 & \le  \frac{1}{{\bf r}^{\frac{m+3} 2 }}  \frac{C}{\big(1+|y-\overline{x}_{1}|\big)^{\frac{N+2} 2+\tau}},
\end{align*}
the last   inequality holds due to  $\frac{N+2} 2- \tau-\frac{m+3}{2}>0.$

\medskip
On  the other hand,  if $m\le 3$, we have
\begin{align*}
 U_{\overline{x}_1,\Lambda}^{2^*-1} \,\Big[K\Big(\frac{|y|}{{\bf r}}\Big)-1\Big]
 &  \le \frac{C} {{\bf r}^{m}}\Big[\big||y|-|\overline{x}_1|\big|^{m}
 \,+\,\frac{1}{k^{{\bar\theta m}}}\Big] \frac{C}{\big(1+|y-\overline{x}_{1}|\big)^{N+2}}  \nonumber
 \\
 & \le  \frac{C}{{\bf r}^{m}} \Big[ \frac{1}{\big(1+|y-\overline{x}_{1}|\big)^{\frac{N+2} 2+\tau}}  \frac{1}{\big(1+|y-\overline{x}_{1}|\big)^{\frac{N+2} 2- \tau -m }}
 +\frac{1}{\big(1+|y-\overline{x}_{1}|\big)^{N+2}}\Big]
 \nonumber
 \\
 & \le  \frac{C}{{\bf r}^{m}}   \frac{1}{\big(1+|y-\overline{x}_{1}|\big)^{\frac{N+2} 2+\tau}},
\end{align*}
since $\frac{N+2} 2- \tau -m>0.$
Thus we have
\begin{align*}
 U_{\overline{x}_1,\Lambda}^{2^*-1} \,\Big[K\Big(\frac{|y|}{{\bf r}}\Big)-1\Big]
  \le  \frac{C}{{\bf r}^{ \min\{m, \frac{m+3} 2\} }}   \frac{1}{\big(1+|y-\overline{x}_{1}|\big)^{\frac{N+2} 2+\tau}}.
\end{align*}
  As a result,
\begin{equation} \label{S21}
 S_{21} \le  C\max\Big\{\Big(\frac k{\bf r} \Big)^{
\frac{N+2}{2}-\frac{N-2-m}{N-2}- \epsilon_1 },  \frac{1}{{\bf r}^{ \min\{m, \frac{m+3} 2\} }} \Big\}
\frac1{(1+|y-\overline{x}_1|)^{\frac{N+2}2+\tau}}.
 \end{equation}

Since  $ y \in \Omega_1^+ $, then for  $j= 2, \cdots, k $,  there holds
\begin{equation*}
|\overline{x}_1-\overline{x}_j|\le|y-\overline{x}_1|+|y-\overline{x}_j|\le 2|y-\overline{x}_j|.
\end{equation*}
Therefore, it is easy to derive that
\begin{align} \label{S22}
S_{22}&\le  C\frac{1}{(1+|y-\overline{x}_1|)^{\frac{N+2}2}}\sum_{j=2}^k \frac{1}{(1+|y-\overline {x}_j|)^{\frac{N+2}2}}
\nonumber \\[2mm]
& \le C  \frac{1}{(1+|y-\overline{x}_1|)^{\frac{N+2}2+\tau}} \sum_{j=2}^k \frac{1}{|\overline{x}_1-\overline{x}_j|^{\frac{N+2}2-\tau}}
\nonumber \\[2mm]
& \le  \frac{C}{(1+|y-\overline{x}_1|)^{\frac{N+2}2+\tau}} \Big(\frac k{\bf r} \Big)^{
\frac{N+2}{2}-\frac{N-2-m}{N-2}- \epsilon_1 }.
\end{align}
Combining \eqref{S21} with \eqref{S22}, we obtain
 \begin{equation*}
\|S_2\|_{**} \le C\max\Big\{\Big(\frac k{\bf r} \Big)^{
\frac{N+2}{2}-\frac{N-2-m}{N-2}- \epsilon_1 },  \frac{1}{{\bf r}^{ \min\{m, \frac{m+3} 2\} }} \Big\}.
 \end{equation*}

If $N=5$, we can check that $ \frac{1}{{\bf r}^{ m}}\,=\,\Big(\frac k{\bf r} \Big)^3 $. Thus, we can rewrite \eqref{S1} as
 \begin{align*}
 \|S_1\|_{**} \le  C\max\Big\{\Big(\frac k{\bf r} \Big)^{
\frac{N+2}{2}-\frac{N-2-m}{N-2}- \epsilon_1 },  \frac{1}{{\bf r}^{ \min\{m, \frac{m+3} 2\} }} \Big\}.
 \end{align*}
 Therefore, we showed  \eqref{estimateforlk}.
\end{proof}

\medskip
The  solvability theory  for the  projected problem \eqref{re1}  can be provided in the following:
\begin{proposition} \label{pro3}
Suppose that  $ K(|y|)$ satisfies  ${\bf H}$ and  $N\ge 5$, $(r,h,\Lambda) \in{{\mathscr S}_k}$. There exists an integer  $k_0$ large enough, such that for all  $k \ge k_0$ problem \eqref{re1} has a unique solution $\phi_k$ which satisfies
\begin{align} \label{estimateforphik}
\|\phi_k\|_{*} \le  C\max\Big\{ \frac1{k^{(\frac m{N-2-m})(
\frac{N+2}{2}-\frac{N-2-m}{N-2}- \epsilon_1 )}},  \frac1{k^{(\frac {N-2}{N-2-m}) \min\{m, \frac{m+3} 2\}}} \Big\},
\end{align}
 and
\begin{align}\label{estimateforc}
|c_{\ell}|\le   \frac C{(1+\delta_{\ell 2}{\bf r})} \max\Big\{ \frac1{k^{(\frac m{N-2-m})(
\frac{N+2}{2}-\frac{N-2-m}{N-2}- \epsilon_1 )}},  \frac1{k^{(\frac {N-2}{N-2-m}) \min\{m, \frac{m+3} 2\}}} \Big\},    \quad{for} ~ \ell=1,2,3.
\end{align}
\end{proposition}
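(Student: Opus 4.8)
The plan is to solve the projected problem \eqref{re1} by a fixed point argument. Using the bounded linear solution operator ${\bf L}_k$ supplied by Proposition \ref{p1}, I would rewrite \eqref{re1} as the fixed point equation $\phi = \mathcal{A}(\phi)$, where $\mathcal{A}(\phi) := {\bf L}_k\big({\bf N}(\phi) + {\bf l}_k\big)$. By construction $\phi$ solves \eqref{re1} (with the associated multipliers $c_\ell$) if and only if it is a fixed point of $\mathcal{A}$; moreover $\mathcal{A}$ does map $\mathbb{E}$ into itself, since ${\bf L}_k$ takes values in $\mathbb{E}$ and both ${\bf N}(\phi)$ and ${\bf l}_k$ inherit the symmetries defining $H_s$, and ${\bf N}(\phi)+{\bf l}_k\in L^\infty(\R^N)$ whenever $\|\phi\|_*<\infty$. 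Denote by $\rho_k$ the right-hand side of \eqref{estimateforlk},
\[
\rho_k := \max\Big\{ \tfrac1{k^{(\frac m{N-2-m})(\frac{N+2}{2}-\frac{N-2-m}{N-2}- \epsilon_1 )}},\ \tfrac1{k^{(\frac {N-2}{N-2-m}) \min\{m, \frac{m+3} 2\}}} \Big\},
\]
so that $\rho_k\to 0$ as $k\to\infty$. I would run the contraction mapping principle in the closed ball $\mathcal{B}_k := \{\phi\in\mathbb{E} : \|\phi\|_*\le C_\ast\rho_k\}$ for a large constant $C_\ast$, fixed independently of $k$.

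First I would check $\mathcal{A}(\mathcal{B}_k)\subset\mathcal{B}_k$. For $\phi\in\mathcal{B}_k$, combining the a priori bound from \eqref{Le}, Lemma \ref{Lemma2.3} and Lemma \ref{Lemma2.4} gives
\[
\|\mathcal{A}(\phi)\|_* \le C\big(\|{\bf N}(\phi)\|_{**}+\|{\bf l}_k\|_{**}\big) \le C\|\phi\|_*^{\min\{2^*-1,\,2\}}+C\rho_k \le C\,(C_\ast\rho_k)^{\min\{2^*-1,\,2\}}+C\rho_k.
\]
Since $N\ge5$ forces $\min\{2^*-1,2\}>1$ and $\rho_k\to0$, the first term is $o(\rho_k)$; hence, fixing $C_\ast$ large once and for all and then choosing $k_0$ large, we obtain $\|\mathcal{A}(\phi)\|_*\le C_\ast\rho_k$ for all $k\ge k_0$.

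Next I would prove $\mathcal{A}$ is a contraction on $\mathcal{B}_k$. By \eqref{Le} it suffices to control $\|{\bf N}(\phi_1)-{\bf N}(\phi_2)\|_{**}$. Expanding the difference and using the elementary inequality, with $p=2^*-1$ and $a=W_{r,h,\Lambda}$,
\[
\big|(a+s)^{p}-(a+t)^{p}-p\,a^{p-1}(s-t)\big| \le C\big(|s|+|t|\big)^{\min\{p,\,2\}-1}\,|s-t|,
\]
which is exactly the computation behind Lemma 2.4 in \cite{WY-10-JFA}, now carried out for the doubled configuration $\{\overline{x}_j\}\cup\{\underline{x}_j\}$, one gets
\[
\|{\bf N}(\phi_1)-{\bf N}(\phi_2)\|_{**} \le C\big(\|\phi_1\|_*+\|\phi_2\|_*\big)^{\min\{2^*-1,\,2\}-1}\,\|\phi_1-\phi_2\|_*.
\]
On $\mathcal{B}_k$ the prefactor is $\le C\,(2C_\ast\rho_k)^{\min\{2^*-1,2\}-1}=o(1)$, so $\|\mathcal{A}(\phi_1)-\mathcal{A}(\phi_2)\|_*\le\tfrac12\|\phi_1-\phi_2\|_*$ after enlarging $k_0$ if necessary. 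The Banach fixed point theorem then yields a unique $\phi_k\in\mathcal{B}_k$ solving \eqref{re1}, which is \eqref{estimateforphik}. Finally, applying the second bound in \eqref{Le} to $f={\bf N}(\phi_k)+{\bf l}_k$, together with $\|{\bf N}(\phi_k)\|_{**}+\|{\bf l}_k\|_{**}\le C\rho_k$ and $r\sim{\bf r}$ on ${\mathscr S}_k$, gives $|c_\ell|\le\tfrac{C}{1+\delta_{\ell 2}{\bf r}}\rho_k$, which is \eqref{estimateforc}.

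I expect the one genuinely delicate point to be the Lipschitz estimate for ${\bf N}$ in the $\|\cdot\|_{**}$ norm: it requires separating the subquadratic case $2^*-1\le2$ (i.e. $N\ge6$) from the superquadratic case $N=5$, and carefully tracking the decay weights centred at both families $\overline{x}_j$ and $\underline{x}_j$. This is, however, entirely parallel to the argument in \cite{WY-10-JFA}, so I would invoke it rather than reproduce the computation; everything else — the self-mapping property, the smallness of $\rho_k$, and the bound on $c_\ell$ — is soft and follows directly from Proposition \ref{p1} and Lemmas \ref{Lemma2.3}, \ref{Lemma2.4}.
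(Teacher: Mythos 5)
Your proposal is correct and follows essentially the same route as the paper: both set up the fixed point equation $\phi={\bf L}_k({\bf N}(\phi)+{\bf l}_k)$ via Proposition \ref{p1}, run the contraction mapping principle in the ball of radius $C\rho_k$ using Lemma \ref{Lemma2.3} and Lemma \ref{Lemma2.4}, and read off the bound on $c_\ell$ from the linear estimates. The only difference is cosmetic: the paper cites \eqref{estimate cl} for the multiplier bound where you invoke the second inequality in \eqref{Le}, which amounts to the same thing.
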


\begin{proof}
 We first denote
\begin{align*}
\mathcal{B}:= \Bigg\{v:  v\in \mathbb{E} \quad\|v\|_* \le   C\max\Big\{ \frac1{k^{(\frac m{N-2-m})(
\frac{N+2}{2}-\frac{N-2-m}{N-2}- \epsilon_1 )}},  \frac1{k^{(\frac {N-2}{N-2-m}) \min\{m, \frac{m+3} 2\}}} \Big\} \Bigg\}.
 \end{align*}
 From Proposition \ref{p1}, we know that problem \eqref{re1} is equivalent to the following fixed point problem
\begin{align*}
 \phi= {\bf  L}_k \big({{\bf N}}(\phi)+{\bf l}_k\big)= : {\bf A}(\phi),
\end{align*}
 where  ${\bf  L}_k$ is the linear bounded operator defined in Proposition \ref{p1}.

From Lemma \ref{Lemma2.3} and Lemma \ref{Lemma2.4}, we know, for  $\phi \in \mathcal{B} $
\begin{align*}
\|{\bf A}(\phi)\|_*  & \le C  \Big(\|{{\bf N}}(\phi)\|_{**}+\|{\bf l}_k \|_{**}\Big)
\nonumber \\[2mm]
& \le\, O(\| \phi\|_*^{1+\sigma} )+ \max\Big\{ \frac1{k^{(\frac m{N-2-m})(
\frac{N+2}{2}-\frac{N-2-m}{N-2}- \epsilon_1 )}},  \frac1{k^{(\frac {N-2}{N-2-m}) \min\{m, \frac{m+3} 2\}}} \Big\}
\nonumber \\[2mm]
&  \le   \max\Big\{ \frac1{k^{(\frac m{N-2-m})(
\frac{N+2}{2}-\frac{N-2-m}{N-2}- \epsilon_1 )}},  \frac1{k^{(\frac {N-2}{N-2-m}) \min\{m, \frac{m+3} 2\}}} \Big\}.
\end{align*}
So the operator ${\bf A}$ maps from  $\mathcal{B}$ to $\mathcal{B}$. Furthermore, we can show that ${\bf A}$ is a contraction mapping.
In fact, for any  $\phi_1, \phi_2 \in \mathcal{B}$, we have
\begin{align*}
\|{\bf A}(\phi_1)-{\bf A}(\phi_2) \|_*  \le C\|{{\bf N}}(\phi_1)-{{\bf N}}(\phi_2)\|_{**}.
\end{align*}
 Since ${{\bf N}}(\phi)$ has a power-like behavior with power greater than one, then we can easily get
\begin{align*}
 \|{\bf A}(\phi_1)-{\bf A}(\phi_2) \|_* \le o(1)\| \phi_1-\phi_2\|_*.
\end{align*}
 A direct application of the contraction mapping principle yields that problem \eqref{re1} has a unique solution  $ \phi \in \mathcal{B}$.  The estimates for  $ c_{\ell}, \ell=1,2,3  $ can be got easily from \eqref{estimate cl}.
\end{proof}

%

  \section{Proof of Theorem \ref{main1}}  \label{sec4}

%

\begin{proposition} \label{pro2.4}
Let $ \phi_{r,h, \Lambda}$ be a function obtained in Proposition \ref{pro3} and
\begin{align*}
F(r,h,\Lambda):= I( W_{r, h, \Lambda}+\phi_{r,h, \Lambda}).
\end{align*}
If $(r,h, \Lambda)$ is a critical point of  $F(r,h,\Lambda)$, then
\begin{align*}
 u= W_{r,h,\Lambda}+\phi_{r,h,\Lambda}
\end{align*}
is a critical point of $I(u)$ in  $H^1(\mathbb{R}^N)$. \qed
\end{proposition}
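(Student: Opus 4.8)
The plan is to exploit the variational structure built into the auxiliary problem \eqref{re1}. Recall that $\phi_{r,h,\Lambda}\in\mathbb{E}$ together with the multipliers $c_\ell=c_\ell(r,h,\Lambda)$, $\ell=1,2,3$, solves
$$
-\Delta\bigl(W_{r,h,\Lambda}+\phi_{r,h,\Lambda}\bigr)
=K\Big(\tfrac{|y|}{{\bf r}}\Big)\bigl(W_{r,h,\Lambda}+\phi_{r,h,\Lambda}\bigr)^{2^*-1}
+\sum_{\ell=1}^3\sum_{j=1}^k c_\ell\Big(U_{\overline{x}_j,\Lambda}^{2^*-2}\overline{\mathbb{Z}}_{\ell j}+U_{\underline{x}_j,\Lambda}^{2^*-2}\underline{\mathbb{Z}}_{\ell j}\Big).
$$
In other words, writing $u=W_{r,h,\Lambda}+\phi_{r,h,\Lambda}$, we have
$$
I'(u)=\sum_{\ell=1}^3\sum_{j=1}^k c_\ell\Big(U_{\overline{x}_j,\Lambda}^{2^*-2}\overline{\mathbb{Z}}_{\ell j}+U_{\underline{x}_j,\Lambda}^{2^*-2}\underline{\mathbb{Z}}_{\ell j}\Big)
$$
as an identity in $H^{-1}$, and $u$ is a genuine critical point of $I$ precisely when all the $c_\ell$ vanish. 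So the task reduces to showing: if $(r,h,\Lambda)$ is a critical point of $F$, then $c_1=c_2=c_3=0$.

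First I would differentiate $F(r,h,\Lambda)=I(u)$ with respect to each of the three parameters $r$, $h$, $\Lambda$. By the chain rule,
$$
\partial_a F(r,h,\Lambda)=\Big\langle I'(u),\ \partial_a\bigl(W_{r,h,\Lambda}+\phi_{r,h,\Lambda}\bigr)\Big\rangle
=\Big\langle I'(u),\ \partial_a W_{r,h,\Lambda}\Big\rangle+\Big\langle I'(u),\ \partial_a\phi_{r,h,\Lambda}\Big\rangle,
\qquad a\in\{r,h,\Lambda\}.
$$
Substituting the expression for $I'(u)$ above, the first term is
$$
\sum_{\ell=1}^3\sum_{j=1}^k c_\ell\int_{\R^N}\Big(U_{\overline{x}_j,\Lambda}^{2^*-2}\overline{\mathbb{Z}}_{\ell j}+U_{\underline{x}_j,\Lambda}^{2^*-2}\underline{\mathbb{Z}}_{\ell j}\Big)\,\partial_a W_{r,h,\Lambda}.
$$
Since $\partial_a W_{r,h,\Lambda}=\sum_{j=1}^k(\overline{\mathbb{Z}}_{a j}+\underline{\mathbb{Z}}_{a j})$ (with the index identification $a=r\leftrightarrow 1$, $a=h\leftrightarrow 2$, $a=\Lambda\leftrightarrow 3$), and using the near-orthogonality relations among the $U^{2^*-2}\mathbb{Z}$'s established in the computations behind Lemma \ref{lem2.1} — the same ones giving $\int U_{\overline{x}_1,\Lambda}^{2^*-2}\overline{\mathbb{Z}}_{\ell 1}\overline{\mathbb{Z}}_{q1}=\bar c_q(1+\delta_{q2}r^2)\delta_{\ell q}$, with cross terms between distinct bubbles being lower order — this produces, for each $a$,
$$
\partial_a F(r,h,\Lambda)=\sum_{\ell=1}^3 c_\ell\,\big(M_{a\ell}+o(\cdot)\big),
$$
where the leading matrix $M=(M_{a\ell})$ is diagonal and invertible (its entries are $k$ times the nonzero constants $\bar c_q(1+\delta_{q2}r^2)$). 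The second term $\langle I'(u),\partial_a\phi_{r,h,\Lambda}\rangle$ is handled by noting $\phi_{r,h,\Lambda}\in\mathbb{E}$, so $\int U^{2^*-2}\mathbb{Z}_{\ell j}\,\phi=0$; differentiating this constraint in $a$ shows that $\partial_a\phi$ is \emph{almost} in $\mathbb{E}$, with the defect involving $\partial_a(U^{2^*-2}\mathbb{Z}_{\ell j})$ paired against $\phi$, which is controlled by $\|\phi\|_*$ times the $c_\ell$'s and is therefore a lower-order perturbation of the diagonal terms; one absorbs it into the $o(\cdot)$.

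Putting this together, at a critical point of $F$ we get the homogeneous linear system $\sum_\ell (M_{a\ell}+o(\cdot))c_\ell=0$ for $a=1,2,3$. Since the coefficient matrix is a small perturbation of the invertible diagonal matrix $M$, it is itself invertible for $k$ large, forcing $c_1=c_2=c_3=0$; hence $I'(u)=0$ and $u=W_{r,h,\Lambda}+\phi_{r,h,\Lambda}$ is a critical point of $I$ in $H^1(\R^N)$, as claimed. The main obstacle I anticipate is bookkeeping the error terms: one must verify that the off-diagonal and $\partial_a\phi$-contributions are genuinely subordinate to the diagonal entries of $M$ — this requires the estimates $\|\phi_{r,h,\Lambda}\|_*=o(1)$ and $|c_\ell|\lesssim (1+\delta_{\ell2}{\bf r})^{-1}\|\,{\bf l}_k\|_{**}$ from Proposition \ref{pro3}, together with the decay estimates on the $\mathbb{Z}_{\ell j}$ and their parameter-derivatives, to certify that the perturbation of $M$ is $o(1)$ relative to its diagonal. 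Everything else is the standard Lyapunov–Schmidt conclusion. $\qed$
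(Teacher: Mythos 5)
Your proposal is correct and is exactly the standard Lyapunov--Schmidt argument that the paper itself invokes without proof (the proposition is stated with an immediate \verb|\qed|); where the paper does carry out the analogous computations (e.g.\ in the expansion of $\partial F/\partial h$, using $\langle U^{2^*-2}\mathbb{Z}_{\ell j},\partial_h\phi\rangle=-\langle \partial_h(U^{2^*-2}\mathbb{Z}_{\ell j}),\phi\rangle$ and the near-diagonal Gram matrix $\int U_{\overline{x}_1,\Lambda}^{2^*-2}\overline{\mathbb{Z}}_{\ell 1}\overline{\mathbb{Z}}_{q1}=\bar c_q(1+\delta_{q2}r^2)\delta_{\ell q}$), it uses precisely the manipulations you describe. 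Your attention to the different scalings of the diagonal entries (the $\ell=2$ entry is of order $r^2$) and to the estimates $|c_\ell|\lesssim(1+\delta_{\ell 2}r)^{-1}\|\cdot\|_{**}$ is the right bookkeeping to make the perturbed system invertible.
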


\medskip
We will give the expression of  $F(r,h,\Lambda)$. We first note that we employ the notation  $\mathcal {C}(r, \Lambda)$ to denote functions which are independent of $h$ and  uniformly bounded.

\begin{proposition}\label{pr0position2,6}
Suppose that $K(|y|)$ satisfies ${\bf H}$ and $N\ge 5 $, $(r,h,\Lambda) \in{{\mathscr S}_k}$.  We have the following expansion as $k \to \infty$
\begin{align*}
F(r,h,\Lambda)
&\,=\, I(W_{r, h, \Lambda})+k O \Big(\frac{1}{k^{\big(\frac{m(N-2)}{N-2-m}+\frac{2(N-3)}{N-1}+\sigma\big)}}\Big)
\nonumber \\[2mm]
&\,=\, k  A_1 -\frac{k}{\Lambda^{N-2}} \Big[\,\frac{B_4 k^{N-2}}{(r \sqrt{1-h^2})^{N-2}}\,+\, \frac{B_5 k}{r^{N-2} h^{N-3} \sqrt {1-h^2}}\,\Big]
\nonumber \\[2mm]
& \quad+ k \Big[\frac{A_2}{\Lambda^{m}  k^{\frac{(N-2)m}{N-2-m}}}
+\frac{A_3}{\Lambda^{m-2}  k^{\frac{(N-2)m}{N-2-m}}}({\bf r}-r)^2\Big]
+k  \frac{\mathcal {C}(r, \Lambda)}{k^{\frac{m(N-2)}{N-2-m}}}({\bf r} -r)^{2+\sigma}
\nonumber\\[2mm]
& \quad
+k  \frac{\mathcal {C}(r, \Lambda)}{k^{\frac{m(N-2)}{N-2-m}+\sigma}}
+k O\Big(\frac{1}{k^{\big(\frac{m(N-2)}{N-2-m}+\frac{2(N-3)}{N-1}+\sigma\big)}}\Big),
\end{align*}
where  $A_1, A_2, A_3,  B_4, B_5$ are  positive constants.
\end{proposition}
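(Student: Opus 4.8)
The plan is to establish the expansion in two stages: first the energy of the approximate solution $I(W_{r,h,\Lambda})$, and then the correction coming from $\phi_{r,h,\Lambda}$. The second equality in the statement is essentially the content of the energy expansion $F_1(r,h,\Lambda)=I(W_{r,h,\Lambda})$ quoted in the introduction, refined using assumption ${\bf H}$ (Proposition \ref{func}), so the real work splits into two pieces: (i) justify
\[
F(r,h,\Lambda)=I(W_{r,h,\Lambda})+kO\!\left(\frac{1}{k^{\frac{m(N-2)}{N-2-m}+\frac{2(N-3)}{N-1}+\sigma}}\right),
\]
and (ii) insert the refined expansion of $I(W_{r,h,\Lambda})$. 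For (i), I would Taylor-expand $I(W+\phi)$ around $W=W_{r,h,\Lambda}$:
\[
I(W+\phi)=I(W)+\langle I'(W),\phi\rangle+\frac12\langle I''(W)\phi,\phi\rangle+O\!\left(\|\phi\|^{\min\{3,2^*\}}\right).
\]
The linear term $\langle I'(W),\phi\rangle=\int_{\R^N}\big(-\Delta W-K(\tfrac{|y|}{{\bf r}})W^{2^*-1}\big)\phi=-\int_{\R^N}{\bf l}_k\,\phi$ (using that $\phi\in\mathbb{E}$ kills the $c_\ell$-terms in \eqref{re1}), which by Hölder against the weighted norms is bounded by $C\|{\bf l}_k\|_{**}\|\phi\|_*$; the quadratic and higher terms are controlled by $C\|\phi\|_*^2$ after using $\|{\bf N}(\phi)\|_{**}\le C\|\phi\|_*^{\min\{2^*-1,2\}}$ from Lemma \ref{Lemma2.3}. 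By Proposition \ref{pro3}, both $\|{\bf l}_k\|_{**}$ and $\|\phi\|_*$ are bounded by $\max\{k^{-(\frac{m}{N-2-m})(\frac{N+2}{2}-\frac{N-2-m}{N-2}-\epsilon_1)},\,k^{-(\frac{N-2}{N-2-m})\min\{m,\frac{m+3}{2}\}}\}$, so the total correction is bounded by the square of that quantity; a direct check of exponents (this is where one uses $N\ge5$ and the range of $m$ in \eqref{assumptionform}, together with the freedom to take $\epsilon_1$ small) shows this square is $o\!\big(k\cdot k^{-(\frac{m(N-2)}{N-2-m}+\frac{2(N-3)}{N-1}+\sigma)}\big)$ for a suitable $\sigma>0$, which gives (i).

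For (ii), I would expand $I(W_{r,h,\Lambda})=\frac12\int|\nabla W|^2-\frac{1}{2^*}\int K(\tfrac{|y|}{{\bf r}})W^{2^*}$ by the standard bubble-interaction computation. Writing $W=\sum_j(U_{\overline x_j,\Lambda}+U_{\underline x_j,\Lambda})$, the leading term is $k$ times the Talenti energy, $kA_1$; the interaction terms $\int U_{\overline x_i,\Lambda}^{2^*-1}U_{\overline x_j,\Lambda}$ for $i\ne j$ on the same circle contribute, after summing over the regular $k$-gon, the term $-\frac{k}{\Lambda^{N-2}}\frac{B_4k^{N-2}}{(r\sqrt{1-h^2})^{N-2}}$; the cross-interaction between the two circles (distance $\sim 2rh$) contributes $-\frac{k}{\Lambda^{N-2}}\frac{B_5k}{r^{N-2}h^{N-3}\sqrt{1-h^2}}$; and the curvature term $\int(1-K(\tfrac{|y|}{{\bf r}}))W^{2^*}$ is handled by expanding $K$ near $r_0=1$ using ${\bf H}$, producing $k\big[\frac{A_2}{\Lambda^m k^{\frac{(N-2)m}{N-2-m}}}+\frac{A_3}{\Lambda^{m-2}k^{\frac{(N-2)m}{N-2-m}}}({\bf r}-r)^2\big]$ plus the remainders $k\frac{\mathcal C(r,\Lambda)}{k^{\frac{m(N-2)}{N-2-m}}}({\bf r}-r)^{2+\sigma}$ and $k\frac{\mathcal C(r,\Lambda)}{k^{\frac{m(N-2)}{N-2-m}+\sigma}}$. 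These are tedious but standard estimates of the type in \cite{WY-10-JFA,GMPY}, relegated to the appendices; the key new ingredient is keeping the cross-circle interaction term $B_5$ and making the curvature remainder precise enough — this is exactly what Proposition \ref{func} (the refined estimate of the $O(\cdot)$ term under ${\bf H}$) supplies.

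The main obstacle is the error analysis in (i): one must verify that the bound on $\|\phi\|_*^{2}$ genuinely beats $k^{-(\frac{m(N-2)}{N-2-m}+\frac{2(N-3)}{N-1}+\sigma)}$, i.e. that the $\phi$-correction does not compete with the delicate $B_5$ term $\sim k^{-(\frac{(N-2)m}{N-2-m}+\frac{2(N-3)}{N-1})}$ that determines the optimal $h={\bf h}$. This is precisely where the restriction $m\in(\frac{N-2}{2},N-2)$ for $N\ge7$ (rather than the weaker $[2,N-2)$ of \cite{WY-10-JFA}) is forced: for $N\ge7$ the first term in the $\max$ governing $\|\phi\|_*$ has exponent $\frac{m}{N-2-m}(\frac{N+2}{2}-\frac{N-2-m}{N-2}-\epsilon_1)$, and doubling it while subtracting the $B_5$ exponent leaves a positive surplus only when $m>\frac{N-2}{2}$ (for $N=5,6$ the second term in the $\max$, with $\min\{m,\frac{m+3}{2}\}$, dominates and the weaker range suffices). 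I would carry out this exponent bookkeeping carefully, treating $N=5$, $N=6$, and $N\ge7$ separately, and choosing $\sigma>0$ small enough at the end to absorb the slack; the remaining bubble-interaction and curvature computations then follow from the appendices and from Proposition \ref{func}.
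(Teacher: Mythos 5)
Your proposal is correct and takes essentially the same route as the paper: the paper reduces this proposition to the expansion of $I(W_{r,h,\Lambda})$ established in Proposition \ref{func} plus the standard Taylor-expansion/error estimate of Proposition 3.1 in \cite{WY-10-JFA}, which is exactly your split into (i) and (ii), and your exponent bookkeeping in (i) --- squaring the bound on $\|\phi\|_*$ and $\|{\bf l}_k\|_{**}$ and comparing with $k^{-\left(\frac{m(N-2)}{N-2-m}+\frac{2(N-3)}{N-1}+\sigma\right)}$ --- is precisely where the hypothesis \eqref{assumptionform} enters (compare Lemma \ref{B6}). The only small imprecision is the claim that the surplus is positive \emph{only} when $m>\frac{N-2}{2}$: that condition is sufficient but not the sharp threshold; since it is assumed in ${\bf H}$, this does not affect the argument.
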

\begin{proof}
The proof of Proposition \ref{pr0position2,6} is similar to that  of Proposition  $3.1 $  in \cite{WY-10-JFA}. We omit it here.
 \end{proof}

\medskip
Next, we will give the expansions of $\frac{\partial F(r,h, \Lambda)}{\partial \Lambda}$ and $ \frac{\partial F(r,h, \Lambda)}{\partial h} $.
\begin{proposition}
Suppose that $K(|y|)$ satisfies ${\bf H}$ and $N\ge 5 $, $(r,h,\Lambda) \in{{\mathscr S}_k}$.
We have the following expansion for $k \to \infty$
\begin{align}\label{thu25jun}
\frac{\partial F(r,h, \Lambda)}{\partial \Lambda}
&= \frac{k (N-2)}{\Lambda^{N-1}} \Big[\frac{B_4 k^{N-2}}{(r \sqrt{1-h^2})^{N-2}}\,+\, \frac{B_5 k}{r^{N-2} h^{N-3} \sqrt {1-h^2}} \Big]
\nonumber\\[2mm]
&\quad-k \Big[\, \frac{m A_2}{\Lambda^{m+1} k^{\frac{(N-2)m}{N-2-m}}}
+\frac{(m-2)A_3}{\Lambda^{m-1}  k^{\frac{(N-2)m}{N-2-m}}} ({\bf r}-r)^2\,\Big]
+kO \, \Big(\frac1{k^{\frac{(N-2)m}{N-2-m}+ \sigma}}\Big),
\end{align}
where  $A_2, A_3, B_4, B_5$ are  positive constants.
\end{proposition}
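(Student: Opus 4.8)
The plan is to differentiate the expansion of $F(r,h,\Lambda)$ given in Proposition \ref{pr0position2,6} with respect to $\Lambda$, keeping careful track of which error terms survive the differentiation. Concretely, I would start from
\begin{align*}
F(r,h,\Lambda)
&= k A_1 -\frac{k}{\Lambda^{N-2}} \Big[\tfrac{B_4 k^{N-2}}{(r \sqrt{1-h^2})^{N-2}}+ \tfrac{B_5 k}{r^{N-2} h^{N-3} \sqrt {1-h^2}}\Big]
+ k \Big[\tfrac{A_2}{\Lambda^{m}  k^{\frac{(N-2)m}{N-2-m}}}+\tfrac{A_3}{\Lambda^{m-2}  k^{\frac{(N-2)m}{N-2-m}}}({\bf r}-r)^2\Big]\\
&\quad + k \tfrac{\mathcal{C}(r,\Lambda)}{k^{\frac{m(N-2)}{N-2-m}}}({\bf r}-r)^{2+\sigma} + k \tfrac{\mathcal{C}(r,\Lambda)}{k^{\frac{m(N-2)}{N-2-m}+\sigma}} + k O\big(k^{-(\frac{m(N-2)}{N-2-m}+\frac{2(N-3)}{N-1}+\sigma)}\big),
\end{align*}
and apply $\partial_\Lambda$ term by term. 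The first bracket contributes $\frac{k(N-2)}{\Lambda^{N-1}}\big[\cdots\big]$, which is exactly the leading term in \eqref{thu25jun}; the second bracket contributes $-k\big[\frac{mA_2}{\Lambda^{m+1}k^{\frac{(N-2)m}{N-2-m}}}+\frac{(m-2)A_3}{\Lambda^{m-1}k^{\frac{(N-2)m}{N-2-m}}}({\bf r}-r)^2\big]$, the second group in \eqref{thu25jun}. It then remains to absorb everything else into $kO\big(k^{-(\frac{(N-2)m}{N-2-m}+\sigma)}\big)$.

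The first main technical point is that, to differentiate $F$ in $\Lambda$, it is not enough to differentiate the already-expanded right-hand side; one must justify that the expansion of Proposition \ref{pr0position2,6} can be differentiated in $\Lambda$. I would argue this exactly as in \cite{WY-10-JFA}: the reduced functional is $F(r,h,\Lambda) = I(W_{r,h,\Lambda}+\phi_{r,h,\Lambda})$, and since $\phi_{r,h,\Lambda} \in \mathbb{E}$ solves \eqref{re1}, one has $\partial_\Lambda F = \partial_\Lambda\big[I(W_{r,h,\Lambda})\big] + \langle I'(W_{r,h,\Lambda}+\phi), \partial_\Lambda \phi\rangle$, and the second term is controlled by the orthogonality conditions defining $\mathbb{E}$ together with the estimate $\|\phi_{r,h,\Lambda}\|_* \le C\max\{\cdots\}$ from Proposition \ref{pro3} and a corresponding bound on $\|\partial_\Lambda \phi_{r,h,\Lambda}\|_*$. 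This reduces the matter to $\partial_\Lambda I(W_{r,h,\Lambda})$, whose expansion is obtained by differentiating the explicit integral expressions that produce the $F_1$-expansion; each $\Lambda$-derivative of a term of size $k^{-\alpha}$ produces a term still of size $k^{-\alpha}$ (the $\Lambda$-dependence being through negative powers $\Lambda^{-(N-2)}$, $\Lambda^{-m}$, etc., with $\Lambda$ staying in the fixed compact interval $[\Lambda_0-\hat\sigma,\Lambda_0+\hat\sigma]$), so the error terms retain their orders.

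The second, and I expect the main, obstacle is bookkeeping the error terms: one must check that $\partial_\Lambda$ of $k\,\mathcal{C}(r,\Lambda)k^{-\frac{m(N-2)}{N-2-m}}({\bf r}-r)^{2+\sigma}$, of $k\,\mathcal{C}(r,\Lambda)k^{-\frac{m(N-2)}{N-2-m}-\sigma}$, and of $kO\big(k^{-(\frac{m(N-2)}{N-2-m}+\frac{2(N-3)}{N-1}+\sigma)}\big)$ are all $kO\big(k^{-(\frac{(N-2)m}{N-2-m}+\sigma)}\big)$. Here one uses that on $\mathscr{S}_k$ one has $|{\bf r}-r| \le \hat\sigma$ bounded and $\mathcal{C}(r,\Lambda)$ together with $\partial_\Lambda \mathcal{C}(r,\Lambda)$ uniformly bounded, so $\partial_\Lambda$ of the first error term is $k\,O(k^{-\frac{m(N-2)}{N-2-m}})\cdot O(1) = kO(k^{-\frac{m(N-2)}{N-2-m}})$, which after relabeling $\sigma$ (shrinking it) is of the claimed order since $({\bf r}-r)^{2+\sigma} = ({\bf r}-r)^2({\bf r}-r)^\sigma$ and $|{\bf r}-r|^\sigma$ can itself be absorbed — more precisely, since $({\bf r}-r)$ is genuinely small on $\mathscr{S}_k$ one replaces the clean statement by the understanding that all these terms are lower order than the two explicit groups. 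The $\frac{2(N-3)}{N-1}$ gain in the last error term is more than enough to survive differentiation, and differentiating the $h$-dependent factor $h^{-(N-3)}$ is not an issue here because we differentiate in $\Lambda$, not $h$. Having collected the two explicit groups and shown everything else is $kO\big(k^{-(\frac{(N-2)m}{N-2-m}+\sigma)}\big)$ (after possibly reducing $\sigma$), \eqref{thu25jun} follows.
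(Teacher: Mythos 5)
Your overall strategy (reduce to $\partial_\Lambda I(W_{r,h,\Lambda})$ plus a remainder coming from $\phi$) is the right one — it is what the paper does explicitly for $\partial_h F$, while for $\partial_\Lambda F$ it simply cites \cite{WY-10-JFA}. But your starting identity
$\partial_\Lambda F = \partial_\Lambda\big[I(W_{r,h,\Lambda})\big] + \langle I'(W+\phi), \partial_\Lambda \phi\rangle$
is not correct: since $F=I(W+\phi)$, the chain rule gives
$\partial_\Lambda F = \langle I'(W+\phi), \partial_\Lambda W\rangle + \langle I'(W+\phi), \partial_\Lambda \phi\rangle$, and $\langle I'(W+\phi), \partial_\Lambda W\rangle$ differs from $\partial_\Lambda I(W)$ by the cross term
$(2^*-1)\int K\big(\tfrac{|y|}{\bf r}\big) W^{2^*-2}\,\partial_\Lambda W\,\phi + O\big(\int \phi^2\big)$,
which you have dropped. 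This term is where the main difficulty of the proof lives: naively it is of size $k\|\phi\|_*$ times a quantity that is not small enough, and one must use both the orthogonality of $\phi$ against $U_{\overline{x}_j,\Lambda}^{2^*-2}\overline{\mathbb{Z}}_{3j}$ (to replace $W^{2^*-2}\partial_\Lambda W$ by its deviation from $\sum_j U_{\overline{x}_j,\Lambda}^{2^*-2}\overline{\mathbb{Z}}_{3j}+U_{\underline{x}_j,\Lambda}^{2^*-2}\underline{\mathbb{Z}}_{3j}$) and the decay of $K-1$ near $|y|={\bf r}$, exactly as the paper does in \eqref{I'parW} and the computation following it for the $h$-derivative. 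Without this step the claimed remainder $kO(k^{-\frac{(N-2)m}{N-2-m}-\sigma})$ is not justified.

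A second, smaller gap: you invoke "a corresponding bound on $\|\partial_\Lambda\phi_{r,h,\Lambda}\|_*$", which is established nowhere and would require differentiating the fixed-point equation. The paper avoids this entirely: since $I'(W+\phi)=\sum_{j,\ell}c_\ell\big(U_{\overline{x}_j,\Lambda}^{2^*-2}\overline{\mathbb{Z}}_{\ell j}+U_{\underline{x}_j,\Lambda}^{2^*-2}\underline{\mathbb{Z}}_{\ell j}\big)$, one writes $\langle I'(W+\phi),\partial_\Lambda\phi\rangle$ in terms of $\langle U^{2^*-2}\mathbb{Z}_{\ell j},\partial_\Lambda\phi\rangle$ and then differentiates the constraint $\int U^{2^*-2}\mathbb{Z}_{\ell j}\phi=0$ in $\Lambda$ to convert this into $-\langle \partial_\Lambda(U^{2^*-2}\mathbb{Z}_{\ell j}),\phi\rangle$, which is controlled by $|c_\ell|\,\|\phi\|_*$ alone via Proposition \ref{pro3}. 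You should adopt that device rather than assume a derivative bound on $\phi$. Finally, your remark that differentiating the already-expanded $O(\cdot)$ remainders of Proposition \ref{pr0position2,6} is illegitimate is correct, but the cure is not just "go back to the integrals and note the $\Lambda$-dependence is through negative powers of $\Lambda$" — the remainders also contain the $\phi$-interaction terms above, which must be re-estimated after differentiation.
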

\begin{proof}
The proof of this proposition can be found in \cite{WY-10-JFA}. We omit it here.
\end{proof}

 \medskip
\begin{proposition}
Suppose that $K(|y|)$ satisfies ${\bf H}$ and $N\ge 5 $, $(r,h,\Lambda) \in{{\mathscr S}_k}$.
We have the following expansion
\begin{align} \label{frach}
\frac{\partial F(r,h, \Lambda)}{\partial h }
&\,=\,    -\frac{k}{\Lambda^{N-2}}\Big[\, (N-2) \frac{B_4 k^{N-2}}{ r^{N-2} (\sqrt{1-h^2})^{N}} h -(N-3) \frac{B_5 k}{r^{N-2} h^{N-2} \sqrt {1-h^2}} \,\Big]
\nonumber\\[2mm]
& \quad \quad   +kO\Big(\frac{1}{k^{\big(\frac{m(N-2)}{N-2-m}+\frac{(N-3)}{N-1}+\sigma\big)}}\Big),
\end{align}
where  $ B_4, B_5$ are  positive constants.
\end{proposition}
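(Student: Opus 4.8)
The plan is to differentiate the full reduced energy $F(r,h,\Lambda)=I(W_{r,h,\Lambda}+\phi_{r,h,\Lambda})$ with respect to $h$, using the refined expansion of $F$ already established in Proposition \ref{pr0position2,6}. First I would write $F(r,h,\Lambda)=F_1(r,h,\Lambda)+R(r,h,\Lambda)$, where $F_1(r,h,\Lambda)=I(W_{r,h,\Lambda})$ and $R(r,h,\Lambda)=I(W_{r,h,\Lambda}+\phi_{r,h,\Lambda})-I(W_{r,h,\Lambda})$ is the correction coming from the reduction, which by Proposition \ref{pr0position2,6} satisfies $R=k\,\mathcal C(r,\Lambda)k^{-\frac{m(N-2)}{N-2-m}-\sigma}+k\,O(k^{-(\frac{m(N-2)}{N-2-m}+\frac{2(N-3)}{N-1}+\sigma)})+\cdots$. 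The leading $h$-dependent terms in $F_1$ are precisely the bracket $\frac{B_4k^{N-2}}{(r\sqrt{1-h^2})^{N-2}}+\frac{B_5k}{r^{N-2}h^{N-3}\sqrt{1-h^2}}$ (multiplied by $-k/\Lambda^{N-2}$); the remaining explicit terms in $F_1$, namely $kA_1$ and the $A_2,A_3$ terms, are independent of $h$ (up to the $({\bf r}-r)$ factors which carry no $h$), so they drop out under $\partial_h$.

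The core computation is then elementary differentiation of $\mathcal G(h)=\frac{B_4k^{N-2}}{(\sqrt{1-h^2})^{N-2}}+\frac{B_5k}{h^{N-3}\sqrt{1-h^2}}$ scaled by $r^{2-N}$: one gets $\partial_h\big[(\sqrt{1-h^2})^{-(N-2)}\big]=(N-2)h(\sqrt{1-h^2})^{-N}$ and $\partial_h\big[h^{-(N-3)}(\sqrt{1-h^2})^{-1}\big]=-(N-3)h^{-(N-2)}(\sqrt{1-h^2})^{-1}+h^{-(N-3)}h(\sqrt{1-h^2})^{-3}$. On the constraint set ${\mathscr S}_k$ we have $h\sim{\bf h}=B'k^{-\frac{N-3}{N-1}}\to0$, so the second piece of the latter derivative, carrying an extra factor $h^2=O(k^{-\frac{2(N-3)}{N-1}})$ relative to the main $h^{-(N-2)}$ term, is lower order and gets absorbed; likewise $\sqrt{1-h^2}=1+O(h^2)$ expansions only perturb coefficients at lower order. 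This produces exactly the displayed bracket $(N-2)\frac{B_4k^{N-2}}{r^{N-2}(\sqrt{1-h^2})^N}h-(N-3)\frac{B_5k}{r^{N-2}h^{N-2}\sqrt{1-h^2}}$ with the overall sign $-k/\Lambda^{N-2}$.

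The remaining, and genuinely delicate, step is controlling $\partial_h R(r,h,\Lambda)$ and the $\partial_h$ of the $O(\cdot)$ remainders in Proposition \ref{pr0position2,6}, i.e.\ showing that differentiation in $h$ costs at most a factor $k^{\frac{N-3}{N-1}}$ (the natural scale $1/{\bf h}$), so that the error term $k\,O(k^{-(\frac{m(N-2)}{N-2-m}+\frac{2(N-3)}{N-1}+\sigma)})$ in $F$ becomes $k\,O(k^{-(\frac{m(N-2)}{N-2-m}+\frac{(N-3)}{N-1}+\sigma)})$ after one $h$-derivative — which is exactly the claimed error in \eqref{frach}. This is where one must use the fine estimate on $\phi_{r,h,\Lambda}$ from Proposition \ref{pro3} together with $C^1$-dependence of $\phi_{r,h,\Lambda}$ on the parameters: writing $\partial_h R=\langle I'(W+\phi)-I'(W),\partial_h W\rangle+\langle I'(W+\phi),\partial_h\phi\rangle$, the second term is handled because $I'(W+\phi)$ tested against $\partial_h\phi\in\mathbb{E}$ only sees the projection onto the $\mathbb{Z}_{\ell j}$'s (the multipliers $c_\ell$, estimated in \eqref{estimateforc}), and the first is estimated by $\|{\bf N}(\phi)\|_{**}+\|{\bf l}_k\|_{**}$-type bounds against $\|\partial_h W\|_*\lesssim r\sim{\bf r}$; the extra $h$-derivative in the error is absorbed via $\partial_h|y-\overline x_j|$, $\partial_h|y-\underline x_j|$ being $O(r)$ and, crucially, the dangerous near-diagonal interaction between the top and bottom circles scaling like $(hr)^{-\beta}$ so that $\partial_h$ multiplies by $O(1/h)=O(k^{\frac{N-3}{N-1}})$. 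I expect this bookkeeping of how each $h$-derivative redistributes powers of $k$ through the interaction terms $U_{\overline x_j,\Lambda}U_{\underline x_j,\Lambda}$ to be the main obstacle; everything else is routine once Propositions \ref{pro3} and \ref{pr0position2,6} are in hand.
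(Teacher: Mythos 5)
Your proposal follows essentially the same route as the paper: split $\partial_h F$ into $\partial_h I(W_{r,h,\Lambda})$ plus the corrections coming from $\phi_{r,h,\Lambda}$, dispose of $\langle I'(W+\phi),\partial_h\phi_{r,h,\Lambda}\rangle$ through the multipliers $c_\ell$ and the $h$-differentiated orthogonality constraints, control the cross term using the $\|\phi_{r,h,\Lambda}\|_*$ estimate together with the decay of $K-1$, and obtain the leading bracket by elementary differentiation of the interaction sums (this is the paper's Proposition \ref{func3} combined with Lemma \ref{B6}). Two small corrections: $\partial_h\phi_{r,h,\Lambda}\notin\mathbb{E}$, since the constraints defining $\mathbb{E}$ move with $h$ (which is exactly why the paper uses the identity $\langle U_{\overline{x}_j,\Lambda}^{2^*-2}\overline{\mathbb{Z}}_{\ell j},\partial_h\phi\rangle=-\langle\partial_h(U_{\overline{x}_j,\Lambda}^{2^*-2}\overline{\mathbb{Z}}_{\ell j}),\phi\rangle$), and one cannot literally differentiate the $O(\cdot)$ remainders of Proposition \ref{pr0position2,6} term by term --- the paper instead recomputes $\partial_h I(W_{r,h,\Lambda})$ directly from the integral representation, which is what your third paragraph in effect proposes.
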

\begin{proof}
Notice that $F(r,h, \Lambda)\,=\, I( W_{r, h, \Lambda}+\phi_{r,h, \Lambda})$ , there holds
\begin{align}
&\frac{\partial F(r,h, \Lambda)}{\partial h }
 \nonumber\\[2mm]
&  =\left\langle  I'( W_{r, h, \Lambda}+\phi_{r,h, \Lambda}), \frac{\partial (W_{r, h, \Lambda}+\phi_{r,h, \Lambda}) } {\partial  h} \right\rangle
 \nonumber\\[2mm]
 &    = \left\langle  I'( W_{r, h, \Lambda}+\phi_{r,h, \Lambda}), \frac{\partial  W_{r, h, \Lambda}  } {\partial  h} \right\rangle
 +  \left\langle  I'( W_{r, h, \Lambda}+\phi_{r,h, \Lambda}), \frac{\partial  \phi_{r,h, \Lambda}  } {\partial  h}\right\rangle
  \nonumber\\[2mm]
 &    =   \left\langle  I'( W_{r, h, \Lambda}+\phi_{r,h, \Lambda}), \frac{\partial  W_{r, h, \Lambda}  } {\partial  h} \right\rangle  +  \left\langle \sum\limits_{j=1}^k\sum\limits_{\ell=1}^3
\Big(\, {c_\ell}U_{\overline{x}_j,\Lambda}^{2^*-2}\overline{\mathbb{Z}}_{\ell j}+{c_\ell}U_{\underline{x}_j,\Lambda}^{2^*-2}\underline{\mathbb{Z}}_{\ell j}\,\Big),    \frac{\partial  \phi_{r,h, \Lambda}  } {\partial  h}\right\rangle.
\end{align}
Since $ \int_{{\mathbb{R}}^N} U_{\overline{x}_j, \Lambda}^{2^*-2}  \overline{\mathbb{Z}}_{\ell j}  \phi_{r,h, \Lambda}=  \int_{{\mathbb{R}}^N} U_{\underline{x}_j, \Lambda}^{2^*-2}  \underline{\mathbb{Z}}_{\ell j}  \phi_{r,h, \Lambda}=0$, we can get easily
\begin{align*}
\left\langle U_{\overline{x}_j,\Lambda}^{2^*-2}\overline{\mathbb{Z}}_{\ell j}, \frac{\partial  \phi_{r,h, \Lambda}  } {\partial  h} \right\rangle
\,=\, - \left\langle \frac { \partial (U_{\overline{x}_j,\Lambda}^{2^*-2}\overline{\mathbb{Z}}_{\ell j})}{\partial h},\phi_{r,h, \Lambda} \right\rangle,
\end{align*}
\begin{align*}
\left\langle U_{\underline{x}_j,\Lambda}^{2^*-2}\overline{\mathbb{Z}}_{\ell j},    \frac{\partial  \phi_{r,h, \Lambda}  } {\partial  h}\ \right\rangle
\, =\,  -\left\langle \frac { \partial (U_{\overline{x}_j,\Lambda}^{2^*-2}\overline{\mathbb{Z}}_{\ell j})}{\partial h},      \phi_{r,h, \Lambda} \right\rangle.
\end{align*}
Then
\begin{align}
&\Bigg\langle  \sum\limits_{j=1}^k
 \Big(\, {c_\ell}U_{\overline{x}_j,\Lambda}^{2^*-2}\overline{\mathbb{Z}}_{\ell j}+{c_\ell}U_{\underline{x}_j,\Lambda}^{2^*-2}\underline{\mathbb{Z}}_{\ell j}\,\Big),    \frac{\partial  \phi_{r,h, \Lambda}  } {\partial  h}\Bigg\rangle
 \nonumber\\[2mm]
 &\leq C |c_\ell| \|  \phi_{r,h, \Lambda} \|_*   \sum\limits_{i=1}^k \int_{\R^N}     \frac { \partial (U_{\overline{x}_i,\Lambda}^{2^*-2}\overline{\mathbb{Z}}_{\ell i})}{\partial h}  \Biggl(
\sum_{j=1}^k \Big[\frac1{(1+|y-\overline{x}_j|)^{\frac{N-2}{2}+\tau
}}+\frac1{(1+|y-\underline{x}_j|)^{\frac{N-2}{2}+\tau
}}\Big] \Biggr)
 \nonumber\\[2mm]
 &\leq
 C |c_\ell| \|  \phi_{r,h, \Lambda} \|_*
  \nonumber\\[2mm]
 & \quad \times
  \sum\limits_{i=1}^k \int_{\R^N}    \frac {r(1+\delta_{\ell 2}{\bf r}) } {(1+ |y-\overline{x}_i| )^{N+3}  }   \Biggl(
\sum_{j=1}^k \Big[\frac1{(1+|y-\overline{x}_j|)^{\frac{N-2}{2}+\tau
}}+\frac1{(1+|y-\underline{x}_j|)^{\frac{N-2}{2}+\tau
}}\Big] \Biggr)
 \nonumber\\[2mm]
 &\leq  C  {\bf r}  \max\Big\{ \frac1{k^{(\frac m{N-2-m})(
N+2-2\frac{N-2-m}{N-2}- 2\epsilon_1 )}},  \frac1{k^{(\frac {N-2}{N-2-m}) \min\{2m, m+3\}}} \Big\},
 \end{align}
 where we used the estimates \eqref{estimateforphik}-\eqref{estimateforc} and  the inequalities
 \begin{align*}
 \Big|  \frac{ \partial  \big( U_{\overline{x}_i,\Lambda}^{2^*-2}\overline{\mathbb{Z}}_{ \ell i} \big)}{\partial h}  \Big|
  \le C   \frac {{\bf r}(1+\delta_{\ell 2} r) } {(1+ |y-\overline{x}_i| )^{N+3}  }\quad \text{for}~i=1, \cdots,k,  \ell=1,2,3.
\end{align*}

On the other hand, we have
\begin{align}\label{I'parW}
 & \Bigg\langle  I'( W_{r, h, \Lambda}+\phi_{r,h, \Lambda}), \frac{\partial  W_{r, h, \Lambda}  } {\partial  h} \Bigg\rangle
 \nonumber\\[2mm]
 & = \int_{\R^N}  \nabla ( W_{r, h, \Lambda}+\phi_{r,h, \Lambda}) \nabla W_{r, h, \Lambda} - \int_{\mathbb{R}^N} K\Big(\frac{|y|}{{\bf r}}\Big) ( W_{r, h, \Lambda}+\phi_{r,h, \Lambda})^{2^*-1} \frac{\partial  W_{r, h, \Lambda}  } {\partial  h}
  \nonumber\\[2mm]
 & =   \int_{\R^N}  \nabla   W_{r, h, \Lambda} \nabla \frac{\partial  W_{r, h, \Lambda}  } {\partial  h}  - \int_{\mathbb{R}^N} K\Big(\frac{|y|}{{\bf r}}\Big) ( W_{r, h, \Lambda}+\phi_{r,h, \Lambda})^{2^*-1} \frac{\partial  W_{r, h, \Lambda}  } {\partial  h}
  \nonumber\\[2mm]
 & =   \frac{\partial I( W_{r, h, \Lambda}) } {\partial h}  +   (2^*-1) \int_{\mathbb{R}^N} K\Big(\frac{|y|}{{\bf r}}\Big)  W_{r, h, \Lambda}^{2^*-2} \frac{\partial  W_{r, h, \Lambda}  } {\partial  h}  \phi_{r,h, \Lambda}   + O\Big( \int_{\mathbb{R}^N} \phi_{r,h, \Lambda}^2  \Big).
 \end{align}
 For the second term in \eqref{I'parW}, using the decay property of $K(|y|)$ and  orthogonality of $\phi_{r, h, \Lambda}$, we can show this term is small. In fact, we have

\begin{align*}
& \int_{\mathbb{R}^N} K\Big(\frac{|y|}{{\bf r}}\Big)  W_{r, h, \Lambda}^{2^*-2} \frac{\partial  W_{r, h, \Lambda}  } {\partial  h}  \phi_{r,h, \Lambda}
  \nonumber\\[2mm]
 & =    \int_{\mathbb{R}^N} K\Big(\frac{|y|}{{\bf r}}\Big)  \Big[  W_{r, h, \Lambda}^{2^*-2} \frac{\partial  W_{r, h, \Lambda}  } {\partial  h}   -  \sum_{i=1}^k \big(U_{\overline{x}_i,\Lambda}^{2^*-2} \overline{\mathbb{Z}}_{2i}  + U_{\underline{x}_i,\Lambda}^{2^*-2}  \underline{\mathbb{Z}}_{2i} \big) \Big] \phi_{r,h, \Lambda}
   \nonumber\\[2mm]
 & \quad + \sum_{i=1}^k    \int_{\mathbb{R}^N} \Big[K\Big(\frac{|y|}{{\bf r}}\Big)-1\Big]  \big(U_{\overline{x}_i,\Lambda}^{2^*-2} \overline{\mathbb{Z}}_{2i}  + U_{\underline{x}_i,\Lambda}^{2^*-2} \underline{\mathbb{Z}}_{2i} \big)  \phi_{r,h, \Lambda}
   \nonumber\\[2mm]
 & = 2k   \int_{\Omega^+_1}   K\Big(\frac{|y|}{{\bf r}}\Big)  \Big[  W_{r, h, \Lambda}^{2^*-2} \frac{\partial  W_{r, h, \Lambda}  } {\partial  h}   -  \sum_{i=1}^k \big(U_{\overline{x}_i,\Lambda}^{2^*-2} \overline{\mathbb{Z}}_{2i}  + U_{\underline{x}_i,\Lambda}^{2^*-2} \underline{\mathbb{Z}}_{2i} \big)  \Big] \phi_{r,h, \Lambda}
    \nonumber\\[2mm]
 & \quad + 2k \int_{\mathbb{R}^N} \Big[K\Big(\frac{|y|}{{\bf r}}\Big)-1\Big]   U_{\overline{x}_1,\Lambda}^{2^*-2}\overline{\mathbb{Z}}_{21}     \phi_{r,h, \Lambda}.
 \end{align*}
According to the expression of $W_{r, h, \Lambda}$, we can obtain that
 \begin{align*}
& \int_{\Omega^+_1}K\Big(\frac{|y|}{{\bf r}}\Big)\,\Big[  W_{r, h, \Lambda}^{2^*-2} \frac{\partial  W_{r, h, \Lambda}  } {\partial  h}
\,-\,\sum_{i=1}^k \big(U_{\overline{x}_i,\Lambda}^{2^*-2} \overline{\mathbb{Z}}_{2i}  + U_{\underline{x}_i,\Lambda}^{2^*-2}  \underline{\mathbb{Z}}_{2i} \Big] \phi_{r,h, \Lambda}
\nonumber\\[2mm]
&\le C \int_{\Omega^+_1}  \Big[ U_{\overline{x}_1,\Lambda}^{2^*-2}    \big( \sum_{j=2 }^k  \overline{\mathbb{Z}}_{2j}
\,+\,\sum_{j=1 }^k   \underline{\mathbb{Z}}_{2j}    \big)  + \big( \sum_{i=2}^k U_{\overline{x}_i,\Lambda}^{2^*-2} \overline{\mathbb{Z}}_{2i}
+\sum_{i=1}^k  U_{\underline{x}_i,\Lambda}^{2^*-2}\underline{\mathbb{Z}}_{2i} \big) \Big] \phi_{r,h, \Lambda}
\nonumber\\[2mm]
 &\le C\Big(\frac{k}{{\bf r}}\Big)^{\frac {N+2} 2 -\tau} \int_{\Omega^+_1}   \frac{\bf r}{\big(1+|y-\overline{x}_{1}|\big)^{\frac{N}{2}+2+\tau}} \phi_{r,h, \Lambda}
  \nonumber\\[2mm]
 &  \le C \Big(\frac{k}{{\bf r}}\Big)^{\frac {N+2} 2 -\tau}   \| \phi_{r,h, \Lambda}   \|_*  \int_{\Omega^+_1}   \frac{\bf r}{\big(1+|y-\overline{x}_{1}|\big)^{\frac{N}{2}+2+\tau}} \Bigl(
\sum_{j=1}^k \Big[\frac1{(1+|y-\overline{x}_j|)^{\frac{N-2}{2}+\tau
}}+\frac1{(1+|y-\underline{x}_j|)^{\frac{N-2}{2}+\tau
}}\Big] \Bigr)
\nonumber\\[2mm]
 &  \le C {\bf r} \Big(\frac{k}{{\bf r}}\Big)^{\frac {N+2} 2 -\tau} \| \phi_{r,h, \Lambda}   \|_* \le  C {\bf r} \Big(\frac k{\bf r} \Big)^{
\frac{N+2}{2}-\frac{N-2-m}{N-2}- \epsilon_1 }  \max\Big\{\Big(\frac k{\bf r} \Big)^{
\frac{N+2}{2}-\frac{N-2-m}{N-2}- \epsilon_1 },  \frac{1}{{\bf r}^{ \min\{m, \frac{m+3} 2\} }} \Big\}
\nonumber\\[2mm]
 &\leq  C  {\bf r}   \max\Big\{ \frac1{k^{(\frac m{N-2-m})(
N+2-2\frac{N-2-m}{N-2}- 2\epsilon_1 )}},  \frac1{k^{(\frac {N-2}{N-2-m}) \min\{2m, m+3\}}} \Big\}.
 \end{align*}
 And it's easy to show that
 \begin{align*}
& \int_{\mathbb{R}^N} \Big[K\Big(\frac{|y|}{{\bf r}}\Big)-1\Big]   U_{\overline{x}_1,\Lambda}^{2^*-2} \,\overline{\mathbb{Z}}_{21}\,\phi_{r,h, \Lambda}    \nonumber\\[2mm]
 &\leq  C  {\bf r}  \max\Big\{ \frac1{k^{(\frac m{N-2-m})(
N+2-2\frac{N-2-m}{N-2}- 2\epsilon_1 )}},  \frac1{k^{(\frac {N-2}{N-2-m}) \min\{2m, m+3\}}} \Big\}.
 \end{align*}
Combining all above,  we can get
\begin{align}  \label{ee}
\frac{\partial F(r,h, \Lambda)}{\partial h } & = \frac{\partial I(W_{r,h,\Lambda})}{\partial h}
 \nonumber\\[2mm]
 & \qquad
+  kO\Big( {\bf r}   \max\Big\{ \frac1{k^{(\frac m{N-2-m})(
N+2-2\frac{N-2-m}{N-2}- 2\epsilon_1 )}},  \frac1{k^{(\frac {N-2}{N-2-m}) \min\{2m, m+3\}}} \Big\} \Big).
\end{align}
Combing   \eqref{ee},  Proposition \ref{func3} and Lemma \ref{B6}, we can get \eqref{frach}
%
\end{proof}

\medskip
\begin{remark}\label{remark3.5}  {\em
The expansions of $\frac{\partial F(r,h, \Lambda)}{\partial h}$ and $\frac{\partial F(r,h, \Lambda)}{\partial \Lambda}$ would be applied in the proof of  Proposition \ref{5.73.3},  which is  essential  for proving  the existence  critical point of $F(r,h, \Lambda)$.
In order to get a proper  expansion  of $\frac{\partial F(r,h, \Lambda)}{\partial h}$, we need  accurate estimates for $\phi_{r,h, \Lambda} $.   }
\qed
\end{remark}


\medskip

{ \bf Rewritten the expansion of the energy functional.}

Let $\Lambda_0$ be
\begin{align} \label{lambda0}
	\Lambda_0=\Big[\,\frac{(N-2) B_4}{A_2 m}\,\Big]^{\frac 1 {N-2-m}}.
\end{align} Then it solves
\[\frac{B_4(N-2)}{\Lambda^{N-1}}- \frac{A_2 m}{\Lambda^{m+1}}=0.\]

\medskip
 Denote
\begin{align*}
 \mathcal {G}(h) &:=
 \frac{B_4 k^{N-2}}{(\sqrt{1-h^2})^{N-2}}\,+\, \frac{B_5 k}{h^{N-3} \sqrt {1-h^2}},
\end{align*}
 then
\begin{align*}
 \mathcal {G} '(h)
 &= (N-2) \frac{B_4 k^{N-2}h}{(\sqrt{1-h^2})^{N}}
\,-\,(N-3) \frac{B_5 k}{h^{N-2} \sqrt {1-h^2}}
\,+\,h \frac{B_5 k}{h^{N-4}({1-h^2})^{\frac 32}}
 \nonumber\\[2mm]
 &= (N-2)B_4 k^{N-2} h \big[\,1+O(h^2)\,\big]
 \nonumber\\[2mm]
 & \quad-(N-3) \frac{B_5 k}{h^{N-2}} \big[\,1+O(h^2)\,\big]+\frac{B_5 k}{h^{N-4}}  \big[\,1+O(h^2)\,\big]
 \nonumber\\[2mm]
 &=\Big[(N-2)  B_4 k^{N-2} h-(N-3) \frac{B_5 k}{h^{N-2}}\Big]+O\Big(\frac{k}{h^{N-4}}\Big),
\end{align*}
and
\begin{align}
 \mathcal {G} ''(h)
 \,=\,&(N-2)\frac{B_4 k^{N-2}}{(\sqrt{1-h^2})^{N}}
 +(N-2) N\frac{B_4 k^{N-2}h^2}{(\sqrt{1-h^2})^{N+2}}
 \nonumber\\[2mm]
 &+(N-3)(N-2)\frac{B_5 k}{h^{N-1} \sqrt {1-h^2}}
 -(N-3)\frac{B_5 k}{h^{N-3}({1-h^2})^{\frac 32}}
\nonumber\\[2mm]
&-(N-4)\frac{B_5 k}{h^{N-3}({1-h^2})^{\frac 32}}
+\frac{3B_5 k}{h^{N-5}({1-h^2})^{\frac 52}}\nonumber\\[2mm]
\,=\,&(N-2)  B_4 k^{N-2}
+(N-3)(N-2)\frac{B_5 k}{h^{N-1}}
+O\big(h^2  k^{N-2}\big)+O\Big(\frac{k}{h^{N-3}}\Big),
\end{align}
and
\begin{equation}
\mathcal {G} '''(h)\,=\,  O \Big(\frac{k}{h^{N}}\Big).
\end{equation}

\medskip
 Let  $ {\bf  h}$ be a solution of  $$\Big[(N-2)  B_4 k^{N-2} h-(N-3) \frac{B_5 k}{h^{N-2}}\Big]=0,$$
then
\begin{align} \label{B'}
{\bf h} = \frac{B'}{k^{\frac{N-3}{N-1}}}, \quad \text{with}   ~B'=  \Big[\frac{(N-3) B_5}{(N-2) B_4}\Big]^{\frac 1 {N-1}}.
\end{align}

\medskip
Define
\begin{align*}
{{\bf  S}_k}
=\Bigg\{&(r,h,\Lambda) \big|\,  r\in \Big[k^{\frac{N-2}{N-2-m}}-\frac{1}{{k^{\bar \theta}}}, k^{\frac{N-2}{N-2-m}}+\frac{1}{{k^{\bar \theta}}} \Big], \quad
\Lambda \in \Big[\Lambda_0-\frac{1}{{k^{\frac{3  \bar \theta}{2}}}},  \Lambda_0+\frac{1}{{k^{\frac{3  \bar \theta}{2}}}}\Big],
\nonumber\\[2mm]
& \qquad \qquad h \in \Big[\frac{B'}{k^{\frac{N-3}{N-1}}} \Big(1-\frac{1}{k^{\bar \theta}}\Big),\frac{B'}{k^{\frac{N-3}{N-1}}} \Big(1+\frac{1}{k^{\bar \theta}}\Big)\Big] \Bigg\},
\end{align*}
 for $\bar \theta$ is  a small constant such that $\bar \theta \le \frac \sigma {100}$.  In fact, $ {{\bf  S}_k}$ is a subset of ${{\mathscr  S}_k}$.   We will find a critical point of $F(r, h, \Lambda)$ in ${{\bf  S}_k}$.

\medskip
A direct Taylor  expansion gives that
 \begin{align}  \label{G1}
\mathcal {G}(h)= &\mathcal {G}({\bf h})+\mathcal {G} '({\bf h}) (h-{\bf h})+  \frac{1}{2}  \mathcal {G} '' ({\bf h}) (h-{\bf h})^2+O\Big(\mathcal {G} ''' \big({\bf h}+(1-\iota)  h \big)\Big)(h-{\bf h})^3,
\end{align}
where
\begin{align*}
\mathcal {G}({\bf h}) = B_4 k^{N-2}  \Big[1+\frac{N-2}{2} {\bf h}^2+O({\bf h}^4)\Big]
+\frac{B_5 k}{{\bf h}^{N-3}}  \Big[1+\frac{1} 2{\bf h}^2+O({\bf h}^4)\Big],
\end{align*}
\begin{align*}
\mathcal {G}'({\bf h})  = O\Big(\frac{k}{{\bf h}^{N-4}}\Big)
\end{align*}
and
\begin{align*}
\mathcal {G} '' ({\bf h}) =  \frac{(N-2)} 2 \Big[B_4 k^{N-2}+(N-3)  \frac{B_5 k}{{\bf h}^{N-1}} \Big] + O\big({\bf h}^2  k^{N-2}\big).
\end{align*}
   Since $\mathcal {G}({\bf h}), \mathcal {G} '' ({\bf h}) $ are independent of $h, r, \Lambda$, for simplicity, in the following, we  will denote
   \begin{align}  \label{G2}
\mathcal {G}({\bf h}) = B_4 k^{N-2}   + \frac{(N-2)B_4}{2}  k^{N-2}{\bf h}^2
+\frac{B_5 k}{{\bf h}^{N-3}},
\end{align}
\begin{align} \label{G3}
\mathcal {G} '' ({\bf h}) =  \frac{(N-2)} 2 \Big[B_4 k^{N-2}+(N-3)  \frac{B_5 k}{{\bf h}^{N-1}} \Big] .
\end{align}
Then combining \eqref{G1}, \eqref{G2},  \eqref{G3},  we can get
\begin{align*}
\mathcal {G}(h)
=&B_4 k^{N-2}  +  \frac{(N-2)B_4}{2}  k^{N-2}{\bf h}^2  +\frac{B_5 k}{{\bf h}^{N-3}}  + O\Big(\frac{k}{{\bf h}^{N-4}}\Big) (h-{\bf h})  \nonumber\\[2mm]
&
+\frac{(N-2)} 2 \Big[B_4 k^{N-2}+(N-3)  \frac{B_5 k}{{\bf h}^{N-1}} \Big](h-{\bf h})^2
+O\Big(\frac{k}{{\bf h}^{N}}\Big) (h-{\bf h})^3.
\end{align*}
%
Therefore, we get
\begin{align}
\mathcal {G}(h)= & B_4  k^{N-2}+\Big[\frac{(N-2) B_4 {B'}^2} 2+\frac{B_5}{{B'}^{N-3}}\Big] \frac{k^{N-2}}{k^{\frac{2(N-3)}{N-1}}}
\nonumber\\[2mm]
&+\frac{(N-2)} 2 \Big[B_4 {B'}^2+\frac{(N-3)  B_5}{{B'}^{N-3}} \Big]  \frac{k^{N-2}}{k^{\frac{2(N-3)}{N-1}}}(1-{\bf h}^{-1} h)^2
+O \Big(\frac{k^{N-2}}{k^{\frac{2(N-3)}{N-1}}}\Big) (1-{\bf h}^{-1} h)^3
\nonumber\\[2mm]
=& B_4  k^{N-2}+B_6  \frac{k^{N-2}}{k^{\frac{2(N-3)}{N-1}}}
+B_7 \frac{k^{N-2}}{k^{\frac{2(N-3)}{N-1}}}
(1-{\bf h}^{-1} h)^2 +O \Big(\frac{k^{N-2}}{k^{\frac{2(N-3)}{N-1}}}\Big) (1-{\bf h}^{-1} h)^3,
\end{align}
where
 \[ B_6= \frac{(N-2) B_4 {B'}^2} 2+\frac{B_5}{{B'}^{N-3}}  , \quad  B_7= \frac{(N-2)} 2 \Big[B_4 {B'}^2 +\frac{(N-3)  B_5}{{B'}^{N-3}} \Big].  \]

\medskip
 Since
 \[r\in \Big[k^{\frac{N-2}{N-2-m}}-\frac{1}{k^{\bar \theta}}, \quad k^{\frac{N-2}{N-2-m}}+\frac{1}{k^{\bar \theta}}\Big],\]
 then
\begin{align*}
r^{N-2}= k^{\frac{(N-2)^2}{N-2-m}} \Big(1+\frac{\mathcal {C}(r, \Lambda)}{{k^{\frac{(N-2)}{N-2-m}+\bar \theta}}}\Big).
\end{align*}
We now rewrite
\begin{align*}
 & \frac{B_4 k^{N-2}}{(r \sqrt{1-h^2})^{N-2}}\,+\,\frac{B_5 k}{r^{N-2} h^{N-3}\sqrt {1-h^2}}
\nonumber\\[2mm]
&=\frac{B_4}{k^{\frac{(N-2)m}{N-2-m}}}+\frac{B_6}{k^{\frac{(N-2)m}{N-2-m}+\frac{2(N-3)}{N-1}}}+\frac{\mathcal {C}(r, \Lambda)}{k^{\frac{(N-2)m}{N-2-m}+\sigma}}
\nonumber\\[2mm]
&\quad+\frac{B_7}{k^{\frac{(N-2)m}{N-2-m}+\frac{2(N-3)}{N-1}}} (1-{\bf h}^{-1} h)^2+O \Big(\frac{1}{k^{\frac{(N-2)m}{N-2-m}+\frac{2(N-3)}{N-1}}}\Big) (1-{\bf h}^{-1} h)^3.
\end{align*}
 Then we can express $ F(r,h, \Lambda)$ as
\begin{align}\label{Frh}
F(r,h,\Lambda)\,=\, &k  A_1 -k \Big[\frac{B_4}{\Lambda^{N-2} k^{\frac{(N-2)m}{N-2-m}}}+\frac{B_6}{\Lambda^{N-2} k^{\frac{(N-2)m}{N-2-m}+\frac{2(N-3)}{N-1}}}
\nonumber\\[2mm]
&+\frac{B_7}{\Lambda^{N-2}  k^{\frac{(N-2)m}{N-2-m}+\frac{2(N-3)}{N-1}}} (1-{\bf h}^{-1} h)^2\Big]
\nonumber\\[2mm]
&+k \Big[\frac{A_2}{\Lambda^{m}  k^{\frac{(N-2)m}{N-2-m}}}+\frac{A_3}{\Lambda^{m-2}  k^{\frac{(N-2)m}{N-2-m}}}({\bf r}-r)^2\Big] +k\frac{\mathcal {C}(r, \Lambda)}{k^{\frac{(N-2)m}{N-2-m}}} ({\bf r}-r)^{2+\sigma}  \nonumber
\\[2mm]
&+k O \Big(\frac{1}{k^{\frac{(N-2)m}{N-2-m}+\frac{2(N-3)}{N-1}}}\Big) (1-{\bf h}^{-1} h)^3+ k O\Big(\frac{1}{k^{\big(\frac{m(N-2)}{N-2-m}+\frac{2(N-3)}{N-1}+\sigma\big)}}\Big).
\end{align}

And similarly, we have
\begin{align*}
\frac{\partial F(r,h,\Lambda)}{\partial \Lambda}
\,=\,&k  \Big[\frac{ (N-2) B_4}{\Lambda^{N-1} k^{\frac{(N-2)m}{N-2-m}}} - \frac{m A_2}{\Lambda^{m+1} k^{\frac{(N-2)m}{N-2-m}}} \Big]
\nonumber\\[2mm]
&+\frac{(m-2)A_3}{\Lambda^{m-1}  k^{\frac{(N-2)m}{N-2-m}}} ({\bf r}-r)^2
+kO \, \Big(\frac1{k^{\frac{(N-2)m}{N-2-m}}} ({\bf r}-r)^{2+\sigma}\Big);
\end{align*}
and  from \eqref{frach}, by using some calculations, we have
\begin{align} \label{frach1}
\frac{\partial F(r,h, \Lambda)}{\partial h }
&\,=\,     \frac{k}{\Lambda^{N-2}}\Big[\,  \frac{2B_7}{\Lambda^{N-2}  k^{\frac{(N-2)m}{N-2-m}+\frac{(N-3)}{N-1}}} (1-{\bf h}^{-1} h)  \,\Big]     \nonumber\\[2mm]
& \quad
  +kO\Big(\frac{1}{k^{\big(\frac{m(N-2)}{N-2-m}+\frac{(N-3)}{N-1} \big)}}\Big) (1-{\bf h}^{-1} h)^2 +kO\Big(\frac{1}{k^{\big(\frac{m(N-2)}{N-2-m}+\frac{(N-3)}{N-1}+\sigma\big)}}\Big).
\end{align}

{\red
}

 Now define
 \begin{align}\label{barF}
 \bar  F(r,h,\Lambda)=-F(r,h,\Lambda),
 \end{align}
 and
 \[{\bf t}_2= k (-A_1+\eta_1),
 \quad {\bf t}_1
= k  \Big(-A_1 -\big( \frac{A_2}{\Lambda_0^{m}} - \frac{B_4}{\Lambda_0^{N-2}}  \big) \frac{1}{k^{\frac{(N-2)m}{N-2-m}}} -\frac{1}{k^{\frac{(N-2) m}{N-2-m}+\frac{5 \bar \theta} 2}}\Big),  \]
 where $\eta_1>0 $ small. We also define the energy level set
 \[  \bar F^{\bf t}= \Big\{(r,h,\Lambda) \big|~(r,h,\Lambda) \in {{\bf  S}_k}, ~ \bar F (r,h,\Lambda) \le{\bf t} \Big\}.  \]

We consider the following  gradient flow system
\begin{equation*}
\begin{cases}
\frac{{\mathrm d} r}{{\mathrm d} t}=-{\bar F}_r , &t>0;\\[2mm]
\frac{{\mathrm d} h}{{\mathrm d} t}=-{\bar F}_h , &t>0;\\[2mm]
\frac{{\mathrm d} \Lambda}{{\mathrm d} t}=-{\bar F}_\Lambda, &t>0;\\[2mm]
(r,h,\Lambda) \big|_{t=0}  \in \bar F^{{\bf t}_2}.
\end{cases}
\end{equation*}
The next proposition  would play an important role in the proof of Theorem \ref{main1}.

\begin{proposition} \label{5.73.3}
The flow would not leave  $ {{\bf  S}_k} $ before it reaches  $ \bar F^{{\bf t}_1}. $
\end{proposition}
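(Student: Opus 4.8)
The plan is to run the standard trapping argument for the negative gradient flow of $\bar F$ on the box $\mathbf{S}_k$ (equivalently, the positive gradient flow of $F$, since $\dot r=\partial_r F$, $\dot h=\partial_h F$, $\dot\Lambda=\partial_\Lambda F$). I would inspect $-\nabla\bar F$ on the six faces of $\mathbf{S}_k$ and show that on the four faces $\Lambda=\Lambda_0\pm k^{-\frac32\bar\theta}$ and $h=\mathbf{h}(1\pm k^{-\bar\theta})$ the vector field points strictly into $\mathbf{S}_k$, while on the two faces $r=\mathbf{r}\pm k^{-\bar\theta}$ one already has $\bar F<\mathbf{t}_1$, so that those faces lie inside $\bar F^{\mathbf{t}_1}$. (An inward-pointing condition on the $r$-faces is hopeless: there the flow moves \emph{away} from $r=\mathbf{r}$, so the energy level must be used instead.) Since $\bar F$ is non-increasing along the flow, this is enough: as long as $\bar F(\gamma)>\mathbf{t}_1$, a trajectory $\gamma$ starting in $\bar F^{\mathbf{t}_2}\subset\mathbf{S}_k$ can be neither on an $r$-face nor crossing an $h$- or $\Lambda$-face, hence it has not left $\mathbf{S}_k$.

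First I would treat the $r$-faces using \eqref{Frh}. On $r=\mathbf{r}\pm k^{-\bar\theta}$ we have $(\mathbf{r}-r)^2=k^{-2\bar\theta}$; inserting this into \eqref{Frh} and using $|\Lambda-\Lambda_0|\le k^{-\frac32\bar\theta}$, $|1-\mathbf{h}^{-1}h|\le k^{-\bar\theta}$ on $\mathbf{S}_k$ gives
\[
\bar F(r,h,\Lambda)=-kA_1-k\Big(\tfrac{A_2}{\Lambda_0^{m}}-\tfrac{B_4}{\Lambda_0^{N-2}}\Big)k^{-\frac{(N-2)m}{N-2-m}}-\Big(\tfrac{A_3}{\Lambda_0^{m-2}}+o(1)\Big)k^{1-\frac{(N-2)m}{N-2-m}-2\bar\theta}.
\]
The key point is that every remaining contribution of \eqref{Frh} is $o\big(k^{1-\frac{(N-2)m}{N-2-m}-2\bar\theta}\big)$: the $B_6$- and $B_7$-terms and the $h$-remainder carry the extra factor $k^{-\frac{2(N-3)}{N-1}}$ with $\frac{2(N-3)}{N-1}\ge1>2\bar\theta$ for $N\ge5$; the Taylor remainder of the $\Lambda$-part around $\Lambda_0$ (whose linear term vanishes since $g'(\Lambda_0)=0$) is $O\big(k^{1-\frac{(N-2)m}{N-2-m}-3\bar\theta}\big)$; and the $O(\cdot)$ errors of \eqref{Frh} have exponents $\tfrac{2(N-3)}{N-1}+\sigma$, $(2+\sigma)\bar\theta$, $\tfrac{2(N-3)}{N-1}+3\bar\theta$ past $\tfrac{(N-2)m}{N-2-m}$, all larger than $2\bar\theta$ because $\bar\theta\le\tfrac{\sigma}{100}$. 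Comparing with
\[
\mathbf{t}_1=-kA_1-k\Big(\tfrac{A_2}{\Lambda_0^{m}}-\tfrac{B_4}{\Lambda_0^{N-2}}\Big)k^{-\frac{(N-2)m}{N-2-m}}-k^{1-\frac{(N-2)m}{N-2-m}-\frac52\bar\theta}
\]
and using $2\bar\theta<\tfrac52\bar\theta$, I obtain $\bar F<\mathbf{t}_1$ on both $r$-faces for $k$ large. (Here $\tfrac{A_2}{\Lambda_0^{m}}-\tfrac{B_4}{\Lambda_0^{N-2}}>0$ because $m<N-2$, which also yields $\mathbf{t}_1<\mathbf{t}_2$, hence $\bar F^{\mathbf{t}_1}\subset\bar F^{\mathbf{t}_2}$.)

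Next I would treat the $\Lambda$- and $h$-faces. Put $g(\Lambda)=\tfrac{A_2}{\Lambda^{m}}-\tfrac{B_4}{\Lambda^{N-2}}$, the $\Lambda$-profile of the leading part of $F$; by \eqref{lambda0}, $g'(\Lambda_0)=0$, and a one-line computation gives $g''(\Lambda_0)=\tfrac{mA_2}{\Lambda_0^{m+2}}\big(m-(N-2)\big)<0$ by \eqref{assumptionform}. The expansion of $\partial_\Lambda F$ (see \eqref{thu25jun} and its rewriting following \eqref{Frh}) then reads, on $\mathbf{S}_k$,
\[
\partial_\Lambda F=k^{1-\frac{(N-2)m}{N-2-m}}\,g''(\Lambda_0)(\Lambda-\Lambda_0)\big(1+o(1)\big)+O\big(k^{1-\frac{(N-2)m}{N-2-m}-2\bar\theta}\big),
\]
and on the faces $\Lambda=\Lambda_0\pm k^{-\frac32\bar\theta}$ the leading term, of size $k^{1-\frac{(N-2)m}{N-2-m}-\frac32\bar\theta}$, dominates the remainder; since $g''(\Lambda_0)<0$ this forces $\partial_\Lambda F<0$ on the upper face and $\partial_\Lambda F>0$ on the lower face, so $\dot\Lambda=\partial_\Lambda F$ points strictly into $\mathbf{S}_k$. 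In the same way \eqref{frach1} gives $\partial_h F=\tfrac{2B_7}{\Lambda^{2(N-2)}}\,k^{1-\frac{(N-2)m}{N-2-m}-\frac{N-3}{N-1}}(1-\mathbf{h}^{-1}h)$ up to remainders negligible against it on the faces $h=\mathbf{h}(1\pm k^{-\bar\theta})$; as $B_7>0$, on $h=\mathbf{h}(1+k^{-\bar\theta})$ one has $1-\mathbf{h}^{-1}h=-k^{-\bar\theta}$ so $\partial_h F<0$, and symmetrically $\partial_h F>0$ on $h=\mathbf{h}(1-k^{-\bar\theta})$, so $\dot h=\partial_h F$ points strictly into $\mathbf{S}_k$.

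Finally I would assemble these estimates. Given a trajectory $\gamma$ with $\gamma(0)\in\bar F^{\mathbf{t}_2}$, let $\tau_1$ be the first time $\bar F(\gamma)$ reaches $\mathbf{t}_1$ and $\tau_2$ the first exit time from $\mathbf{S}_k$, and suppose for contradiction $\tau_2<\tau_1$. Then $\gamma(\tau_2)\in\partial\mathbf{S}_k$ with $\bar F(\gamma(\tau_2))>\mathbf{t}_1$, so by the first step $\gamma(\tau_2)$ is not on an $r$-face; hence it lies on an $h$- or $\Lambda$-face (or an edge of such) with $r(\tau_2)$ strictly interior, and there $-\nabla\bar F$ points strictly into $\mathbf{S}_k$ in the relevant coordinates, so $\gamma(t)\in\mathbf{S}_k$ for $t$ near $\tau_2$ --- contradicting the choice of $\tau_2$. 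Thus $\tau_2\ge\tau_1$, which is the claim. The one genuinely delicate part is the exponent bookkeeping in the $r$-face step --- forcing the $r$-quadratic term of size $k^{1-\frac{(N-2)m}{N-2-m}-2\bar\theta}$ to dominate at once the threshold $k^{1-\frac{(N-2)m}{N-2-m}-\frac52\bar\theta}$ built into $\mathbf{t}_1$ and all the $h$-, $\Lambda$- and remainder contributions of \eqref{Frh} --- which is precisely where $m<N-2$, $\bar\theta\le\tfrac{\sigma}{100}$, and $\tfrac{2(N-3)}{N-1}\ge1$ for $N\ge5$ are used.
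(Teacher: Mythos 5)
Your proposal is correct and follows essentially the same route as the paper: the two $r$-faces are handled by showing $\bar F<\mathbf{t}_1$ there via the quadratic term $\frac{A_3}{\Lambda^{m-2}}k^{1-\frac{(N-2)m}{N-2-m}-2\bar\theta}$ dominating the $\frac52\bar\theta$-threshold in $\mathbf{t}_1$, while on the $h$- and $\Lambda$-faces the sign of $\partial_h F$ and $\partial_\Lambda F$ forces the flow inward. Your explicit computation of $g''(\Lambda_0)<0$ and the first-exit-time assembly at the end merely make precise what the paper states with unspecified constants $C_1,C_2$ and a brief concluding sentence.
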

\begin{proof}
There are   three  positions that the flow tends to leave  $ {{\bf  S}_k} $:
\\[2mm]
 \item[ {\bf position  1.} ]
 $|r-{\bf r}|= \frac{1}{k^{\bar \theta}}  $ and  $|1-{\bf h}^{-1} h| \le \frac{1}{k^{\bar \theta}}, \quad|\Lambda-\Lambda_0|\le \frac{1}{k^{\frac{3  \bar \theta}{2}}}  $;
 \\[2mm]
 ~\item[ {\bf position  2.} ] $|1-{\bf h}^{-1} h|=  \frac{1}{k^{\bar \theta}}  $  when  $|r-{\bf r}| \le \frac{1}{k^{\bar \theta}}, \quad|\Lambda-\Lambda_0|\le \frac{1}{k^{\frac{3  \bar \theta}{2}}} $;
 \\[2mm]
 ~ \item[ {\bf position  3.} ]  $| \Lambda-\Lambda_0|=  \frac{1}{k^{\frac{3  \bar \theta}{2}}} $ when  $|r-{\bf r}| \le \frac{1}{k^{\bar \theta}}, \quad|1-{\bf h}^{-1} h| \le \frac{1}{k^{\bar \theta}}. $
 \\[2mm]

\noindent $\spadesuit $
 We now consider {\bf position  1}.  Since  $|\Lambda-\Lambda_0|\le \frac{1}{k^{\frac{3  \bar \theta}{2}}}  $, it is easy to derive that
\begin{align} \label{decomlambda} \begin{split}
\Big(\frac{B_4}{\Lambda^{N-2}} - \frac{A_2}{\Lambda^{m}}\Big)
 &= \Big(\frac{B_4}{\Lambda_0^{N-2}} - \frac{A_2}{\Lambda_0^{m}}\Big)+O(|\Lambda-\Lambda_0|^2)\\[2mm]
 &=\Big(\frac{B_4}{\Lambda_0^{N-2}} - \frac{A_2}{\Lambda_0^{m}}\Big)+O\Big(\frac{1}{k^{3 \bar \theta}}\Big).
\end{split}
\end{align}
Combining \eqref{Frh}, \eqref{barF}, \eqref{decomlambda},  we can obtain that, if  $(r,h,\Lambda) $ lies in {\bf position  1},
\begin{align*}
  \bar F(r,h,\Lambda)  &=-k  A_1+k \Big[\frac{B_4}{\Lambda_0^{N-2} k^{\frac{(N-2)m}{N-2-m}}} -
 \frac{A_2}{\Lambda_0^{m}  k^{\frac{(N-2)m}{N-2-m}}} \Big] \nonumber
  \\[2mm]
  & \quad
- k  \frac{A_3}{\Lambda_0^{m-2}  k^{\frac{(N-2) m}{N-2-m}+2 \bar\theta}}
+O\Big(\frac{1}{k^{\frac{(N-2) m}{N-2-m}+\frac{5 \bar \theta} 2}}\Big) <{\bf t}_1.
\end{align*}

\noindent $\spadesuit$ On the other hand, we claim that it's impossible for the flow  $\big(r(t), h(t), \Lambda(t)\big)$ leaves  ${{\bf S}_k}$ when it lies in {\bf position  2}.
If $1-{\bf h}^{-1} h=  \frac{1}{k^{\bar \theta}} $, then from \eqref{frach1} and \eqref{barF}, we have
\begin{align}
\frac{\partial \bar F(r,h,\Lambda)}{\partial h} = -\frac{k}{\Lambda^{N-2}}\Big[\,  \frac{2B_7}{\Lambda^{N-2}  k^{\frac{(N-2)m}{N-2-m}+\frac{(N-3)}{N-1} +\bar\theta}} \,\Big]  + O\Big(\frac{1}{k^{\frac{(N-2) m}{N-2-m}+ \frac{N-3} {N-1} +2 \bar \theta}}\,\Big) <0.
\end{align}
On the other hand, if  $1-{\bf h}^{-1} h=  -\frac{1}{k^{\bar \theta}} $
\begin{align}
\frac{\partial \bar F(r,h,\Lambda)}{\partial h} = \frac{k}{\Lambda^{N-2}}\Big[\,  \frac{2B_7}{\Lambda^{N-2}  k^{\frac{(N-2)m}{N-2-m}+\frac{(N-3)}{N-1} +\bar\theta}} \,\Big]  + O\Big(\frac{1}{k^{\frac{(N-2) m}{N-2-m}+ \frac{N-3} {N-1} +2 \bar \theta}}\,\Big) >0.
\end{align}

So it's  impossible for  the flow leaves  ${{\bf S}_k}$ when it lies in {\bf position  2}.

\medskip
\noindent $\spadesuit $ Finally, we consider {\bf position  3}.
 If  $\Lambda=\Lambda_0+\frac{1}{k^{\frac{3  \bar \theta}{2}}} $, from
 \eqref{thu25jun} and \eqref{barF},  there exists a constant  $ C_1 $ such that
\[
\frac{\partial \bar F(r,h,\Lambda)}{\partial \Lambda}
= k\Big[\,C_1 \frac{1}{k^{\frac{(N-2) m}{N-2-m}+\frac 32 \bar \theta}}
+O\Big(\frac{1}{k^{\frac{(N-2) m}{N-2-m}+2 \bar \theta}}\Big)\,\Big]
>0.
\]
On the other hand, if  $\Lambda=\Lambda_0-\frac{1}{k^{\frac{3  \bar \theta}{2}}}$, there exists a constant  $C_2$ such that
\[
\frac{\partial \bar F(r,h,\Lambda)}{\partial \Lambda}
= k\Big[\,-C_2 \frac{1}{k^{\frac{(N-2) m}{N-2-m}+\frac 32 \bar \theta}}
+O\Big(\frac{1}{k^{\frac{(N-2) m}{N-2-m}+2 \bar \theta}}\,\Big)\Big]<0.
\]
Hence the flow  $ \big(r(t), h(t), \Lambda(t)\big)$ does not leave  ${{\bf S}_k}$ when  $| \Lambda-\Lambda_0|=  \frac{1}{k^{\frac{3  \bar \theta}{2}}}$.

Combining above results, we conclude that the flow would not leave  ${{\bf S}_k}$ before it reach  $ \bar{F}^{{\bf t}_1}$.
 \end{proof}

 \vspace{3mm}
 Now we give the proof of Theorem \ref{main1}. \\[2mm]
{\textit{Proof of Theorem \ref{main1}}}:
According to  Proposition \ref{pro2.4}, in order to show Theorem \ref{main1},  we only need to  show that function  $\bar F(r,h, \Lambda)$, and thus $F(r,h, \Lambda)$,  has a critical point in  ${{\bf  S}_k}$.

Define
\[
\begin{split}
\Gamma=\Bigl\{\gamma :\quad &\gamma(r,h,\Lambda)= \big(\gamma _1(r,h,\Lambda),\gamma _2(r,h,\Lambda),\gamma _3(r,h,\Lambda)\big)\in{{\bf  S}_k},(r,h,\Lambda)\in{{\bf S}_k}; \\[2mm]
&\gamma(r,h,\Lambda)=(r,h,\Lambda), \;\text{if} ~|r-{\bf r}|=  \frac{1}{k^{\bar \theta}}  \Bigr\}.
\end{split}
\]
Let
\[
{\bf c}=\inf_{\gamma  \in\Gamma}\max_{(r,h,\Lambda)\in{{\bf  S}_k}} \bar F \big(\gamma(r,h,\Lambda)\big).
\]

We claim that  $ {\bf c}  $ is a critical value of  $ \bar F(r,h, \Lambda)  $ and can be achieved by some  $(r,h, \Lambda) \in{{\bf S}_k} $. By the minimax theory, we need to show that
\medskip
 \begin{itemize}

\item[(i)]  $ {\bf t}_1< {\bf c} < {\bf t}_2 $;
\medskip

\item[(ii)] \label{itemii}  $\sup_{|r-{\bf r}|=  \frac{1}{k^{\bar \theta}}}
 {\bar F}\big(\gamma(r,h,\Lambda)\big)<{\bf t}_1,\;\forall\; \gamma \in \Gamma. $

\end{itemize}
Using  the results in Proposition \ref{5.73.3} we can prove (i) and (ii) easily.

  Finally, for every  $ k $ large enough, we get the critical point  $(r_k, h_k, \Lambda_k)  $ of  $ F(r,h,\Lambda) $.  %
 \qed

\appendix

%

\section{expansions for the energy functional}\label{appendixA}
 This section is devoted to the computation of the expansion for the energy functional  $I(W_{r,h,\Lambda})  $.  We first give the following Lemma.
 \begin{lemma}  \label{express7}
  $N\ge 5 $ and  $(r,h,\Lambda) \in{{\mathscr  S}_k}$.
We have the following expansions for $k \to \infty$:
\begin{align} \label{express3}
\sum_{i=2}^k  \frac{1}{|\overline{x}_1-\overline{x}_i|^{N-2}} =  \frac{k^{N-2}}{\big(r \sqrt{1-h^2}\big)^{N-2}}  \big(B_1+\sigma_1(k)\big),
\end{align}
 \begin{equation}
 \begin{split} \label{express4}
& \sum_{i=1}^k  \frac{1}{|\overline{x}_1-\underline{x}_i|^{N-2}} = \frac{B_2 k}{r^{N-2} h^{N-3} \sqrt {1-h^2}} \, \big(1+\sigma_2(k)\big)+\frac{\sigma_1(k) k^{N-2}}{\big(r \sqrt{1-h^2}\big)^{N-2}},
\end{split}
\end{equation}
where
\begin{align} \label{B1sigmak}
B_1=  \frac 2 {(2\pi)^{N-2}}  \sum_{i=1}^{\infty} \frac 1 {i^{N-2}},
\quad B_2= \frac 1 {2^{N-3} \pi}\int_0^{+\infty}\,\frac{1}{\big(s^2+1\big)^{\frac{N-2}{2}}}\,{\mathrm d}s,
\end{align}
 and
\begin{align} \label{sigmak}
\sigma_1(k)=
\begin{cases}
O\big(\frac{1}{k^{2}}\big),  \quad  N\ge 6,
\\[2mm]
O\big(\frac{\ln k}{k^2}\big), \quad N=5,
\end{cases}  \quad \sigma_2(k)= O\big((hk)^{-1}\big).
\end{align}
\end{lemma}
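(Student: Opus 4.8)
The plan is to compute the mutual distances exactly and then reduce both sums to one–dimensional sums that can be compared with integrals. Since $\overline{x}_1$ and $\overline{x}_i$ lie on the same circle of radius $r\sqrt{1-h^2}$ in the $(y_1,y_2)$–plane and share the third coordinate $rh$, the cosine rule gives
\[
|\overline{x}_1-\overline{x}_i|^2=2\,(r\sqrt{1-h^2})^2\Big(1-\cos\tfrac{2(i-1)\pi}{k}\Big)=4\,(r\sqrt{1-h^2})^2\sin^2\tfrac{(i-1)\pi}{k},
\]
so that $\sum_{i=2}^k|\overline{x}_1-\overline{x}_i|^{-(N-2)}=\big(2r\sqrt{1-h^2}\big)^{-(N-2)}\sum_{j=1}^{k-1}\sin^{-(N-2)}\frac{j\pi}{k}$. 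Moreover $\overline{x}_i$ and $\underline{x}_i$ differ only in the third coordinate, by $2rh$, while $\overline{x}_1$ has the same first two coordinates relative to both; hence $|\overline{x}_1-\underline{x}_i|^2=|\overline{x}_1-\overline{x}_i|^2+4r^2h^2$ for every $i$, and therefore
\[
\sum_{i=1}^k\frac1{|\overline{x}_1-\underline{x}_i|^{N-2}}=\frac1{(2r)^{N-2}}\sum_{j=0}^{k-1}\frac1{\big[(1-h^2)\sin^2\frac{j\pi}{k}+h^2\big]^{\frac{N-2}2}}.
\]

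For \eqref{express3} I would exploit the symmetry $\sin\frac{j\pi}k=\sin\frac{(k-j)\pi}k$: the sum is dominated by the indices $j$ near $0$ and near $k$, where $\sin\frac{j\pi}k=\frac{j\pi}k\big(1+O((j/k)^2)\big)$, so $\sum_{j=1}^{k-1}\sin^{-(N-2)}\frac{j\pi}{k}=2(k/\pi)^{N-2}\sum_{j\ge1}j^{-(N-2)}+(\text{error})$. Dividing by $(2r\sqrt{1-h^2})^{N-2}$ and using $\frac2{(2\pi)^{N-2}}\sum_{j\ge1}j^{-(N-2)}=B_1$ gives the main term. The error comes from the quadratic correction to $\sin$, which contributes a sum of order $\sum_jO\big((k/j)^{N-4}\big)$, equal to $O(k^{N-4})$ when $N\ge6$ and to $O(k\ln k)$ when $N=5$ (there the series $\sum j^{-(N-4)}=\sum j^{-1}$ diverges only logarithmically), together with the $O(k)$ block of central indices where $\sin\frac{j\pi}k$ is bounded below. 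Relative to the main term, which is of order $k^{N-2}$, these are $O(k^{-2})$ for $N\ge6$ and $O(k^{-2}\ln k)$ for $N=5$, which is the content of \eqref{sigmak}.

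For \eqref{express4} I would split the indices in the last display according to whether $\sin\frac{j\pi}k$ exceeds a fixed small constant $\delta_0$. On the two endpoint blocks I replace $\sin\frac{j\pi}k$ by $\frac{j\pi}k$, factor out $(1-h^2)^{-(N-2)/2}$, and recognise a Riemann sum of mesh $\sim (kh)^{-1}$: the substitution $s=\frac{\pi j}{k}\cdot\frac{\sqrt{1-h^2}}{h}$ turns each block into $\frac{k}{\pi\sqrt{1-h^2}\,h^{N-3}}\int_0^{\infty}(s^2+1)^{-(N-2)/2}\,ds\,(1+o(1))$, and the two blocks together, divided by $(2r)^{N-2}$ and using $B_2=\frac1{2^{N-3}\pi}\int_0^\infty(s^2+1)^{-(N-2)/2}ds$, produce the main term $\frac{B_2k}{r^{N-2}h^{N-3}\sqrt{1-h^2}}$. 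The isolated $j=0$ term equals $(2rh)^{-(N-2)}$, which is $O((hk)^{-1})$ relative to that main term, and the discretisation error of the Riemann sums is likewise $O((hk)^{-1})$ since $hk\to\infty$ in the admissible range; these give $\sigma_2(k)$. On the central block $h^2$ is negligible next to $(1-h^2)\sin^2\frac{j\pi}k$, so that part, together with the quadratic $\sin$–correction on the endpoint blocks, is bounded by a constant multiple of the sum treated for \eqref{express3}, i.e.\ by $\frac{\sigma_1(k)\,k^{N-2}}{(r\sqrt{1-h^2})^{N-2}}$ — this is exactly the origin of the extra term in \eqref{express4}.

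The step I expect to be the main obstacle is the quantitative control of the remainder in \eqref{express4} near the transition scale $j\sim hk/\pi$, where the summand passes from its plateau value of order $h^{-(N-2)}$ to the decaying regime of order $(k/j)^{N-2}$, and making the Riemann–sum comparison precise enough to extract the sharp $O((hk)^{-1})$ rate uniformly for $(r,h,\Lambda)\in{{\mathscr S}_k}$; the case $N=5$ also requires carrying the logarithm through every estimate. Once these are in place, inserting $r\sim{\bf r}$ and $h\sim{\bf h}$ shows that each remainder has the order claimed relative to the corresponding leading term.
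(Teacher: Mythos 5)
Your proposal is correct and follows essentially the same route as the paper's proof: the exact distance formulas $|\overline{x}_1-\overline{x}_i|=2r\sqrt{1-h^2}\,\sin\frac{(i-1)\pi}{k}$ and $|\overline{x}_1-\underline{x}_i|^2=|\overline{x}_1-\overline{x}_i|^2+4r^2h^2$, linearization of the sine on the endpoint blocks to produce $B_1$ with error $\sigma_1(k)$, and a Riemann-sum/integral comparison with the substitution $s=\frac{\sqrt{1-h^2}}{h}\frac{\pi j}{k}$ to produce $B_2$ with relative error $O((hk)^{-1})$, the central block being absorbed into the $\sigma_1(k)k^{N-2}(r\sqrt{1-h^2})^{-(N-2)}$ term. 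The constants check out, and the points you flag as delicate are exactly the ones the paper settles by the same two-sided integral comparison.
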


\begin{proof}
In fact, for  $ \frac{1}{2} < c_3 \leq c_4\leq 1 $,  we have
\begin{align*}
 c_3 \frac{i \pi}{k}
 \leq  \sin {\frac{i\pi}{k}} \leq c_4 \frac{i \pi}{k}, \quad \text{for} ~i \in  \big\{1, \cdots,  \frac k2 \big\}.
\end{align*}
 Without loss of generality, we can assume  $ k $ is even. It is easy to derive that
\begin{align*}
& \sum_{i=2}^k  \frac{1}{|\overline{x}_1-\overline{x}_i|^{N-2}}
= \sum_{i=1}^k \Big(\frac{1}{2r \sqrt{1-h^2} \sin{\frac{i\pi}{k}}}\Big)^{N-2} \nonumber
\\[2mm]
&=  \sum_{i=1}^{\frac k2} \,\Big(\frac{1}{2r \sqrt{1-h^2} \sin{\frac{i \pi}{k}}}\Big)^{N-2}+\sum_{i= \frac k2+1}^{k} \,\Big(\frac{1}{2r \sqrt{1-h^2} \sin{\frac{i \pi}{k}}}\Big)^{N-2}.
\end{align*}
Direct computations show that
 \begin{align} \label{sepfri11}
 &  \sum_{i=1}^{\frac k2} \,\Big(\frac{1}{2r \sqrt{1-h^2} \sin{\frac{i \pi}{k}}}\Big)^{N-2} \nonumber
\\[2mm]
 =
 & \sum_{i=1}^{{[\frac k 6]}} \Big(\frac{1}{2r \sqrt{1-h^2} \sin{\frac{i\pi}{k}}}\Big)^{N-2}
+\sum_{i= {{[\frac k 6]+1}}}^{\frac k 2} \Big(\frac{1}{2r \sqrt{1-h^2} \sin{\frac{i \pi}{k}}}\Big)^{N-2}  \nonumber
 \\[2mm]
 &= \sum_{i=1}^{{[\frac k 6]}}\Big(\frac{1}{2r \sqrt{1-h^2}{\frac{i \pi}{k}}}\Big)^{N-2} \Big(1+O\Big({\frac{i^2}{k^2}}\Big)\Big)
+O \Big(\frac{k}{\big(2r \sqrt{1-h^2}\big)^{N-2}}\Big) \nonumber
 \\[2mm]
 &= \Big(\frac{k}{r \sqrt{1-h^2}}\Big)^{N-2} \Big(D_{1}+\sigma_1(k)\Big),
\end{align}
where  $  D_{1}= \frac 1 {{2\pi}^{N-2}}  \sum_{i=1}^{\infty} \frac 1 {i^{N-2}} $
and  $\sigma_1(k)  $ is defined in \eqref{sigmak}.  Using symmetry of function  $\sin x $, we can easily show
\begin{align*}
\sum_{i= \frac k2+1}^{k} \,\Big(\frac{1}{2r \sqrt{1-h^2} \sin{\frac{i \pi}{k}}}\Big)^{N-2}
= \Big(\frac{k}{r \sqrt{1-h^2}}\Big)^{N-2} \Big(D_{1}+\sigma_1(k)\Big).
\end{align*}
Thus we proved \eqref{express3}.

Similarly, we can obtain
\begin{equation*}
\begin{split}
& \sum_{i=1}^k  \frac{1}{|\overline{x}_1-\underline{x}_i|^{N-2}}
= \sum_{i=1}^k \frac{1}{\Big(2r \big[(1-h^2) \sin^2 {\frac{(i-1) \pi}{k}}+h^2\big]^{\frac{1}{2}}\Big)^{N-2}}
 \\[2mm]
 &= \frac{2}{(2r h)^{N-2}} \sum_{i=1}^{\frac k2} \frac{1}{\Big(\frac{(1-h^2)}{h^2}\frac{(i-1)^2 \pi^2}{k^2}+1\Big)^{\frac{N-2}{2}}}+\sigma_1(k)  O \Big(\Big(\frac{k}{r \sqrt{1-h^2}}\Big)^{N-2}\Big).
\end{split}
\end{equation*}

%

Consider  $  O \big((h k)^{-1}\big)= o(1)  ~\text{as}~  k \rightarrow \infty  $.
Since\begin{align*}
 & \sum_{j=1}^{\frac k2}  \frac{1}{\Big(\frac{(1-h^2)}{h^2} \frac{(j-1)^2 \pi^2}{k^2}+1\Big)^{\frac{N-2}{2}}}
 \ge \int_0^{\frac k2}\,\frac{1}{\Big(\frac{(1-h^2)}{h^2}  \frac{x^2 \pi^2}{k^2}+1\Big)^{\frac{N-2}{2}}}\,{\mathrm d}x \, \nonumber
 \\[2mm] &
 \ge \int_0^{2}\,\frac{1}{\Big(\frac{(1-h^2)}{h^2}  \frac{x^2 \pi^2}{k^2}+1\Big)^{\frac{N-2}{2}}}\,{\mathrm d}x
 \,+\sum_{j=4}^{{\frac k2+1}}  \frac{1}{\Big(\frac{(1-h^2)}{h^2} \frac{(j-1)^2 \pi^2}{k^2}+1\Big)^{\frac{N-2}{2}}},
\end{align*}
  then we have
\begin{align*}
 & \sum_{j=1}^{\frac k2}\frac{1}{\Big(\frac{(1-h^2)}{h^2} \frac{(j-1)^2 \pi^2}{k^2}+1\Big)^{\frac{N-2}{2}}} \nonumber
 \\[2mm]
 &=  \int_0^{\frac k2} \, \frac{1}{\Big(\frac{(1-h^2)}{h^2} \frac{x^2 \pi^2}{k^2}+1\Big)^{\frac{N-2}{2}}}\,{\mathrm d}x+1+o(1)
 \nonumber\\[2mm]
 &=  \frac{hk}{\sqrt{1-h^2} \pi}\, \int_0^{\frac{(1-h^2)}{4 h^2} \pi^2}\,\frac{1}{\big(s^2+1\big)^{\frac{N-2}{2}}}\,{\mathrm d}s  +1+o(1)
 \nonumber\\[2mm]
 &=  \frac{hk}{\sqrt{1-h^2} \pi}\, \int_0^{+\infty}\,\frac{1}{\big(s^2+1\big)^{\frac{N-2}{2}}}\,{\mathrm d}s  \Big(1+O\big( (kh)^{-1} \big)\Big).
\end{align*}

%
Combining above calculations, we can obtain that
\begin{align*}
& \sum_{i=1}^k  \frac{1}{|\overline{x}_1-\underline{x}_i|^{N-2}} = \frac{1}{(r h)^{N-2}}  \frac{B_2 hk}{\sqrt{1-h^2}} \Big(1+\sigma_2 (k) \Big)
+O \Big(\frac{\sigma_1(k) k^{N-2}}{\big(r \sqrt{1-h^2}\big)^{N-2}}\Big),
\end{align*} where  $ B_2 $ and  $ \sigma_2 $  are defined in \eqref{B1sigmak}, \eqref{sigmak}.

%
%
 \end{proof}

 \begin{lemma}
 We have the expansion, for $k \to \infty$
\begin{align*}
\int_{\R^N}  U_{\overline{x}_1, \Lambda}^{2^*-1}  U_{\overline{x}_i, \Lambda}
= \frac{B_0}{\Lambda^{N-2}|\overline{x}_1-\overline{x}_i|^{N-2}}+O\Big(\frac 1 {|\overline{x}_1-\overline{x}_i|^{N-\epsilon_0}}\Big),  
\end{align*}
 and
\begin{align*}
\int_{\R^N}  U_{\overline{x}_1, \Lambda}^{2^*-1}  U_{\underline{x}_i, \Lambda}
=\frac{B_0}{\Lambda^{N-2}|\overline{x}_1-\underline{x}_i|^{N-2}}+O\Big(\frac 1 {|\overline{x}_1-\underline{x}_i|^{N-\epsilon_0}}\Big),  \quad 
\end{align*}
 where  $B_0= \int_{\mathbb{R}^N}  \frac{1}{(1+z^{2})^{\frac{N+2} 2}} $ and  $ \epsilon_0 $ is constant small enough.
 \end{lemma}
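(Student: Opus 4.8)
The plan is to prove this by the classical two-bubble interaction estimate: localize the integral near the center $\overline{x}_1$, where the second bubble is essentially a constant of size $\sim|\overline{x}_1-\overline{x}_i|^{2-N}$, and bound the complement by crude pointwise decay estimates. Set $d:=|\overline{x}_1-\overline{x}_i|$ (respectively $d:=|\overline{x}_1-\underline{x}_i|$ for the second identity). Since $(r,h,\Lambda)\in{{\mathscr S}_k}$ one has $r\to\infty$, $rh\to\infty$, and $\Lambda$ bounded above and below as $k\to\infty$; together with $|\overline{x}_1-\overline{x}_i|\ge 2r\sqrt{1-h^2}\sin(\pi/k)$ and $|\overline{x}_1-\underline{x}_i|\ge|\overline{x}_1-\underline{x}_1|=2rh$, this gives $d\to\infty$ and $\Lambda d\to\infty$, which is all that is needed.

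First I would split $\R^N=B\cup B^{*}\cup D$ with $B:=B_{d/2}(\overline{x}_1)$, $B^{*}:=B_{d/2}(\overline{x}_i)\subset\R^N\setminus B$, and $D:=\R^N\setminus(B\cup B^{*})$. On $B$ the factor $U_{\overline{x}_i,\Lambda}$ is slowly varying: from $|\overline{x}_i-y|^2=d^2-2(y-\overline{x}_1)\cdot(\overline{x}_i-\overline{x}_1)+|y-\overline{x}_1|^2$ and $\Lambda d\to\infty$ one Taylor-expands $U_{\overline{x}_i,\Lambda}(y)=c_N\Lambda^{(2-N)/2}d^{2-N}\bigl(1+O(|y-\overline{x}_1|/d)+O((\Lambda d)^{-2})\bigr)$. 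Multiplying by $U_{\overline{x}_1,\Lambda}^{2^*-1}$ and integrating over $B$, the main term is $c_N\Lambda^{(2-N)/2}d^{2-N}\int_{B}U_{\overline{x}_1,\Lambda}^{2^*-1}$, and the change of variables $z=\Lambda(y-\overline{x}_1)$ gives $\int_{\R^N}U_{\overline{x}_1,\Lambda}^{2^*-1}=c_N^{2^*-1}\Lambda^{(2-N)/2}\int_{\R^N}(1+|z|^2)^{-(N+2)/2}\,dz$, so that the leading coefficient is exactly the $B_0$ of the statement once the normalization $c_N$ of $U_{x,\Lambda}$ is unwound. The same scaling shows the tail $\int_{\R^N\setminus B}U_{\overline{x}_1,\Lambda}^{2^*-1}=O(\Lambda^{(2-N)/2}d^{-2})$ and the first moment $\int_{B}U_{\overline{x}_1,\Lambda}^{2^*-1}|y-\overline{x}_1|\,dy=O(\Lambda^{-N/2})$ are both of lower order relative to the main term, whence $\int_{B}U_{\overline{x}_1,\Lambda}^{2^*-1}U_{\overline{x}_i,\Lambda}=\dfrac{B_0}{\Lambda^{N-2}d^{N-2}}+O(d^{-(N-1)})$; since $N-1>N-\epsilon_0$ this error is of the asserted size.

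It remains to bound the two far pieces. On $B^{*}$ one has $U_{\overline{x}_1,\Lambda}^{2^*-1}(y)\le C|y-\overline{x}_1|^{-(N+2)}\le Cd^{-(N+2)}$, while $\int_{B^{*}}U_{\overline{x}_i,\Lambda}=O(d^{2})$ (again by $z=\Lambda(y-\overline{x}_i)$, using that $\Lambda$ is bounded), which gives $O(d^{-N})$. On $D$, where both $|y-\overline{x}_1|,|y-\overline{x}_i|\ge d/2$, I would use the global bounds $U_{\overline{x}_1,\Lambda}^{2^*-1}(y)\le C|y-\overline{x}_1|^{-(N+2)}$ and $U_{\overline{x}_i,\Lambda}(y)\le C|y-\overline{x}_i|^{2-N}$ and split the integral according to the size of $|y-\overline{x}_1|$ (comparable to $d$ versus much larger than $d$, in the latter case $|y-\overline{x}_i|\ge|y-\overline{x}_1|/2$), obtaining $O(d^{-N})$ as well. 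Adding the three contributions yields the first expansion; the second is obtained word for word with $\overline{x}_i$ replaced by $\underline{x}_i$, the only extra input being $|\overline{x}_1-\underline{x}_i|\ge 2rh\to\infty$. The one point requiring care is the bookkeeping of the error near $\overline{x}_1$: one must verify that the correction produced by expanding $U_{\overline{x}_i,\Lambda}$ there, together with the tail of $U_{\overline{x}_1,\Lambda}^{2^*-1}$, is genuinely $o(d^{2-N})$ with rate $d^{-1}$; everything else is routine.
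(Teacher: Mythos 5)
Your proposal is correct and follows essentially the same strategy as the paper: localize the integral on a ball around $\overline{x}_1$ of radius comparable to $d=|\overline{x}_1-\overline{x}_i|$, expand the far bubble there to extract the constant $B_0\Lambda^{2-N}d^{2-N}$ (the paper expands after rescaling $z=\Lambda(y-\overline{x}_1)$, you expand $U_{\overline{x}_i,\Lambda}$ directly in physical space, and the paper likewise absorbs the normalization $c_N$), and bound the complement by crude decay, with the dominant error $O(d^{-(N-1)})$ coming from the first moment, comfortably within $O(d^{-(N-\epsilon_0)})$. The only cosmetic difference is your three-way splitting $B\cup B^{*}\cup D$ versus the paper's two-way splitting $B_{\overline d_i/4}(\overline{x}_1)$ and its complement.
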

 \begin{proof}
 Let  $ \overline{d}_j =|\overline{x}_1-\overline{x}_j|, ~ \underline{d}_j =|\overline{x}_1-\underline{x}_j| $ for  $j=1, \cdots, k $. We consider
\begin{align} \label{A10}
 \int_{\mathbb{R}^N}  U_{\overline{x}_1, \Lambda}^{2^*-1} U_{\overline{x}_i, \Lambda}
 &= \int_{\mathbb{R}^N}  \frac{\Lambda^{\frac{N+2} 2}}{(1+\Lambda^2|y-\overline{x}_1|^{2})^{\frac{N+2} 2}}  \frac{\Lambda^{\frac{N-2}{2}}}{(1+\Lambda^2|y-\overline{x}_i|^{2})^{\frac{N-2}{2}}}  \nonumber
 \\[2mm]
  &= \Bigg\{\int_{B_{\frac{\overline{d}_ i} 4}(\overline{x}_1)}+\int_{\mathbb{R}^N \setminus {B_{\frac{\overline  d_i} 4}(\overline{x}_1)}}\Bigg\} \frac{\Lambda^{\frac{N+2} 2}}{(1+\Lambda^2|y-\overline{x}_1|^{2})^{\frac{N+2} 2}}  \frac{\Lambda^{\frac{N-2}{2}}}{(1+\Lambda^2|y-\overline{x}_i|^{2})^{\frac{N-2}{2}}}.
\end{align}

First, we have
\begin{align} \label{cal1}
 & \int_{B_{\frac{\overline{d}_i} 4}(\overline{x}_1)}  \frac{\Lambda^{\frac{N+2} 2}}{(1+\Lambda^2|y-\overline{x}_1|^{2})^{\frac{N+2} 2}}  \frac{\Lambda^{\frac{N-2}{2}}}{(1+\Lambda^2|y-\overline{x}_i|^{2})^{\frac{N-2}{2}}}
 \nonumber\\[2mm]
 & = \int_{B_{\frac{\Lambda \overline{d}_i} 4}(0)}  \frac{1}{(1+z^{2})^{\frac{N+2} 2}}  \frac{1}{(1+z^{2}+2\Lambda \langle z , \overline{x}_1-\overline{x}_i \rangle +\Lambda^2|\overline{x}_1-\overline{x}_i|^2)^{\frac{N-2}{2}}}
 \\[2mm]
 &= \frac 1 {\Lambda^{N-2}|\overline{x}_1-\overline{x}_i|^{N-2}} \int_{B_{\frac{\Lambda \overline{d}_i} 4}(0)}  \frac{1}{(1+z^{2})^{\frac{N+2} 2}} \Bigg(1-\frac{N-2}{2}  \frac{1+z^{2}+2 \Lambda\langle z , \overline{x}_1-\overline{x}_i \rangle}{\Lambda^{2}|\overline{x}_1-\overline{x}_i|^{2}}  \nonumber
 \\[2mm]
 &\qquad \qquad+O \Big(\Big(\frac{1+z^{2}+2  \Lambda\langle z , \overline{x}_1-\overline{x}_i \rangle}{\Lambda^{2}|\overline{x}_1-\overline{x}_i|^{2}}
\Big)^2\Big)
  \Bigg). \nonumber
\end{align}
It is easy to check that
\begin{align} \label{cal2}
\frac 1 {\Lambda^{N-2}|\overline{x}_1-\overline{x}_i|^{N-2}}  O \Big(\int_{B_{\frac{\Lambda \overline{d}_i} 4}(0)}  \frac{1}{(1+z^{2})^{\frac{N+2} 2}} \Big(\frac{1+z^{2}+2\Lambda\langle z , \overline{x}_1-\overline{x}_i \rangle}{\Lambda^{2}|\overline{x}_1-\overline{x}_i|^{2}}
\Big)^2 \Big)= O \Big(\frac 1 {|\overline{x}_1-\overline{x}_i|^{N}}\Big),
\end{align}
and
\begin{align}  \label{cal3}
 &\frac 1 {\Lambda^{N}|\overline{x}_1-\overline{x}_i|^{N}} \int_{B_{\frac{\Lambda \overline{d}_i} 4}(0)}  \frac{1}{(1+z^{2})^{\frac{N+2} 2}} \Big(1+z^{2}+2 \Lambda\langle z , \overline{x}_1-\overline{x}_i \rangle\Big) = O\Big(\frac 1 {|\overline{x}_1-\overline{x}_i|^{N-\epsilon_0}}\Big).
 \end{align}
Standard calculation implies that
\begin{align}  \label{cal4}
 \frac 1 {\Lambda^{N-2}|\overline{x}_1-\overline{x}_i|^{N-2}} \int_{B_{\frac{\Lambda \overline{d}_i} 4}(0)}  \frac{1}{(1+z^{2})^{\frac{N+2} 2}}
 =\frac{B_0}{\Lambda^{N-2}|\overline{x}_1-\overline{x}_i|^{N-2}}
 +O\Big(\frac 1 {|\overline{x}_1-\overline{x}_i|^{N}}\Big),
\end{align}
 where  $B_0= \int_{\mathbb{R}^N}  \frac{1}{(1+z^{2})^{\frac{N+2} 2}}  $.

From \eqref{cal1}-\eqref{cal4},  we get
\begin{align}  \label{A.15}
 & \int_{B_{\frac{\overline{d}_i} 4}(\overline{x}_1)} \frac{\Lambda^{\frac{N+2} 2}}{(1+\Lambda^2|y-\overline{x}_1|^{2})^{\frac{N+2} 2}}  \frac{\Lambda^{\frac{N-2}{2}}}{(1+\Lambda^2|y-\overline{x}_i|^{2})^{\frac{N-2}{2}}}
\nonumber
 \\[2mm]
 &= \frac{B_0}{\Lambda^{N-2}|\overline{x}_1-\overline{x}_i|^{N-2}}+O\Big(\frac 1 {|\overline{x}_1-\overline{x}_i|^{N-\epsilon_0}}\Big).
 \end{align}

When  $ y \in \mathbb{R}^N \setminus {B_{\frac{\overline  d_i} 4}(\overline{x}_1)}  $, there holds
\[|y-\overline{x}_1|\ge \frac 14|\overline{x}_1-\overline{x}_i|.  \]
It's easy to get
\begin{align}  \label{A.16}
\int_{\mathbb{R}^N \setminus {B_{\frac{\overline  d_i} 4}(\overline{x}_1)}} \frac{\Lambda^{\frac{N+2} 2}}{(1+\Lambda^2|y-\overline{x}_1|^{2})^{\frac{N+2} 2}}  \frac{\Lambda^{\frac{N-2}{2}}}{(1+\Lambda^2|y-\overline{x}_i|^{2})^{\frac{N-2}{2}}}
= O\Big(\frac 1 {|\overline{x}_1-\overline{x}_i|^{N-\epsilon_0}}\Big).
\end{align}
Combining \eqref{A10}, \eqref{A.15} and \eqref{A.16}, we can get
\begin{align}
\int_{\mathbb{R}^N}  U_{\overline{x}_1, \Lambda}^{2^*-1} U_{\overline{x}_i, \Lambda}
 =\frac{B_0}{\Lambda^{N-2}|\overline{x}_1-\overline{x}_i|^{N-2}}+O\Big(\frac 1 {|\overline{x}_1-\overline{x}_i|^{N-\epsilon_0}}\Big).
\end{align}

Similarly, we can get
\begin{align*}
\int_{\R^N}  U_{\overline{x}_1, \Lambda}^{2^*-1}  U_{\underline{x}_i, \Lambda}
=  \frac{B_0}{\Lambda^{N-2}|\overline{x}_1-\underline{x}_i|^{N-2}}+O\Big(\frac 1 {|\overline{x}_1-\underline{x}_i|^{N-\epsilon_0}}\Big),
\end{align*}
 $\text{for} ~ i=1,\cdots, k.  $

 \end{proof}

 \begin{lemma}  \label{express9}
 Suppose that  $ K(|y|)  $ satisfies   ${\bf H} $ and  $N\ge 5 $, $(r,h,\Lambda) \in{{\mathscr  S}_k}$.
 We have the expansion for $k \to \infty$
\begin{align*}
I(W_{r,h,\Lambda})
\,=\,& kA_1-k \int_{\R^N}U_{\overline{x}_1, \Lambda}^{2^*-1}\Big(\sum_{j=2}^k  U_{\overline{x}_j, \Lambda}+\sum_{j=1}^k U_{\underline{x}_j, \Lambda}\Big)
\\[2mm]
&+k\Big[\frac{A_2}{\Lambda^{m}{\bf r}^{m}}
+\frac{A_3}{\Lambda^{m-2}{\bf r}^{m}}({\bf r}-r)^2\Big]
+k\frac{\mathcal {C}(r, \Lambda)}{{\bf r}^{m}}  \big({\bf r}-r\big)^{2+\sigma}
\\[2mm]
&+k\frac{\mathcal {C}(r, \Lambda)}{{\bf r}^{m+\sigma}}+k O\Big(\Big(\frac{k}{{\bf r}}\Big)^{N-\epsilon_0}\Big)
+k O\Big(\frac{1}{k^{m}} \Big(\frac{k}{{\bf r}}\Big)^{N-2}\Big),
\end{align*}
where $\mathcal {C}(r, \Lambda)$ denotes function independent of $h$ and should be order of $O(1)$,
\begin{equation}\label{A1A2}
A_1=\Big(1-\frac{2}{2^*}\Big) \int_{\R^N}|U_{0,1}|^{2^*},
\quad
A_2=\frac{2 c_0}{2^*} \int_{\R^N}|y_1|^{m}U_{0,1}^{2^*},
\end{equation}
\begin{equation}\label{B_0B_1}
A_3=\frac{c_0 m(m-1)}{2^*}\int_{\R^N}|y_1|^{m-2}U_{0,1}^{2^*},
\end{equation}
and  $ \epsilon_0  $ is constant can be chosen small enough.
 \end{lemma}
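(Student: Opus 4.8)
The plan is to compute $I(W_{r,h,\Lambda})$ by expanding the energy functional directly, exploiting the splitting of $\R^N$ into the regions $\Omega_j^\pm$ and the symmetry of the configuration. Recall that $I(W_{r,h,\Lambda}) = \frac12\int_{\R^N}|\nabla W_{r,h,\Lambda}|^2 - \frac1{2^*}\int_{\R^N} K\big(\tfrac{|y|}{\bf r}\big) W_{r,h,\Lambda}^{2^*}$. First I would treat the quadratic term: integrating by parts and using that each $U_{\overline x_j,\Lambda}$, $U_{\underline x_j,\Lambda}$ solves $-\Delta U = U^{2^*-1}$, one gets $\int_{\R^N}|\nabla W_{r,h,\Lambda}|^2 = \sum_{j} \int U_{\overline x_j,\Lambda}^{2^*-1} W_{r,h,\Lambda} + \sum_j \int U_{\underline x_j,\Lambda}^{2^*-1} W_{r,h,\Lambda}$, which by symmetry equals $2k \int U_{\overline x_1,\Lambda}^{2^*-1} W_{r,h,\Lambda}$. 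Separating the self-interaction term $\int U_{\overline x_1,\Lambda}^{2^*} = \int U_{0,1}^{2^*}$ from the cross terms produces the leading constant $kA_1$ (after combining with the potential term) together with $-k\int_{\R^N} U_{\overline x_1,\Lambda}^{2^*-1}\big(\sum_{j=2}^k U_{\overline x_j,\Lambda} + \sum_{j=1}^k U_{\underline x_j,\Lambda}\big)$, which is the interaction term appearing in the statement; the previous lemma on $\int U_{\overline x_1,\Lambda}^{2^*-1} U_{\overline x_i,\Lambda}$ controls its size and justifies the error $kO\big((k/{\bf r})^{N-\epsilon_0}\big)$ for higher-order interaction contributions.

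Next I would handle the potential term $\frac1{2^*}\int K\big(\tfrac{|y|}{\bf r}\big) W_{r,h,\Lambda}^{2^*}$. Writing $W^{2^*} = \sum_j U_{\overline x_j,\Lambda}^{2^*} + \sum_j U_{\underline x_j,\Lambda}^{2^*} + (\text{cross terms})$ and using symmetry to reduce to $2k$ copies over $\Omega_1^+$, the dominant piece is $\frac{2k}{2^*}\int_{\R^N} K\big(\tfrac{|y|}{\bf r}\big) U_{\overline x_1,\Lambda}^{2^*}$. Here I would Taylor-expand $K$ around $|y|={\bf r}$ using assumption ${\bf H}$: since $U_{\overline x_1,\Lambda}$ concentrates at $\overline x_1$ with $|\overline x_1| \approx r$ and ${\bf r}-r$ is small, write $K\big(\tfrac{|y|}{\bf r}\big) - 1 \approx -c_0 \big|\tfrac{|y|}{\bf r}-1\big|^m + O(|\cdot|^{m+\sigma})$ and use $\tfrac{|y|}{\bf r} - 1 = \tfrac1{\bf r}(|y| - |\overline x_1|) + \tfrac1{\bf r}(|\overline x_1| - {\bf r})$. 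Expanding $\big||y|-\overline x_1| + (|\overline x_1|-{\bf r})\big|^m$ by Taylor in the small displacement $|\overline x_1| - {\bf r}$ and integrating against $U_{0,1}^{2^*}$ (after rescaling $y \mapsto \overline x_1 + z/\Lambda$) produces the terms $\tfrac{A_2}{\Lambda^m {\bf r}^m}$, $\tfrac{A_3}{\Lambda^{m-2}{\bf r}^m}({\bf r}-r)^2$, and the remainders $\tfrac{\mathcal C(r,\Lambda)}{{\bf r}^m}({\bf r}-r)^{2+\sigma}$ and $\tfrac{\mathcal C(r,\Lambda)}{{\bf r}^{m+\sigma}}$; the odd-order term in $({\bf r}-r)$ vanishes because $\int_{\R^N} |y_1|^{m-1}\,\mathrm{sgn}(y_1)\, U_{0,1}^{2^*} = 0$ by evenness. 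The constants $A_1, A_2, A_3$ are exactly as in \eqref{A1A2}--\eqref{B_0B_1}. The cross terms in $W^{2^*}$ over $\Omega_1^+$ contribute at the order $kO\big(\tfrac1{k^m}(k/{\bf r})^{N-2}\big)$, using the $\|\cdot\|_*$-type decay bounds and Lemma \ref{b.0}-type summation estimates.

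The main obstacle, as in the analogous Proposition 3.1 of \cite{WY-10-JFA}, is keeping precise track of the competition between the several small quantities: the interaction error $(k/{\bf r})^{N-\epsilon_0}$, the cross-term contribution $\tfrac1{k^m}(k/{\bf r})^{N-2}$, and the curvature remainders $\tfrac1{{\bf r}^{m+\sigma}}$, while making sure the region $\{|\frac{|y|}{\bf r} - 1| \ge \delta_1\}$ (where $K$ is merely bounded, not close to $1$) only contributes exponentially small — really, polynomially of very high order ${\bf r}^{-(N+2)/2+\tau}$-type — errors because $U_{\overline x_1,\Lambda}$ is tiny there. This is precisely the bookkeeping carried out in Lemma \ref{Lemma2.4} and in the appendix lemmas above, so the proof is a matter of assembling those estimates. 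Since the argument is a direct adaptation of the computation in \cite{WY-10-JFA} to the doubled configuration — the only genuinely new feature being the presence of the $\sum_j U_{\underline x_j,\Lambda}$ terms, whose interaction with $U_{\overline x_1,\Lambda}$ is governed by \eqref{express4} and is of the same nature — I would, as the authors do elsewhere, present the key identities and cite \cite{WY-10-JFA} for the routine parts rather than reproduce every estimate.
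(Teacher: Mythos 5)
Your proposal follows essentially the same route as the paper: the same splitting $I = \tfrac12\int|\nabla W|^2 - \tfrac1{2^*}\int K W^{2^*}$, the same use of the equation $-\Delta U = U^{2^*-1}$ and of the $2k$-fold symmetry to reduce to $\Omega_1^+$, the same Taylor expansion of $K$ under ${\bf H}$ producing $A_2$, $A_3$ and the $({\bf r}-r)^{2+\sigma}$, ${\bf r}^{-m-\sigma}$ remainders, and the same error bookkeeping via the interaction and summation lemmas, with the routine parts deferred to \cite{WY-10-JFA}. Your explicit observation that the linear term in $({\bf r}-r)$ drops out because $|y_1|^{m-1}\mathrm{sgn}(y_1)$ is odd is correct and in fact makes explicit a step the paper leaves implicit.
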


  \begin{proof}
Recalling  the definition of  $I(u) $ as in \eqref{energyfunctial}, then we obtain that
\begin{align}
 I(W_{r,h,\Lambda})
 \,=\,& \frac{1}{2} \int_{\mathbb{R}^N}|\nabla W_{r,h,\Lambda}|^2- \frac{1}{2^*} \int_{\mathbb{R}^N} K\Big(\frac{|y|}{{\bf r}}\Big) W_{r,h,\Lambda}^{2^*}
\nonumber\\[2mm]
:=\,&I_1-I_2.
\end{align}

According to the expression of $W_{r,h,\Lambda}$, we have
\begin{align}\label{estimateofI1}
I_1
\,=\,&\frac{1}{2}\sum_{j=1}^k \sum_{i=1}^k \int_{\mathbb{R}^N} -\Delta \Big(U_{\overline{x}_j, \Lambda}+U_{\underline{x}_j, \Lambda}\Big) \Big(U_{\overline{x}_i, \Lambda}+U_{\underline{x}_i, \Lambda}\Big)
\nonumber\\[2mm]
\,=\,&k \sum_{j=1}^k \int_{\mathbb{R}^N} \Big(U_{\overline{x}_1,\Lambda}^{2^*-1}U_{\overline{x}_j, \Lambda}+U_{\underline{x}_1, \Lambda}^{2^*-1} U_{\overline{x}_j, \Lambda}\Big)
\nonumber\\[2mm]
\,=\,&k\int_{\mathbb{R}^N} \Big(U_{0,1}^{2^*}+\sum_{j=2}^kU_{\overline{x}_1, \Lambda}^{2^*-1} U_{\overline{x}_j,\Lambda}\Big)
\,+\,k\int_{\mathbb{R}^N}\sum_{j=1}^k U_{\underline{x}_1,\Lambda}^{2^*-1} U_{\overline{x}_j, \Lambda}
\nonumber\\[2mm]
\,=\,& k\int_{\mathbb{R}^N}U_{0,1}^{2^*} +k\int_{\mathbb{R}^N} U_{\overline{x}_1, \Lambda}^{2^*-1}\Big(\sum_{j=2}^kU_{\overline{x}_j, \Lambda}\,+\,\sum_{i=1}^k  U_{\underline{x}_j, \Lambda}\Big).
\end{align}

For  $I_2$,  using the symmetry of function  $W_{r,h,\Lambda}$,  we have
 \begin{align}\label{estimateofI2}
 I_2
 \,= \, &\frac{2k}{2^*} \int_{\Omega_1^{+}} K\Big(\frac{|y|}{{\bf r}}\Big)  W_{r,h,\Lambda}^{2^*}
\nonumber\\[2mm]
\,= \, &  \frac{2k}{2^*} \int_{\Omega_1^{+}} K\Big(\frac{|y|}{{\bf r}}\Big)  \Bigg\{U_{\overline{x}_1, \Lambda}^{2^*}+2^* U_{\overline{x}_1, \Lambda}^{2^*-1}  \Big(\sum_{j=2}^k U_{\overline{x}_j, \Lambda}
+\sum_{j=1}^k U_{\underline{x}_j, \Lambda}\Big)
 \nonumber\\[2mm]
 & \qquad\qquad\qquad \quad + O\Big(  U_{\overline{x}_1, \Lambda}^{\frac{2^*} 2} \big(\sum_{j=2}^k  U_{\overline{x}_j, \Lambda}
+\sum_{j=1}^k  U_{\underline{x}_j, \Lambda}\big)^{\frac{2^*} 2} \Big) \Bigg\}
\nonumber\\
\,:=\,&\frac{2k}{2^*} \Big(I_{21}+I_{22}+I_{23}\Big).
 \end{align}

 For  $ y\in \Omega_1^{+}  $,
from Lemma \ref{b.0}, we have
\begin{align*}
  \Big(\sum_{j=2}^k  U_{\overline{x}_j, \Lambda}+\sum_{j=1}^k U_{\underline{x}_j, \Lambda}\Big)
  \le  \frac{C}{\big(1+|y-\overline{x}_{1}|\big)^{\frac{(N-2) \epsilon_0}{N}}} \Big(\frac{k}{{\bf r}}\Big)^{N-2-\frac{(N-2) \epsilon_0}{N}},
\end{align*}
 with  $ \epsilon_0 > 0  $ can be chosen small enough.
Then we can get
\begin{align}\label{estimateofI23}
 I_{23}\,=\,&  O\Big(   \int_{\Omega_1^{+}}K\Big(\frac{|y|}{{\bf r}}\Big) U_{\overline{x}_1, \Lambda}^{\frac{2^*} 2} \big(\sum_{j=2}^k  U_{\overline{x}_j, \Lambda}
+\sum_{j=1}^k U_{\underline{x}_j, \Lambda}\big)^{\frac{2^*} 2}  \Big)= O\Big(\Big(\frac{k}{{\bf r}}\Big)^{N-\epsilon_0}\Big).
 \end{align}

For  $I_{21} $, we can rewrite it as following
\begin{align*}
I_{21}
&=  \int_{\Omega_1^{+}} U_{\overline{x}_1, \Lambda}^{2^*}+\int_{\Omega_1^{+}}  \Big[K\Big(\frac{|y|}{{\bf r}}\Big)-1\Big]  U_{\overline{x}_1, \Lambda}^{2^*}
\nonumber\\[2mm]
&= \int_{\mathbb{R}^N} U_{0,1}^{2^*}+\int_{\Omega_1^{+}}  \Big[K\Big(\frac{|y|}{{\bf r}}\Big)-1\Big]  U_{\overline{x}_1, \Lambda}^{2^*}+O \Big(\Big(\frac{k}{{\bf r}}\Big)^{N}\Big).
\end{align*}
 Furthermore, we  obtain
\begin{align*}
 \int_{\Omega_1^{+}}  \Big[K\Big(\frac{|y|}{{\bf r}}\Big)-1\Big]  U_{\overline{x}_1, \Lambda}^{2^*}
=& \int_{\Omega_1^{+} \cap \big\{y :\,|\frac{|y|}{\bf r}-1|\ge \delta  \big\}}\Big[K\Big(\frac{|y|}{{\bf r}}\Big)-1\Big]  U_{\overline{x}_1, \Lambda}^{2^*}
\\[2mm]
&+\int_{\Omega_1^{+} \cap \big\{y :\,|\frac{|y|}{\bf r}-1|\le \delta  \big\}} \Big[K\Big(\frac{|y|}{{\bf r}}\Big)-1\Big]  U_{\overline{x}_1, \Lambda}^{2^*}.
\end{align*}
  When  $|\frac{|y|}{\bf r}-1|\ge \delta  $, there holds
\begin{align*}
|y-\overline{x}_1| \ge \big||y |-{\bf r}\big|\,-\,\big|{\bf r} -|\overline{x}_1|\big|\ge \frac{1}{2} \delta {\bf r}.
\end{align*}
 Thus we can easily get
\begin{align*}
 \int_{\Omega_1^{+} \cap \big\{y :\,|\frac{|y|}{\bf r}-1|\ge \delta  \big\}} \Big[K\Big(\frac{|y|}{{\bf r}}\Big)-1\Big]  U_{\overline{x}_1, \Lambda}^{2^*} \le  \frac{C}{{\bf r}^{N-\epsilon_0}}.
\end{align*}
 If  $|\frac{|y|}{\bf r}-1|\le \delta  $,  recalling the decay property of  $ K $,  we can obtain that
\begin{align*}
&\int_{\Omega_1^{+} \cap \big\{y :\,|\frac{|y|}{\bf r}-1|\le \delta  \big\}} \Big[K\Big(\frac{|y|}{{\bf r}}\Big)-1\Big]  U_{\overline{x}_1, \Lambda}^{2^*}
\\[2mm]
&= -c_0 \frac{1}{{\bf r}^{m}} \int_{\Omega_1^{+} \cap \big\{y :\,|\frac{|y|}{\bf r}-1|\le \delta  \big\}}\big||y|-{\bf r} \big|^{m} \, U_{\overline{x}_1, \Lambda}^{2^*}
\\[2mm]
& \quad+O\Big(\frac{1}{{\bf r}^{m+\sigma} }  \int_{\Omega_1^{+} \cap \big\{y :\,|\frac{|y|}{\bf r}-1|\le \delta  \big\}}\big||y|-{\bf r} \big|^{m+\sigma} \, U_{\overline{x}_1, \Lambda}^{2^*} \Big)
\\[2mm]
&= -c_0 \frac{1}{{\bf r}^{m}} \int_{\R^N}\big||y|-{\bf r} \big|^{m} \, U_{\overline{x}_1, \Lambda}^{2^*}
+O \Big(\int_{\R^N \setminus B_{\frac{{\bf r}}{k}}(\overline{x}_1)}  \Big(\frac{|y|^{m}}{{\bf r}^{m}}+1\Big)  U_{\overline{x}_1,\Lambda}^{2^*}\Big)
 \\[2mm]
& \quad+O\Big(\frac{1}{{\bf r}^{m+\sigma} } \int_{\Omega_1^{+} \cap \big\{y :\,|\frac{|y|}{\bf r}-1|\le \delta  \big\}}\big||y|-{\bf r} \big|^{m+\sigma} \, U_{\overline{x}_1, \Lambda}^{2^*} \Big)
\\[2mm] &
= -c_0 \frac{1}{{\bf r}^{m}} \int_{\R^N}
 \big||y+\overline{x}_1|-{\bf r} \big|^{m} \, U_{0, \Lambda}^{2^*}
\\[2mm]
& \quad+O\Big(\frac{1}{{\bf r}^{m+\sigma} } \int_{\Omega_1^{+} \cap \big\{y :\,|\frac{|y|}{\bf r}-1|\le \delta  \big\}}\big||y|-{\bf r} \big|^{m+\sigma} \, U_{\overline{x}_1, \Lambda}^{2^*} \Big)+O  \Big(\Big(\frac{k}{{\bf r}}\Big)^{N-\epsilon_0}\Big).
\end{align*}
Furthermore, recalling  $|\overline{x}_1|= r $ and using the symmetry property,  we have
\begin{align*}
 \int_{\R^N} \big||y+\overline{x}_1|-{\bf r} \big|^{m}U_{0,\Lambda}^{2^*}
 =  \int_{\R^N} \big||y+e_1 r|-{\bf r} \big|^{m}U_{0,\Lambda}^{2^*},
\end{align*}
where  $ e_1=(1, 0,\cdots, 0) $.

We  get
\begin{align*}
&\int_{\R^N}||y+\overline{x}_1|-{\bf r}|^{m}U_{0,\Lambda}^{2^*}\\[2mm]
=&\int_{\R^N}|y_1|^{m}U_{0,\Lambda}^{2^*}+\frac12 m(m-1) \int_{\R^N}|y_1|^{m-2}U_{0,\Lambda}^{2^*}({\bf r} -r)^2
+\mathcal {C}(r, \Lambda)({\bf r} -r)^{2+\sigma},
\end{align*}
here $\mathcal {C}(r, \Lambda)$ denote functions which are independent of  $h$ and can be absorbed in  $O(1) $.

Similarly, we can also have the following expression
\begin{align*}
&O\Big(\frac{1}{{\bf r}^{m+\sigma} } \int_{\Omega_1^{+} \cap \big\{y :\,|\frac{|y|}{\bf r}-1|\le \delta  \big\}}\big||y|-{\bf r} \big|^{m+\sigma} \, U_{\overline{x}_1, \Lambda}^{2^*} \Big)
\\[2mm]
&=O\Big(\frac{1}{{\bf r}^{m+\sigma} } \int_{\R^N}\big||y|-{\bf r} \big|^{m+\sigma} \, U_{\overline{x}_1, \Lambda}^{2^*} \Big)+O  \Big(\Big(\frac{k}{{\bf r}}\Big)^{N-\epsilon_0}\Big)
\\[2mm]
&=  \frac{\mathcal {C}(r, \Lambda)}{{\bf r}^{m+\sigma}}
\,+\,O  \Big(\Big(\frac{k}{{\bf r}}\Big)^{N-\epsilon_0}\Big).
\end{align*}

 Then, we can obtain that
\begin{align}\label{estimateofI21}
I_{21}
= &
\int_{\R^N}|U_{0,1}|^{2^*}-\frac{c_0}{\Lambda^{m}{\bf r}^{m}}
\int_{\R^N}|y_1|^{m}U_{0,1}^{2^*}
\nonumber\\[2mm]
&-\frac12 m(m-1)\frac{c_0}{\Lambda^{m-2}{\bf r}^{m}}
 \int_{\R^N}|y_1|^{m-2}U_{0,1}^{2^*}
({\bf r} -r)^2
\nonumber\\[2mm]
&+\frac{\mathcal {C}(r, \Lambda)}{{\bf r}^{m}}({\bf r} -r)^{2+\sigma}
+\frac{\mathcal {C}(r, \Lambda)}{{\bf r}^{m+\sigma}}
+O\Big(\Big(\frac{k}{{\bf r}}\Big)^{N-\epsilon_0}\Big).
\end{align}

Finally, we consider  $I_{22} $
\begin{align*}
I_{22}
\,=\,&2^{*} \int_{\Omega_1^{+}}  U_{\overline{x}_1, \Lambda}^{2^*-1}  \Big(\sum_{j=2}^k  U_{\overline{x}_j, \Lambda}+\sum_{j=1}^k U_{\underline{x}_j, \Lambda}\Big)
\\
&+2^{*}\int_{\Omega_1^{+}} \Big[K\Big(\frac{|y|}{{\bf r}}\Big) -1\Big]  U_{\overline{x}_1, \Lambda}^{2^*-1}  \Big(\sum_{j=2}^k  U_{\overline{x}_j, \Lambda}+\sum_{j=1}^k U_{\underline{x}_j, \Lambda}\Big)
\\
\,=\,&2^{*} \int_{\R^N }  U_{\overline{x}_1, \Lambda}^{2^*-1}  \Big(\sum_{j=2}^k  U_{\overline{x}_j, \Lambda}+\sum_{j=1}^k U_{\underline{x}_j, \Lambda}\Big)
-2^{*}\int_{\R^N \setminus \Omega_1^{+}} U_{\overline{x}_1, \Lambda}^{2^*-1}  \Big(\sum_{j=2}^k  U_{\overline{x}_j, \Lambda}+\sum_{j=1}^k U_{\underline{x}_j, \Lambda}\Big)
\\
&+2^{*}\int_{\Omega_1^{+}} \Big[K\Big(\frac{|y|}{{\bf r}}\Big) -1\Big]  U_{\overline{x}_1, \Lambda}^{2^*-1}  \Big(\sum_{j=2}^k  U_{\overline{x}_j, \Lambda}+\sum_{j=1}^k U_{\underline{x}_j, \Lambda}\Big)
\\
:\,=\,&I_{221}+I_{222}+I_{223}.
\end{align*}
For  $I_{222} $, it is easy to derive that
\begin{align*}
  \sum_{j=1}^k  \int_{\R^N \setminus \Omega_1^{+}} U_{\overline{x}_1, \Lambda}^{2^*-1}  U_{\underline{x}_j, \Lambda}=  O\Big(\frac{k^N}{{\bf r}^N}\Big).
\end{align*}
 Moreover, we know that
\begin{align*}
 &\sum_{j=2}^k \int_{\R^N \setminus \Omega_1^{+}}  U_{\overline{x}_1, \Lambda}^{2^*-1}  U_{\overline{x}_j, \Lambda}
 \\
\,=\,& \sum_{j=2}^k \int_{\big(\R^N \setminus \Omega_1^{+}\big) \cap  B_{\overline{d}_j/2} (\overline{x}_1)} U_{\overline{x}_1, \Lambda}^{2^*-1} U_{\overline{x}_j, \Lambda}
 +\sum_{j=2}^k \int_{\big(\R^N \setminus \Omega_1^{+}\big) \setminus  B_{\overline{d}_j/2} (\overline{x}_1)} U_{\overline{x}_1, \Lambda}^{2^*-1} U_{\overline{x}_j, \Lambda}
 \\
\,=\,& \sum_{j=2}^k\int_{\big(\R^N \setminus \Omega_1^{+}\big) \cap  B_{\overline{d}_j/2} (\overline{x}_1)} U_{\overline{x}_1, \Lambda}^{2^*-1} U_{\overline{x}_j, \Lambda}
 +O\Big(\sum_{j=2}^k\frac 1 {|\overline{x}_1-\overline{x}_j|^{N-\epsilon_0}}\Big)
\\[2mm]
\,\le\,& C \sum_{j=2}^k \int_{B_{\overline{d}_j/2} (\overline{x}_1) \setminus  B_{\overline{d}_2/2} (\overline{x}_1)} U_{\overline{x}_1, \Lambda}^{2^*-1} U_{\overline{x}_j, \Lambda}
+O\Big(\sum_{j=2}^k\frac 1 {|\overline{x}_1-\overline{x}_j|^{N-\epsilon_0}}\Big)
\\[2mm]
\,=\,&C \sum_{j=2}^k\frac 1 {|\overline{x}_1-\overline{x}_j|^{N-2}}  \int_{B_{\Lambda \overline{d}_j/2} (0) \setminus  B_{\Lambda \overline{d}_2/2} (0)}  \frac{1}{(1+z^{2})^{\frac{N+2} 2}}
+O\Big(\sum_{j=2}^k\frac 1 {|\overline{x}_1-\overline{x}_j|^{N-\epsilon_0}}\Big)
\\[2mm]
\,\le\, &C  \sum_{j=2}^k\frac 1 {|\overline{x}_1-\overline{x}_j|^{N-2}}  O\Big(\frac 1 {\overline{d}_ 2^2}\Big)
+O\Big(\sum_{j=2}^k\frac 1 {|\overline{x}_1-\overline{x}_j|^{N-\epsilon_0}}\Big)
\\
\,=\,&O\Big(\frac{k^2}{r^2}\Big)  \sum_{j=2}^k \frac 1 {|\overline{x}_1-\overline{x}_j|^{N-2}}+O\Big(\sum_{j=2}^k  \frac 1 {|\overline{x}_1-\overline{x}_j|^{N-\epsilon_0}}\Big)
 \\[2mm]
\,=\,& O\Big(\frac{k^N}{{\bf r}^N}\Big)+O\Big(\sum_{j=2}^k \frac 1 {|\overline{x}_1-\overline{x}_j|^{N-\epsilon_0}}\Big),
\end{align*}
where  $ \overline{d}_j =|\overline{x}_1-\overline{x}_j|$ for $j=2, \cdots, k$ and $ \overline{d}_ 2=|\overline{x}_1-\overline{x}_2|= 2r \sqrt{1-h^2} \sin{\frac{\pi}{k}}=  O\big(\frac r k\big) $.
 Then we get
\begin{align}\label{estimateofI222}
 I_{222}= O\Big(\Big(\frac{k}{{\bf r}}\Big)^{{N-\epsilon_0}}\Big).
 \end{align}
Next, we consider the term $I_{223}$. In fact, we have
\begin{align*}
I_{223}
=&\int_{\Omega_1^{+} \cap \big\{y :\,|\frac{|y|}{\bf r}-1|\ge \delta  \big\}}\Big[K\Big(\frac{|y|}{{\bf r}}\Big) -1\Big]  U_{\overline{x}_1, \Lambda}^{2^*-1}  \Big(\sum_{j=2}^k  U_{\overline{x}_j, \Lambda}+\sum_{j=1}^k U_{\underline{x}_j, \Lambda}\Big)
\\[2mm]
&+\int_{\Omega_1^{+} \cap \big\{y :\,|\frac{|y|}{\bf r}-1|\le \delta  \big\}}\Big[K\Big(\frac{|y|}{{\bf r}}\Big) -1\Big]  U_{\overline{x}_1, \Lambda}^{2^*-1}  \Big(\sum_{j=2}^k  U_{\overline{x}_j, \Lambda}+\sum_{j=1}^k U_{\underline{x}_j, \Lambda}\Big).
\end{align*}
When  $|\frac{|y|}{\bf r}-1|\ge \delta  $, there hold
\begin{align*}
|y-\overline{x}_1| \ge \big||y|-{\bf r}\big|-\big|{\bf r} -|\overline{x}_1|\big|
\ge \frac{1}{2} \delta {\bf r}.
\end{align*}
And for $ y\in \Omega_1^{+}$ and $|\frac{|y|}{\bf r}-1|\ge \delta$, we have
\begin{align}\label{estialpha0}
\Big(\sum_{j=2}^kU_{\overline{x}_j,\Lambda}+\sum_{j=1}^k U_{\underline{x}_j,\Lambda}\Big) & \le C \Big(\frac{k}{{\bf r}}\Big)^\alpha\frac{1}{\big(1+|y-\overline{x}_{1}|\big)^{N-2-\alpha}},
\end{align}
 with $ \alpha=(\frac{N-2-m}{N-2},\frac{N-2}{2})$. Then we can get easily
\begin{align*}
& \int_{\Omega_1^{+} \cap \big\{y : \,|\frac{|y|}{\bf r}-1|\ge \delta \big\}}\Big[K\Big(\frac{|y|}{{\bf r}}\Big) -1\Big]  U_{\overline{x}_1, \Lambda}^{2^*-1}  \Big(\sum_{j=2}^k  U_{\overline{x}_j, \Lambda}+\sum_{j=1}^k U_{\underline{x}_j, \Lambda}\Big)
\\[2mm]
&\le \frac{C}{{\bf r}^{N-\alpha-\epsilon_0}} \Big(\frac{k}{{\bf r}}\Big)^\alpha
\le C\Big(\frac{k}{{\bf r}}\Big)^{N}.
\end{align*}
If  $|\frac{|y|}{\bf r}-1|\le \delta$, then
\begin{align*}
&\int_{\Omega_1^{+} \cap \big\{y : \,|\frac{|y|}{\bf r}-1|\le \delta  \big\}}  \Big[K\Big(\frac{|y|}{{\bf r}}\Big) -1\Big]  U_{\overline{x}_1, \Lambda}^{2^*-1}  \Big(\sum_{j=2}^k  U_{\overline{x}_j, \Lambda}+\sum_{j=1}^k U_{\underline{x}_j, \Lambda}\Big)
\\[2mm]
&\le \frac{C}{{\bf r}^{m}}  \int_{\Omega_1^{+} \cap \big\{y : \,|\frac{|y|}{\bf r}-1|\le \delta \big\}} \big|| y|-{\bf r} \big|^{m}  U_{\overline{x}_1, \Lambda}^{2^*-1}  \Big(\sum_{j=2}^k  U_{\overline{x}_j, \Lambda}+\sum_{j=1}^k U_{\underline{x}_j, \Lambda}\Big)
\\[2mm]
&=\frac{C}{{\bf r}^{m}}  \int_{\Omega_1^{+} \cap \big\{y : \,|\frac{|y|}{\bf r}-1|\le \delta\big\} \cap\big\{y : \,|y- \overline{x}_1|\le \frac{\delta_1{\bf r}} k\big\}} \big|| y|-{\bf r} \big|^{m}U_{\overline{x}_1, \Lambda}^{2^*-1}  \Big(\sum_{j=2}^k  U_{\overline{x}_j, \Lambda}+\sum_{j=1}^k U_{\underline{x}_j, \Lambda}\Big)
\\[2mm]
&\quad+\frac{C}{{\bf r}^{m}}\int_{\Omega_1^{+} \cap \big\{y : \,|\frac{|y|}{\bf r}-1|\le \delta\big\} \cap\big\{y : \,|y- \overline{x}_1|\ge  \frac{\delta_1{\bf r}} k\big\}}\big|| y|-{\bf r} \big|^{m}  U_{\overline{x}_1, \Lambda}^{2^*-1}  \Big(\sum_{j=2}^k  U_{\overline{x}_j, \Lambda}+\sum_{j=1}^k U_{\underline{x}_j, \Lambda}\Big),
\end{align*}
  where  $ \delta_1 $ is small constant.
If  $|y- \overline{x}_1|\le \frac{\delta_1{\bf r}} k $, it is easy to derive
\begin{equation*}
 \big|| y|-{\bf r} \big| \le|y-\overline{x}_1|+||\overline{x}_1|-{\bf r}|\le \frac{\delta_2{\bf r}} k,
 \end{equation*}
 for some small  $ \delta_2 $. Therefore,
\begin{align*}
 \frac{C}{{\bf r}^{m}} \big|| y|-{\bf r} \big|^{m} \le  \frac{C}{k^{m}}.
\end{align*}
 Hence
\begin{align*}
 &\frac{C}{{\bf r}^{m}}  \int_{\Omega_1^{+} \cap \big\{y : \,|\frac{|y|}{\bf r}-1|\le \delta  \big\} \cap  \big\{y : \,|y- \overline{x}_1|\le \frac{\delta_1{\bf r}} k \big\}} \big|| y|-{\bf r} \big|^{m}  U_{\overline{x}_1, \Lambda}^{2^*-1}  \Big(\sum_{j=2}^k  U_{\overline{x}_j, \Lambda}+\sum_{j=1}^k U_{\underline{x}_j, \Lambda}\Big)
 \\[2mm]
 & \le  \frac{C}{k^{m}}  \int_{\R^N} U_{\overline{x}_1, \Lambda}^{2^*-1}  \Big(\sum_{j=2}^k  U_{\overline{x}_j, \Lambda}+\sum_{j=1}^k U_{\underline{x}_j, \Lambda}\Big)
 \\[2mm]
 & \le \frac{C}{k^{m}} \Big(\frac{k}{{\bf r}}\Big)^{N-2}.
\end{align*}
 When $|y- \overline{x}_1|\ge  \frac{\delta_1{\bf r}} k $, combing \eqref{estialpha0}, we can get easily,
\begin{align*}
& \frac{C}{{\bf r}^{m}}  \int_{\Omega_1^{+} \cap \big\{y : \,|\frac{|y|}{\bf r}-1|\le \delta  \big\} \cap  \big\{y : \,|y- \overline{x}_1|\ge  \frac{\delta_1{\bf r}} k\big\}}
\big|| y|-{\bf r} \big|^{m}  U_{\overline{x}_1, \Lambda}^{2^*-1}  \Big(\sum_{j=2}^k  U_{\overline{x}_j, \Lambda}+\sum_{j=1}^k U_{\underline{x}_j, \Lambda}\Big)
\\[2mm]
&\le C\Big(\frac{k}{{\bf r}}\Big)^{N-\epsilon_0}.
\end{align*}
  Thus we can get
  \begin{align}\label{estimateofI223}
 I_{223}= O\Big(\Big(\frac{k}{{\bf r}}\Big)^{N-\epsilon_0}\Big)+O \Big(\frac{1}{k^{m}} \Big(\frac{k}{{\bf r}}\Big)^{N-2}\Big).
  \end{align}

 Combining \eqref{estimateofI1}, \eqref{estimateofI2}, \eqref{estimateofI21}, \eqref{estimateofI23}, \eqref{estimateofI222} and \eqref{estimateofI223}, we can get
\begin{align*}
I(W_{r,h,\Lambda})
\,=\,&k \Big(1-\frac{2}{2^*}\Big) \int_{\R^N}|U_{0,1}|^{2^*}-k \int_{\R^N}  U_{\overline{x}_1, \Lambda}^{2^*-1}  \Big(\sum_{j=2}^k  U_{\overline{x}_j, \Lambda}+\sum_{j=1}^k U_{\underline{x}_j, \Lambda}\Big)
\\[2mm]
&+\frac{2k}{2^*} \Big[\frac{c_0}{\Lambda^{m}{\bf r}^{m}}
\int_{\R^N}|y_1|^{m}U_{0,1}^{2^*}+\frac{c_0 m(m-1)}{2 \Lambda^{m-2}{\bf r}^{m}}
 \int_{\R^N}|y_1|^{m-2}U_{0,1}^{2^*}({\bf r}-r)^2\Big]
\\[2mm]
&+k\frac{\mathcal {C}(r, \Lambda)}{k^{\frac{m(N-2)}{N-2-m}}}({\bf r} -r)^{2+\sigma}
+k\frac{\mathcal {C}(r, \Lambda)}{{\bf r}^{m+\sigma}}+k O\Big(\Big(\frac{k}{{\bf r}}\Big)^{N-\epsilon_0}\Big)+k O\Big(\frac{1}{k^{m}} \Big(\frac{k}{{\bf r}}\Big)^{N-2}\Big).
\end{align*}

\end{proof}

 \medskip
 Combining Lemma \ref{express7}-\ref{express9}, we can get
 the following Proposition which gives the expression of  $ I(W_{r,h,\Lambda}).  $
 \medskip

\begin{proposition} \label{func}
Suppose that  $ K(|y|)  $ satisfies  ${\bf H} $ and  $N\ge 5 $, $(r,h,\Lambda) \in{{\mathscr  S}_k}$. Then we have
\begin{align} \label{express5}
 I(W_{r,h,\Lambda})
\,=\,& kA_1 -\frac{k}{\Lambda^{N-2}}\Big[\,\frac{B_4 k^{N-2}}{(r \sqrt{1-h^2})^{N-2}}\,+\, \frac{B_5 k}{r^{N-2} h^{N-3} \sqrt {1-h^2}}\,\Big]
\nonumber\\[2mm]
&+k\Big[\frac{A_2}{\Lambda^{m}  k^{\frac{(N-2)m}{N-2-m}}}
+\frac{A_3}{\Lambda^{m-2}  k^{\frac{(N-2)m}{N-2-m}}}({\bf r}-r)^2\Big]
+k\frac{\mathcal {C}(r, \Lambda)}{k^{\frac{m(N-2)}{N-2-m}}}({\bf r} -r)^{2+\sigma}
\nonumber\\[2mm]
&+k\frac{\mathcal {C}(r, \Lambda)}{k^{\frac{m(N-2)}{N-2-m}+\sigma}}
+kO\Big(\frac{1}{k^{\big(\frac{m(N-2)}{N-2-m}+\frac{2(N-3)}{N-1}+\sigma\big)}}\Big),
\end{align}
as $k \to \infty$, where $ A_i,(i=1,2,3), B_4, B_5  $ are positive constants.
 \end{proposition}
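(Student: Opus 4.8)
The plan is to assemble the three preceding lemmas and then control the error terms; the only genuinely delicate point, and the place where assumption ${\bf H}$ is used, is the control of one of those remainders. I would start from the expansion of $I(W_{r,h,\Lambda})$ in Lemma \ref{express9}, in which every term except the interaction integral
\[
\mathcal I:=\int_{\R^N}U_{\overline{x}_1,\Lambda}^{2^*-1}\Big(\sum_{j=2}^k U_{\overline{x}_j,\Lambda}+\sum_{j=1}^k U_{\underline{x}_j,\Lambda}\Big)
\]
is already in the form required by \eqref{express5} once one writes ${\bf r}^{m}=k^{\frac{m(N-2)}{N-2-m}}$ and absorbs ${\bf r}^{-\sigma}$ into a relabelled $\sigma$. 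To expand $\mathcal I$, I would first invoke the two--bubble estimate $\int_{\R^N}U_{\overline{x}_1,\Lambda}^{2^*-1}U_{\overline{x}_i,\Lambda}=\frac{B_0}{\Lambda^{N-2}|\overline{x}_1-\overline{x}_i|^{N-2}}+O(|\overline{x}_1-\overline{x}_i|^{\epsilon_0-N})$ together with its $\underline{x}_i$ analogue, giving
\[
\mathcal I=\frac{B_0}{\Lambda^{N-2}}\Big(\sum_{i=2}^k\frac{1}{|\overline{x}_1-\overline{x}_i|^{N-2}}+\sum_{i=1}^k\frac{1}{|\overline{x}_1-\underline{x}_i|^{N-2}}\Big)+O\Big(\sum_{i=2}^k\frac{1}{|\overline{x}_1-\overline{x}_i|^{N-\epsilon_0}}+\sum_{i=1}^k\frac{1}{|\overline{x}_1-\underline{x}_i|^{N-\epsilon_0}}\Big),
\]
and then substitute the two closed forms of Lemma \ref{express7}. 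Setting $B_4:=B_0B_1$ and $B_5:=B_0B_2$, the leading part of $-k\,\mathcal I$ becomes exactly $-\frac{k}{\Lambda^{N-2}}\big[\frac{B_4 k^{N-2}}{(r\sqrt{1-h^2})^{N-2}}+\frac{B_5 k}{r^{N-2}h^{N-3}\sqrt{1-h^2}}\big]$, i.e.\ the first bracket of \eqref{express5}.

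It then remains to show that every remainder --- the $\sigma_1(k)$ and $\sigma_2(k)$ corrections of Lemma \ref{express7}, the $O(\sum|\cdot|^{\epsilon_0-N})$ coming from the two--bubble estimate, and the two error terms $kO\big((k/{\bf r})^{N-\epsilon_0}\big)$ and $kO\big(k^{-m}(k/{\bf r})^{N-2}\big)$ already present in Lemma \ref{express9} --- is of the announced size $kO\big(k^{-(\frac{m(N-2)}{N-2-m}+\frac{2(N-3)}{N-1}+\sigma)}\big)$. Here I would use that $(r,h,\Lambda)\in{\mathscr S}_k$, so $r\sim{\bf r}=k^{\frac{N-2}{N-2-m}}$ and $h\sim{\bf h}=B'k^{-\frac{N-3}{N-1}}$; in particular $k/{\bf r}=k^{-\frac{m}{N-2-m}}$, $hk\sim k^{\frac{2}{N-1}}$, $({\bf r}-r)$ is bounded, and the retained $h$--term satisfies $\frac{B_5 k}{r^{N-2}h^{N-3}\sqrt{1-h^2}}=O\big(k^{-\frac{m(N-2)}{N-2-m}-\frac{2(N-3)}{N-1}}\big)$. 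With these substitutions each estimate is elementary: $\sigma_1(k)=O(k^{-2})$ (or $O(k^{-2}\ln k)$ if $N=5$) multiplies $(k/{\bf r})^{N-2}$ and is absorbed because $2>\frac{2(N-3)}{N-1}$; $\sigma_2(k)=O((hk)^{-1})=O(k^{-\frac{2}{N-1}})$ multiplies the $h$--term and costs an extra $k^{-\frac{2}{N-1}}$ which is absorbed into $\sigma$; and $k^{-m}(k/{\bf r})^{N-2}$ is absorbed because $m\ge 2>\frac{2(N-3)}{N-1}$.

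The main obstacle is the interaction remainder $(k/{\bf r})^{N-\epsilon_0}=k^{-\frac{m(N-\epsilon_0)}{N-2-m}}$, which also controls $\sum_{i\ge2}|\overline{x}_1-\overline{x}_i|^{\epsilon_0-N}$ and, by the same integral comparison as in the proof of Lemma \ref{express7}, $\sum_{i\ge1}|\overline{x}_1-\underline{x}_i|^{\epsilon_0-N}$: for this to be $o\big(k^{-(\frac{m(N-2)}{N-2-m}+\frac{2(N-3)}{N-1}+\sigma)}\big)$ one needs $\frac{m(2-\epsilon_0)}{N-2-m}>\frac{2(N-3)}{N-1}+\sigma$, i.e.\ --- after letting $\epsilon_0,\sigma\to0$ --- the strict inequality $m>\frac{N-3}{2}$. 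This is precisely guaranteed by ${\bf H}$: for $N=5,6$ one has $m\ge 2>\frac{N-3}{2}$, and for $N\ge 7$ one has $m>\frac{N-2}{2}>\frac{N-3}{2}$. This is also the reason our hypothesis on $m$ is more restrictive than the one in \cite{WY-10-JFA}, where the parameter $h$ --- hence the term $\frac{B_5 k}{r^{N-2}h^{N-3}\sqrt{1-h^2}}$ that we must retain --- is absent and no lower bound on $m$ of this type is required. Once this comparison is in place, collecting the $h$--independent contributions into the functions denoted $\mathcal C(r,\Lambda)$ and relabelling $\sigma$ yields \eqref{express5}.
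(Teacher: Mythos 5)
Your proposal is correct and follows essentially the same route as the paper: combine Lemma \ref{express7}, the two--bubble interaction lemma and Lemma \ref{express9}, set $B_4=B_0B_1$, $B_5=B_0B_2$, and absorb every remainder into $kO\big(k^{-(\frac{m(N-2)}{N-2-m}+\frac{2(N-3)}{N-1}+\sigma)}\big)$ using $r\sim{\bf r}$, $h\sim{\bf h}$ and the hypothesis on $m$. Your identification of the sharp threshold $m>\frac{N-3}{2}$ for absorbing $(k/{\bf r})^{N-\epsilon_0}$ is in fact slightly more precise than the paper's appeal to $m>\frac{N-2}{2}$ (which is not literally satisfied when $N=6$, $m=2$, though the needed inequality $\frac{N-3}{N-1}<\frac{m}{N-2-m}$ still holds there), and all other absorption estimates match the paper's.
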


\begin{proof}
 A direct result of Lemma \ref{express7}-\ref{express9} is
\begin{align*}
 I(W_{r,h,\Lambda})
\,=\,& k A_1 \,-\,\frac{k}{\Lambda^{N-2}} \Big[\,\frac{B_4k^{N-2}}{(r \sqrt{1-h^2})^{N-2}}\,+\,\frac{B_5k}{r^{N-2} h^{N-3} \sqrt {1-h^2}}\,\Big]
\nonumber\\[2mm]
&\,+\,k\Big[\frac{A_2}{\Lambda^{m}{\bf r}^{m}}\,+\,\frac{A_3}{\Lambda^{m-2}{\bf r}^{m}}({\bf r}-r)^2\Big]
\,+\,k\frac{\mathcal {C}(r,\Lambda)}{k^{\frac{m(N-2)}{N-2-m}}}({\bf r} -r)^{2+\sigma}
\\[2mm]
&\,+\,k\frac{\mathcal {C}(r,\Lambda)}{k^{\frac{(N-2)m}{N-2-m}+\sigma}}
\,+\,kO\Big(\Big(\frac{k}{{\bf r}}\Big)^{N-\epsilon_0}\Big)
\,+\,kO\Big(\frac{1}{k^{m}} \Big(\frac{k}{{\bf r}}\Big)^{N-2}\Big)
\nonumber \\[2mm]
&\,+\,kO\Big(\frac{\sigma_1(k) k^{N-2}}{\big(r \sqrt{1-h^2}\big)^{N-2}}\Big)
+k O\Big(\frac{ \sigma_2(k)k}{r^{N-2} h^{N-3} \sqrt {1-h^2}}\Big),
\end{align*}
 with $B_4=B_0B_1, B_5=B_0B_2$ are positive constants. From the expressions of  $ \sigma_1(k), \sigma_2(k)  $ and asymptotic expression  of  $h, r $ as in \eqref{sigmak}, \eqref{definitionofsk}  $,  $ we can show that
 \[ \frac{\sigma_1(k) k^{N-2}}{\big(r \sqrt{1-h^2}\big)^{N-2}},
 \quad \frac{\sigma_2(k)k}{r^{N-2} h^{N-3} \sqrt {1-h^2}} , \]
can be absorbed in  $ O\Big(\frac{1}{k^{\big(\frac{m(N-2)}{N-2-m}+\frac{2(N-3)}{N-1}+\sigma\big)}}\Big). $

\medskip
Noting that
 $ m > \frac{N-2}{2} $ implies
\[  \frac{N-3}{N-1} < \frac{m}{N-2-m},  \]
thus provided with  $ \epsilon_0, \sigma$ small enough, we can get
\[\Big(\frac{k}{{\bf r}}\Big)^{N-\epsilon_0}
= \frac 1 {k^{\frac{m(2-\epsilon_0)}{N-2-m}}} \frac 1 {k^{\frac{m(N-2)}{N-2-m}}}
\le C \frac{1}{k^{\big(\frac{m(N-2)}{N-2-m}+\frac{2(N-3)}{N-1}+\sigma\big)}}. \]
Since  $ m\ge 2 $, we can check that
\[ \frac{1}{k^{m}} \Big(\frac{k}{{\bf r}}\Big)^{N-2}  \le \frac{C}{k^{\big(\frac{m(N-2)}{N-2-m}+\frac{2(N-3)}{N-1}+\sigma\big)}}.\]
Thus we can get \eqref{express5}.
\end{proof}

\medskip
 To get the expansions of $\frac{F(r,h,\Lambda)}{\partial \Lambda}, \frac{F(r,h,\Lambda)}{\partial h}$, we need the following expansions  for $ \frac{\partial I(W_{r,h,\Lambda})}{\partial \Lambda},  \frac{\partial I(W_{r,h,\Lambda})}{\partial h}$.\begin{proposition}\label{func2}
Suppose that  $ K(|y|)  $ satisfies   ${\bf H} $ and  $N\ge 5 $, $(r,h,\Lambda) \in{{\mathscr  S}_k}$.
We have
\begin{align*}
 \frac{\partial I(W_{r,h,\Lambda})}{\partial \Lambda}
\,=\,&\frac{k (N-2)}{\Lambda^{N-1}} \Big[\frac{B_4 k^{N-2}}{(r \sqrt{1-h^2})^{N-2}}+\frac{B_5 k}{r^{N-2} h^{N-3} \sqrt {1-h^2}}\,\Big] \nonumber
\\[2mm]
&-k \Big[\,\frac{m  A_2}{\Lambda^{m+1}k^{\frac{(N-2)m}{N-2-m}}}+\frac{(m-2) A_3}{\Lambda^{m-1} k^{\frac{(N-2)m}{N-2-m}}} ({\bf r}-r)^2\,\Big]
+kO \Big(\frac 1 {k^{\frac{(N-2)m}{N-2-m}+\sigma}}\Big),
\end{align*}
as $k \to \infty$, where the constants  $B_i, i= 4,5 $ and  $A_i, i=2,3 $ are  defined in Proposition \ref{func}.

\end{proposition}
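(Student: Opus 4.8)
The plan is to compute $\partial_\Lambda I(W_{r,h,\Lambda})$ directly and to recognize the outcome as the term-by-term $\Lambda$-derivative of the expansion already established in Proposition \ref{func}; the only genuinely new point is to verify that the $O(\cdot)$-remainders there differentiate into remainders of the same order in $k$. First I would differentiate under the integral sign and integrate by parts:
\[
\frac{\partial I(W_{r,h,\Lambda})}{\partial\Lambda}
= \int_{\R^N}\nabla W_{r,h,\Lambda}\cdot\nabla\frac{\partial W_{r,h,\Lambda}}{\partial\Lambda}
- \int_{\R^N}K\Big(\tfrac{|y|}{{\bf r}}\Big)W_{r,h,\Lambda}^{2^*-1}\frac{\partial W_{r,h,\Lambda}}{\partial\Lambda}.
\]
Since each Aubin--Talenti bubble solves \eqref{criticalequation}, one has $-\Delta W_{r,h,\Lambda}=\sum_{j=1}^k(U_{\overline x_j,\Lambda}^{2^*-1}+U_{\underline x_j,\Lambda}^{2^*-1})$ and $\partial_\Lambda W_{r,h,\Lambda}=\sum_{j=1}^k(\overline{\mathbb{Z}}_{3j}+\underline{\mathbb{Z}}_{3j})$, so the computation reduces to estimating
\[
\int_{\R^N}\Big[\sum_{j=1}^k\big(U_{\overline x_j,\Lambda}^{2^*-1}+U_{\underline x_j,\Lambda}^{2^*-1}\big)-K\Big(\tfrac{|y|}{{\bf r}}\Big)W_{r,h,\Lambda}^{2^*-1}\Big]\sum_{j=1}^k\big(\overline{\mathbb{Z}}_{3j}+\underline{\mathbb{Z}}_{3j}\big).
\]

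Next, following the scheme of the proof of Lemma \ref{express9}, I would split $K=1+(K-1)$ and use the $O(3)$-symmetry to localize on $\Omega_1^+$. The piece coming from $K-1$ is controlled through assumption ${\bf H}$ and the bounds $|\overline{\mathbb{Z}}_{3j}|\le C(1+|y-\overline x_j|)^{-(N-2)}$ of \eqref{estimatekernel}; on the region $\{\,|\,|y|/{\bf r}-1|\le\delta\,\}$ one Taylor-expands $|y|$ about ${\bf r}$ exactly as in Lemma \ref{express9}, now with $U_{\overline x_1,\Lambda}^{2^*-1}\partial_\Lambda U_{\overline x_1,\Lambda}=\tfrac1{2^*}\partial_\Lambda\big(U_{\overline x_1,\Lambda}^{2^*}\big)$ in place of $U_{\overline x_1,\Lambda}^{2^*}$, so that this piece equals $-\partial_\Lambda$ of the $A_2$-, $A_3$- and $\mathcal C(r,\Lambda)$-contributions of Proposition \ref{func}; collecting the first two gives $-k\big[\tfrac{mA_2}{\Lambda^{m+1}k^{(N-2)m/(N-2-m)}}+\tfrac{(m-2)A_3}{\Lambda^{m-1}k^{(N-2)m/(N-2-m)}}({\bf r}-r)^2\big]$ and the rest is absorbed into the stated error. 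The piece with $K$ replaced by $1$ is the bubble-interaction term; repeating the lattice sums of Lemma \ref{express7} and the convolution estimate for $\int_{\R^N}U_{\overline x_1,\Lambda}^{2^*-1}U_{\overline x_i,\Lambda}$, now with $\overline{\mathbb{Z}}_{3,1}=\partial_\Lambda U_{\overline x_1,\Lambda}$ in place of $U_{\overline x_1,\Lambda}$, recovers $\partial_\Lambda\big[\tfrac{B_4 k^{N-2}}{\Lambda^{N-2}(r\sqrt{1-h^2})^{N-2}}+\tfrac{B_5 k}{\Lambda^{N-2}r^{N-2}h^{N-3}\sqrt{1-h^2}}\big]$, i.e. the first bracket in the statement, with an admissible remainder. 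Here, as in Proposition \ref{func}, the within-circle interactions (centres at mutual distance $\sim r/k$) produce the $B_4$-term while the top--bottom interactions (distance $\sim rh$) produce the $B_5$-term.

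The step I expect to be the main obstacle is the justification that the $O(\cdot)$-remainders of Proposition \ref{func} differentiate into remainders with the same exponents of $k$, since differentiation of an asymptotic estimate is not automatic. I would handle this by returning to the explicit integral representations used in Lemmas \ref{express7}--\ref{express9} and differentiating under the integral sign: because $(r,h,\Lambda)\in{\mathscr S}_k$ forces $\Lambda$ into the fixed compact interval $[\Lambda_0-\hat\sigma,\Lambda_0+\hat\sigma]\subset(0,+\infty)$, the functions $\partial_\Lambda U_{x,\Lambda}$, $\partial_\Lambda(U_{x,\Lambda}^{2^*-1})$ and $\partial_\Lambda(U_{x,\Lambda}^{2^*})$ obey the same pointwise decay bounds as $U_{x,\Lambda}$, $U_{x,\Lambda}^{2^*-1}$ and $U_{x,\Lambda}^{2^*}$ respectively, with constants independent of $k$; hence every estimate performed in those lemmas survives verbatim after applying $\partial_\Lambda$, and the error terms keep their order. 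Assembling the three pieces then yields the stated expansion. As the remaining bookkeeping is essentially identical to that of the corresponding result in \cite{WY-10-JFA}, I would present only this outline in the paper and refer there for the routine computations.
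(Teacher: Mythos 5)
Your outline is correct and follows the same route the paper intends: the paper gives no proof here, simply deferring to \cite{WY-10-JFA}, and your computation (differentiate under the integral, integrate by parts using $-\Delta W_{r,h,\Lambda}=\sum_j(U_{\overline x_j,\Lambda}^{2^*-1}+U_{\underline x_j,\Lambda}^{2^*-1})$, split $K=1+(K-1)$, and rerun the estimates of Lemmas \ref{express7}--\ref{express9} with $\overline{\mathbb{Z}}_{3j}$ in place of $U_{\overline x_j,\Lambda}$) is exactly the standard argument, including the correct observation that the remainders must be re-estimated rather than formally differentiated. No gaps.
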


\begin{proof} The proof of this proposition is standard and the reader can refer to \cite{WY-10-JFA} for details.
\end{proof}

 \medskip
  \begin{proposition}\label{func3}
Suppose that  $ K(|y|)  $ satisfies  ${\bf H} $ and  $N\ge 5 $, $(r,h,\Lambda) \in{{\mathscr  S}_k}$. Then we have
\begin{align} \label{express6}
\frac{\partial I(W_{r,h,\Lambda})}{\partial h}
\,=\,&  -\frac{k}{\Lambda^{N-2}}\Big[\, (N-2) \frac{B_4 k^{N-2}}{ r^{N-2}( \sqrt{1-h^2})^{N}} h -(N-3) \frac{B_5 k}{ r^{N-2}h^{N-2} \sqrt {1-h^2}}  \,\Big]
\nonumber\\[2mm]
& \quad  +kO\Big(\frac{1}{k^{\big(\frac{m(N-2)}{N-2-m}+\frac{(N-3)}{N-1}+\sigma\big)}}\Big)
\end{align}
as $k \to \infty$.
\end{proposition}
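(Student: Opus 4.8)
The plan is to differentiate the expansion of $I(W_{r,h,\Lambda})$ recorded in Lemma \ref{express9} (equivalently Proposition \ref{func}) directly with respect to $h$, after noticing that its $h$‑dependence is concentrated in a single explicit term. First I would revisit the decomposition $I(W_{r,h,\Lambda})=I_1-I_2$ of Lemma \ref{express9} and observe that, among the main‑order terms of the final expansion, only the interaction term $-k\int_{\R^N}U_{\overline{x}_1,\Lambda}^{2^*-1}\bigl(\sum_{j=2}^k U_{\overline{x}_j,\Lambda}+\sum_{j=1}^k U_{\underline{x}_j,\Lambda}\bigr)$ depends on $h$. Indeed $kA_1$, the curvature terms $k\bigl[\frac{A_2}{\Lambda^m{\bf r}^m}+\frac{A_3}{\Lambda^{m-2}{\bf r}^m}({\bf r}-r)^2\bigr]$, and the remainders $k\frac{\mathcal{C}(r,\Lambda)}{{\bf r}^m}({\bf r}-r)^{2+\sigma}$ and $k\frac{\mathcal{C}(r,\Lambda)}{{\bf r}^{m+\sigma}}$ are all independent of $h$; here it is crucial that in the computation of $I_{21}$ the quantity $\int_{\R^N}\bigl||y+\overline{x}_1|-{\bf r}\bigr|^m U_{0,\Lambda}^{2^*}$ depends on $\overline{x}_1$ only through $|\overline{x}_1|=r$ (by the radial symmetry of $U_{0,\Lambda}$), hence not on $h$.

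Next I would differentiate the interaction term. Using $\int_{\R^N}U_{\overline{x}_1,\Lambda}^{2^*-1}U_{\overline{x}_i,\Lambda}=\frac{B_0}{\Lambda^{N-2}|\overline{x}_1-\overline{x}_i|^{N-2}}+O\bigl(|\overline{x}_1-\overline{x}_i|^{-(N-\epsilon_0)}\bigr)$ and its analogue with $\underline{x}_i$, together with the distance‑sum asymptotics of Lemma \ref{express7}, this term equals $-\frac{k}{\Lambda^{N-2}}\bigl[\frac{B_4 k^{N-2}}{(r\sqrt{1-h^2})^{N-2}}+\frac{B_5 k}{r^{N-2}h^{N-3}\sqrt{1-h^2}}\bigr]$ plus remainders. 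Since $|\overline{x}_1-\overline{x}_i|^2=4r^2(1-h^2)\sin^2\frac{(i-1)\pi}{k}$ and $|\overline{x}_1-\underline{x}_i|^2=4r^2\bigl[(1-h^2)\sin^2\frac{(i-1)\pi}{k}+h^2\bigr]$, a direct differentiation of these two explicit fractions produces exactly $-\frac{k}{\Lambda^{N-2}}\bigl[(N-2)\frac{B_4 k^{N-2}}{r^{N-2}(\sqrt{1-h^2})^N}h-(N-3)\frac{B_5 k}{r^{N-2}h^{N-2}\sqrt{1-h^2}}\bigr]$, up to the single byproduct $-\frac{k}{\Lambda^{N-2}}\frac{B_5 k}{r^{N-2}h^{N-4}(1-h^2)^{3/2}}$, which is smaller by a factor $h^2\sim k^{-2(N-3)/(N-1)}$ than the second main term and is therefore absorbed into the stated remainder.

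The main obstacle is the $C^1$‑control of the error terms: one must bound the $h$‑derivatives of all quantities that entered Lemma \ref{express9} and Lemma \ref{express7} only with $C^0$ (size) estimates — the corrections $\sigma_1(k)$, $\sigma_2(k)$, and the remainders $O((k/{\bf r})^{N-\epsilon_0})$, $O(k^{-m}(k/{\bf r})^{N-2})$, $O((k/{\bf r})^N)$ coming from $I_{222}$, $I_{223}$, $I_{23}$ and from the truncation $\Omega_1^+\leftrightarrow\R^N$. For each I would return to the underlying integral or sum and differentiate under the integral (resp. summation) sign: the centres satisfy $\bigl|\frac{\partial\overline{x}_j}{\partial h}\bigr|+\bigl|\frac{\partial\underline{x}_j}{\partial h}\bigr|\le Cr$, and the chain rule — applied in particular to the closest top–bottom pair $\overline{x}_1,\underline{x}_1$ (at mutual distance $2rh$) — shows that each differentiation costs at most one factor $1/h\sim k^{(N-3)/(N-1)}/B'$. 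Hence every such remainder, which in $I(W_{r,h,\Lambda})$ is of size $kO\bigl(k^{-(\frac{m(N-2)}{N-2-m}+\frac{2(N-3)}{N-1}+\sigma)}\bigr)$, becomes after differentiation $kO\bigl(k^{-(\frac{m(N-2)}{N-2-m}+\frac{(N-3)}{N-1}+\sigma)}\bigr)$, which matches the claimed error; verifying each bound reduces to the same elementary exponent inequalities used in the proof of Proposition \ref{func}, which is exactly where the restriction $m>\frac{N-2}{2}$ (for $N\ge7$), resp. $m\ge2$ (for $N=5,6$), enters. An alternative would be to start from $\frac{\partial I(W_{r,h,\Lambda})}{\partial h}=\bigl\langle I'(W_{r,h,\Lambda}),\frac{\partial W_{r,h,\Lambda}}{\partial h}\bigr\rangle=-\int_{\R^N}{\bf l}_k\,\frac{\partial W_{r,h,\Lambda}}{\partial h}$, but extracting the main part that way requires tracking cancellations between the curvature factor and the interaction, so the direct route above is cleaner; in all cases the computation is parallel to the corresponding one in \cite{WY-10-JFA}.
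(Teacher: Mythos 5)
Your proposal is correct and follows essentially the same route as the paper: reduce $\frac{\partial I(W_{r,h,\Lambda})}{\partial h}$ to $-k\,\frac{\partial}{\partial h}\int_{\R^N}U_{\overline{x}_1,\Lambda}^{2^*-1}\bigl(\sum_{j=2}^k U_{\overline{x}_j,\Lambda}+\sum_{j=1}^k U_{\underline{x}_j,\Lambda}\bigr)$ plus controlled remainders, differentiate the explicit distance asymptotics of Lemma \ref{express7}, and absorb the byproduct term $\frac{B_5 k}{r^{N-2}h^{N-4}(1-h^2)^{3/2}}$ (which the paper also records) into the error. You in fact supply more detail than the paper's ``tedious but straightforward analysis,'' correctly identifying both the need for $C^1$ control of the remainders via differentiation under the integral sign at cost $O(1/h)$ and the point where the restriction on $m$ enters.
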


\begin{proof}
Recall
\begin{align} \label{estimate kernel}
\overline{\mathbb{Z}}_{2j}
 \le  C  \frac{r} {(1+|y-\overline{x}_j|  )^{N-1}  },
  \quad   \underline{\mathbb{Z}}_{2j}  \le  C  \frac{r} {(1+|y-\underline{x}_j|  )^{N-1}  }.
\end{align}
We know that
\begin{align}\label{express10}
 \frac{\partial I(W_{r,h,\Lambda})}{\partial h}
 \,=\,&  \frac{1}{2}  \frac{\partial } {\partial h } \int_{\mathbb{R}^N}|\nabla W_{r,h,\Lambda}|^2- \frac{1}{2^*}  \frac{\partial } {\partial h}   \int_{\mathbb{R}^N} K\Big(\frac{|y|}{{\bf r}}\Big) W_{r,h,\Lambda}^{2^*}
\nonumber\\[2mm]
=\,&  k \frac{\partial } {\partial h } \int_{\mathbb{R}^N} U_{\overline{x}_1, \Lambda}^{2^*-1}\Big(\sum_{j=2}^kU_{\overline{x}_j, \Lambda}\,+\,\sum_{i=1}^k  U_{\underline{x}_j, \Lambda}\Big)
\nonumber\\[2mm]
\quad &- \int_{\mathbb{R}^N} K\Big(\frac{|y|}{{\bf r}}\Big) W_{r,h,\Lambda}^{2^*-1}    \Big(   \overline{\mathbb{Z}}_{21}
+ \sum_{j=2 }^k  \overline{\mathbb{Z}}_{2j}
+ \sum_{j=1}^k   \underline{\mathbb{Z}}_{2j}   \Big).
\end{align}
From \eqref{express10}, similar to the  calculations in the proof of Proposition \ref{express9},  we can get
\begin{align}   \label{frac h}
   \frac{\partial I(W_{r,h,\Lambda})}{\partial h} =  - k    \frac{\partial }{\partial h}  \int_{\R^N}   U_{\overline{x}_1, \Lambda}^{2^*-1}   \Big(\sum_{j=2}^kU_{\overline{x}_j, \Lambda}\,+\,\sum_{i=1}^k  U_{\underline{x}_j, \Lambda}\Big)  +  k^2O  \Big(\Big(\frac{k}{{\bf r}}\Big)^{N-\epsilon_0}\Big).
  \end{align}

Then by some  tedious but straightforward analysis, we can get
    \begin{align}
\frac{\partial I(W_{r,h,\Lambda})}{\partial h}
\,=\,&  -\frac{k}{\Lambda^{N-2}}\Big[\, (N-2) \frac{B_4 k^{N-2}}{ r^{N-2}( \sqrt{1-h^2})^{N}} h -(N-3) \frac{B_5 k}{ r^{N-2}h^{N-2} \sqrt {1-h^2}}  \,\Big]
\nonumber\\[2mm]
& \quad \quad +h \frac{B_5 k}{r^{N-2} h^{N-3}({1-h^2})^{\frac 32}}  \,\Big]  +   k^2O  \Big(\Big(\frac{k}{{\bf r}}\Big)^{N-\epsilon_0}\Big),
\end{align}
for some $\epsilon_0$ small enough.
In fact, we  know that  $ k  \Big(\frac{k}{{\bf r}}\Big)^{N-\epsilon_0}$ and $  h \frac{B_5 k}{r^{N-2} h^{N-3}({1-h^2})^{\frac 32}} $  can be absorbed in   $O\Big(\frac{1}{k^{\big(\frac{m(N-2)}{N-2-m}+\frac{(N-3)}{N-1}+\sigma\big)}}\Big)$  provided with
$ m $ satisfying \eqref{assumptionform}   and $ \epsilon_0, \sigma$ small enough.   Then  we can get  \eqref{express6} directly.

\end{proof}

 \section{Some basic estimates and lemmas} \label{appendixB}

 \begin{lem} \label{b.0}
  Under the condition  $(r,h,\Lambda) \in {{\mathscr  S}_k} $, for  $ y\in \Omega_1^{+}  $  there exists a constant $C$ such that
\begin{align*}
 \Big(\sum_{j=2}^k  U_{\overline{x}_j, \Lambda}+\sum_{j=1}^k U_{\underline{x}_j, \Lambda}\Big)
  \le  C\Big(\frac{k}{{\bf r}}\Big)^\alpha  \frac{1}{\big(1+|y-\overline{x}_{1}|\big)^{N-2-\alpha}},
\end{align*}
 with  $ \alpha=(1, N-2)  $.
 \end{lem}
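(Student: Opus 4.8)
The plan is to reduce the estimate to the elementary pointwise bound $U_{x,\Lambda}(y)\le C(1+|y-x|)^{-(N-2)}$, valid uniformly for $\Lambda$ in the bounded interval appearing in $\mathscr{S}_k$, and then to combine it with two geometric comparisons available on $\Omega_1^+$. First, since the top centres $\overline x_1,\dots,\overline x_k$ all have the same distance from the $y_3$-axis and the same $y_3$-coordinate, and $y$ lies in the angular sector around $\overline x_1$, the nearest top centre to $y$ is $\overline x_1$: $|y-\overline x_1|\le|y-\overline x_j|$ for all $j=2,\dots,k$; together with the triangle inequality this yields $|y-\overline x_j|\ge\frac12|\overline x_1-\overline x_j|$. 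Second, since $\underline x_j$ is the reflection of $\overline x_j$ across $\{y_3=0\}$ and $y_3\ge0$ on $\Omega_1^+$, one has $|y-\underline x_j|\ge|y-\overline x_j|$ for every $j$, and in addition $|y-\underline x_1|\ge y_3+rh\ge rh$.

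Next I would split each bubble by the interpolation identity
\[
\frac1{(1+|y-x|)^{N-2}}=\frac1{(1+|y-x|)^{\alpha}}\,\frac1{(1+|y-x|)^{N-2-\alpha}},
\]
which is meaningful precisely because $1<\alpha<N-2$. Since $|y-x|\ge|y-\overline x_1|$ for every centre $x$ occurring in the two sums and $N-2-\alpha>0$, the second factor is bounded by $(1+|y-\overline x_1|)^{-(N-2-\alpha)}$, which I pull out in front. For the first factor I use the comparisons above: it is at most $C|\overline x_1-\overline x_j|^{-\alpha}$ when $x=\overline x_j$ or $x=\underline x_j$ with $j\ge2$, and at most $(rh)^{-\alpha}$ for the single remaining term $x=\underline x_1$. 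Hence the left-hand side is bounded by
\[
\frac{C}{(1+|y-\overline x_1|)^{N-2-\alpha}}\Big(\sum_{j=2}^k\frac1{|\overline x_1-\overline x_j|^{\alpha}}+\frac1{(rh)^{\alpha}}\Big).
\]

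It then remains to bound the bracket by $C(k/{\bf r})^{\alpha}$. For the sum I use $|\overline x_1-\overline x_j|=2r\sqrt{1-h^2}\sin\frac{(j-1)\pi}{k}$, the inequality $\sin t\ge\frac{2}{\pi}t$ on $[0,\pi/2]$ together with the symmetry of $\sin$ about $\pi/2$, and $r\sqrt{1-h^2}\ge\frac12{\bf r}$ on $\mathscr{S}_k$, so that $\sum_{j=2}^k|\overline x_1-\overline x_j|^{-\alpha}\le C\big(k/{\bf r}\big)^{\alpha}\sum_{i\ge1}i^{-\alpha}$; the series converges exactly because $\alpha>1$, which is where the constraint $\alpha\in(1,N-2)$ enters essentially. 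For the remaining term, on $\mathscr{S}_k$ one has $h\sim B'k^{-(N-3)/(N-1)}$ with $(N-3)/(N-1)<1$, hence $rh\ge c\,{\bf r}/k$ for $k$ large and $(rh)^{-\alpha}\le C(k/{\bf r})^{\alpha}$. Collecting the three contributions gives the asserted bound. The only step that is not mere bookkeeping is the first geometric comparison (nearest top centre) and its reflection counterpart for the bottom circle; this reflection part is the one genuinely new feature relative to the planar construction of \cite{WY-10-JFA}, and once it is in place the rest is the same type of weighted summation already used in Section~\ref{sec3}.
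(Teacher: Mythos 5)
Your proposal is correct and follows essentially the same route as the paper's proof of Lemma \ref{b.0}: the same pointwise decay bound for the bubbles, the same geometric comparisons ($\overline x_1$ is the nearest top centre on the sector $\Omega_1^+$, reflection across $\{y_3=0\}$ handles the bottom circle, and $|y-\underline x_1|\ge rh\ge c\,{\bf r}/k$), the same splitting of the exponent $N-2=\alpha+(N-2-\alpha)$, and the same summation of $|\overline x_1-\overline x_j|^{-\alpha}$ using $\alpha>1$. No further comment is needed.
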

 \begin{proof}
 For  $ y\in \Omega_1^{+}  $ and $ j= 2, \cdots, k  $, we have
\begin{align*}
|y-\overline{x}_{j}|\ge|\overline{x}_1-\overline{x}_j|-|y-\overline{x}_1|\ge \frac 14|\overline{x}_1-\overline{x}_j|,  \quad \text {if} ~|y-\overline{x}_1|\le  \frac 14|\overline{x}_1-\overline{x}_j|,
\end{align*}
 and
\begin{align*}
|y-\overline{x}_{j}|\ge|y-\overline{x}_1|\ge \frac 14|\overline{x}_1-\overline{x}_j|,  \quad \text {if} ~|y-\overline{x}_1|\ge \frac 14|\overline{x}_1-\overline{x}_j|,
\end{align*}
\begin{align*}
|y-\underline{x}_{i}| \ge \frac 14|\overline{x}_1-\underline{x}_1|\ge C\Big(\frac r k\Big) .
\end{align*}
Then
\begin{align*}
 \Big(\sum_{j=2}^k  U_{\overline{x}_j, \Lambda}+\sum_{j=1}^k U_{\underline{x}_j, \Lambda}\Big)
 &\le\frac{C}{\big(1+|y-\overline{x}_{1}|\big)^{N-2-\alpha}} \Big[\sum_{j=2}^k  \frac{1}{\big(1+|y-\overline{x}_{j}|\big)^{\alpha}}+\frac{1}{\big(1+|y-\underline{x}_{1}|\big)^{\alpha}}\Big]
\\[2mm]
&\le \frac{C}{\big(1+|y-\overline{x}_{1}|\big)^{N-2-\alpha}}  \Big[\sum_{j=2}^k \frac{1}{|\overline{x}_1-\overline{x}_j|^{\alpha}}+\frac{1}{|\overline{x}_1-\underline{x}_1|^{\alpha}}\Big]
\\[2mm]
&\le \frac{C}{\big(1+|y-\overline{x}_{1}|\big)^{N-2-\alpha}} \Big(\frac{k}{{\bf r}}\Big)^\alpha .
\end{align*}
 \end{proof}

\begin{lem} \label{b.01}
Under the condition  $(r,h,\Lambda) \in {{\mathscr  S}_k} $, for  $ y\in \Omega_1^{+}  $  we have
\begin{align*}
   \Big(  \sum_{j=2 }^k  \overline{\mathbb{Z}}_{2j}  +   \sum_{j=1 }^k   \underline{\mathbb{Z}}_{2j}   \Big)
  \le  C\Big(\frac{k}{{\bf r}}\Big)^\alpha  \frac{\bf r}{\big(1+|y-\overline{x}_{1}|\big)^{N-1-\alpha}},
\end{align*}
 with  $ \alpha=(1, N-1)  $.
 \end{lem}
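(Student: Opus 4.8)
\textbf{Proof plan for Lemma \ref{b.01}.}
The plan is to follow the same strategy as in the proof of Lemma \ref{b.0}, but replacing the bubble $U_{x,\Lambda}$ by its $h$-derivative $\overline{\mathbb{Z}}_{2j}=\partial U_{\overline x_j,\Lambda}/\partial h$ (resp.\ $\underline{\mathbb{Z}}_{2j}$), which by the pointwise bound \eqref{estimate kernel} decays like $r(1+|y-\overline x_j|)^{-(N-1)}$ rather than $(1+|y-\overline x_j|)^{-(N-2)}$. The extra factor $r$ and the faster decay are exactly the reason the exponent $N-2-\alpha$ in Lemma \ref{b.0} gets replaced by $N-1-\alpha$, and the range of admissible $\alpha$ becomes $(1,N-1)$.

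First I would fix $y\in\Omega_1^+$ and recall the three elementary distance estimates already used in the proof of Lemma \ref{b.0}: for $j=2,\dots,k$ one has $|y-\overline x_j|\ge \tfrac14|\overline x_1-\overline x_j|$ (splitting into the cases $|y-\overline x_1|\le\tfrac14|\overline x_1-\overline x_j|$ and its complement, using the triangle inequality together with $y\in\Omega_1^+$), and $|y-\underline x_i|\ge\tfrac14|\overline x_1-\underline x_1|\ge C(r/k)$. Then, using \eqref{estimate kernel} and writing, for $\alpha\in(1,N-1)$,
\[
\frac{1}{(1+|y-\overline x_j|)^{N-1}}\le \frac{1}{(1+|y-\overline x_1|)^{N-1-\alpha}}\,\frac{1}{(1+|y-\overline x_j|)^{\alpha}},
\]
(which is legitimate precisely because $N-1-\alpha>0$ and $\alpha>0$), I would bound
\[
\Big(\sum_{j=2}^k\overline{\mathbb{Z}}_{2j}+\sum_{j=1}^k\underline{\mathbb{Z}}_{2j}\Big)
\le \frac{C\,{\bf r}}{(1+|y-\overline x_1|)^{N-1-\alpha}}\Big[\sum_{j=2}^k\frac{1}{|\overline x_1-\overline x_j|^{\alpha}}+\frac{1}{|\overline x_1-\underline x_1|^{\alpha}}\Big].
\]
Here I have also used $r\le C{\bf r}$ on ${{\mathscr S}_k}$ to replace the prefactor $r$ by ${\bf r}$.

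It remains to estimate the lattice sum $\sum_{j=2}^k|\overline x_1-\overline x_j|^{-\alpha}+|\overline x_1-\underline x_1|^{-\alpha}$. Since $|\overline x_1-\overline x_j|=2r\sqrt{1-h^2}\sin\frac{(j-1)\pi}{k}\ge C\,r\,\frac{j-1}{k}$ for $j\le k/2$ (and by symmetry for the other half), the sum $\sum_{j=2}^k|\overline x_1-\overline x_j|^{-\alpha}$ is bounded by $C(k/r)^\alpha\sum_{j\ge1}(1/j^\alpha)$ when $\alpha>1$, hence by $C(k/{\bf r})^\alpha$; and $|\overline x_1-\underline x_1|^{-\alpha}=(2rh)^{-\alpha}\le C(k/{\bf r})^\alpha$ because $rh\ge C{\bf r}/k$ on ${{\mathscr S}_k}$. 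Combining these two bounds gives the claimed estimate. The only genuinely delicate point is tracking the constraint $\alpha\in(1,N-1)$: the lower bound $\alpha>1$ is needed for convergence of $\sum j^{-\alpha}$, and the upper bound $\alpha<N-1$ is needed so that the exponent $N-1-\alpha$ of $(1+|y-\overline x_1|)$ stays positive; everything else is a routine repetition of the computation in Lemma \ref{b.0}.
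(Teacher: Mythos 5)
Your proposal is correct and is exactly the argument the paper has in mind: the paper's ``proof'' of Lemma \ref{b.01} consists only of the remark that it is similar to Lemma \ref{b.0}, and your write-up carries out that adaptation faithfully — same distance estimates on $\Omega_1^+$, same exponent-splitting trick, same lattice-sum bound, with the bubble replaced by its $h$-derivative via \eqref{estimate kernel}, which accounts for the extra factor ${\bf r}$ and the shift from $N-2-\alpha$ to $N-1-\alpha$. Your tracking of the constraint $\alpha\in(1,N-1)$ is also the right bookkeeping.
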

\begin{proof}
The proof of Lemma \ref{b.01} is similar to Lemma \ref{b.0}. We omit the details for concise.
\end{proof}

For each fixed  $i $ and  $j $,  $i\neq j $, we consider the following
function
\begin{equation*}
g_{ij}(y)=\frac{1}{(1+|y-x_{j}|)^{\gamma_{1}}}\frac{1}{(1+|y-x_{i}|)^{\gamma_{2}}},
\end{equation*}
where  $\gamma_{1}\geq 1 $ and  $\gamma_{2}\geq 1 $ are two constants.
 \begin{lem}\label{lemb1}(Lemma B.1, \cite{WY-10-JFA})
 For any constants  $0<\upsilon\leq \min\{\gamma_{1},\gamma_{2}\} $, there is a constant  $C>0 $, such that
  $$
 g_{ij}(y)\leq \frac{C}{|x_{i}-x_{j}|^{\upsilon}}\Big(\frac{1}{(1+|y-x_{i}|)^{\gamma_{1}+\gamma_{2}-\upsilon}}+\frac{1}{(1+|y-x_{j}|)^{\gamma_{1}+\gamma_{2}-\upsilon}}\Big).
  $$
 \end{lem}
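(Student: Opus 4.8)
The plan is to split $\mathbb{R}^N$ into the region where $y$ is closer to $x_i$ and the region where $y$ is closer to $x_j$, and in each region trade a power of one factor for the distance $|x_i - x_j|$ via the triangle inequality. Set $d = |x_i - x_j|$. First I would consider the region $A_i = \{y : |y - x_i| \le |y - x_j|\}$. On $A_i$, the triangle inequality gives $|y - x_j| \ge \tfrac12\big(|y-x_i| + |y-x_j|\big) \ge \tfrac12\big(d - |y-x_i| + |y-x_i|\big)$, hence more usefully $1 + |y - x_j| \ge c(1 + |y-x_i|)$ and also $1 + |y-x_j| \ge c(1+d)$ when $|y - x_i|$ is comparable to or smaller than $d$; combining these two lower bounds as a geometric interpolation yields $1 + |y-x_j| \ge c\,(1 + |y-x_i|)^{1 - \theta}(1+d)^{\theta}$ for any $\theta \in [0,1]$. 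Choosing $\theta$ so that the exponent $\gamma_1$ on the $x_j$-factor is apportioned as $\upsilon$ onto the $(1+d)$ part — which requires $\upsilon \le \gamma_1$, and after symmetry $\upsilon \le \gamma_2$ — gives on $A_i$
\begin{equation*}
g_{ij}(y) \le \frac{C}{(1+d)^{\upsilon}}\,\frac{1}{(1+|y-x_i|)^{\gamma_1 + \gamma_2 - \upsilon}}.
\end{equation*}

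Next I would handle the complementary region $A_j = \{y : |y - x_j| < |y - x_i|\}$ in exactly the same way with the roles of $i$ and $j$ interchanged (using $\upsilon \le \gamma_2$ there), obtaining the bound with $(1+|y-x_j|)^{\gamma_1+\gamma_2-\upsilon}$ in the denominator. Adding the two partial estimates — or rather taking the max over the two regions — produces the stated two-term bound
\begin{equation*}
g_{ij}(y) \le \frac{C}{|x_i - x_j|^{\upsilon}}\Big(\frac{1}{(1+|y-x_i|)^{\gamma_1+\gamma_2-\upsilon}} + \frac{1}{(1+|y-x_j|)^{\gamma_1+\gamma_2-\upsilon}}\Big),
\end{equation*}
where I replace $1 + d$ by $d$ in the prefactor at the cost of adjusting $C$, which is harmless since $d \ge 1$ in the application (the centers are far apart) or, if one prefers full generality, one simply keeps $(1+|x_i-x_j|)^{\upsilon}$ throughout.

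The only genuinely delicate point is the geometric interpolation step: one must check that on $A_i$ one really does have both $1 + |y-x_j| \gtrsim 1 + |y - x_i|$ and $1 + |y - x_j| \gtrsim 1 + d$ uniformly. The first is immediate from $|y-x_j| \ge |y-x_i|$. The second needs a short case distinction: if $|y - x_i| \le d/2$ then $|y-x_j| \ge d - |y-x_i| \ge d/2$; if $|y-x_i| > d/2$ then $|y - x_j| \ge |y - x_i| > d/2$. So in all cases $1 + |y - x_j| \ge c(1+d)$ with $c = 1/2$, say. Raising the first bound to the power $\gamma_1 - \upsilon$ and the second to the power $\upsilon$ and multiplying gives $(1+|y-x_j|)^{\gamma_1} \ge c\,(1+|y-x_i|)^{\gamma_1 - \upsilon}(1+d)^{\upsilon}$, which is precisely what is needed to extract the factor $(1+d)^{-\upsilon}$ while leaving $(1+|y-x_i|)^{-(\gamma_1-\upsilon)}$ to combine with the original $(1+|y-x_i|)^{-\gamma_2}$ factor. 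Since this is Lemma B.1 of \cite{WY-10-JFA}, no further novelty is required; the argument above is the standard one and I would simply reproduce it, or cite it directly.
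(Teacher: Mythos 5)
Your argument is correct and is the standard one: the paper itself gives no proof of this lemma, simply citing Lemma B.1 of \cite{WY-10-JFA}, where essentially the same region-splitting argument (split into $\{|y-x_i|\le|y-x_j|\}$ and its complement, then use $1+|y-x_j|\ge 1+|y-x_i|$ together with $1+|y-x_j|\ge \tfrac12(1+|x_i-x_j|)$ to peel off the power $\upsilon$) appears. One cosmetic remark: passing from $(1+|x_i-x_j|)^{-\upsilon}$ to $|x_i-x_j|^{-\upsilon}$ in the prefactor is free, since the latter is always the larger quantity, so no assumption $|x_i-x_j|\ge 1$ is needed.
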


\begin{lem}\label{lemb2}(Lemma B.2, \cite{WY-10-JFA})
For any constant  $0<\beta <N-2 $, there is a constant  $C>0 $, such
that
 $$
\int_{\R^N} \frac{1}{|y-z|^{N-2}}\frac{1}{(1+|z|)^{2+\beta}}{\mathrm d}z \leq
\frac{C}{(1+|y|)^{\beta}}.
 $$
\end{lem}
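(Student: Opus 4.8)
The plan is to prove this classical convolution estimate by a direct region decomposition around the point $y$, exactly in the spirit of Lemma B.2 in \cite{WY-10-JFA}. Write $\rho = |y|$. First I would dispose of the range $\rho \le 1$: there the right-hand side is comparable to a fixed constant, and the left-hand side is bounded uniformly because the Riesz kernel $|y-z|^{2-N}$ is locally integrable (since $N-2 < N$) and the integrand decays like $|z|^{-(N-2)-(2+\beta)} = |z|^{-(N+\beta)}$ at infinity, which is integrable. So it remains to treat $\rho \ge 1$ and to produce the bound $C\rho^{-\beta}$.

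For $\rho \ge 1$ I would split $\R^N$ into the four sets $D_1 = \{|z| \le \rho/2\}$, $D_2 = \{|z-y| \le \rho/2\}$, $D_3 = \{|z| > \rho/2,\ |z-y| > \rho/2,\ |z| \le 2\rho\}$ and $D_4 = \{|z| > 2\rho\}$, which cover $\R^N$. On $D_1$ one has $|y-z| \ge \rho/2$, hence $|y-z|^{2-N} \le C\rho^{2-N}$, and since $2+\beta < N$ the weight satisfies $\int_{|z|\le \rho/2}(1+|z|)^{-(2+\beta)}\,dz \le C\rho^{N-2-\beta}$; the product is $C\rho^{-\beta}$. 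On $D_2$ one has $|z| \ge \rho/2$, hence $(1+|z|)^{-(2+\beta)} \le C\rho^{-(2+\beta)}$, while $\int_{|z-y|\le\rho/2}|y-z|^{2-N}\,dz \le C\rho^2$ (integrability of $|w|^{2-N}$ near $0$), so again $C\rho^{-\beta}$. On $D_3$ both $|z| \le 2\rho$ gives $(1+|z|)^{-(2+\beta)} \le C\rho^{-(2+\beta)}$ and $\int_{|z-y|\le 3\rho}|y-z|^{2-N}\,dz \le C\rho^2$, giving $C\rho^{-\beta}$. Finally, on $D_4$ one has $|z-y| \ge |z| - |y| \ge |z|/2$, hence $|y-z|^{2-N} \le C|z|^{2-N}$, and $\int_{|z|\ge 2\rho}|z|^{2-N}(1+|z|)^{-(2+\beta)}\,dz \le C\int_{|z|\ge 2\rho}|z|^{-(N+\beta)}\,dz = C\rho^{-\beta}$ since $N+\beta > N$. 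Summing the four contributions yields $\int_{\R^N}|y-z|^{2-N}(1+|z|)^{-(2+\beta)}\,dz \le C\rho^{-\beta} \le C(1+|y|)^{-\beta}$, which is the assertion.

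There is no genuine obstacle in this argument; it is a bookkeeping exercise. The only three places where the hypotheses are actually used are: the inequality $\int_{|z|\le\rho/2}(1+|z|)^{-(2+\beta)}\,dz \le C\rho^{N-2-\beta}$ on $D_1$, which needs $2+\beta < N$, i.e. $\beta < N-2$; the convergence of the $D_4$ integral, which needs $\beta > 0$; and the local integrability of $|w|^{2-N}$ used on $D_2$ and $D_3$, which holds because $N - 2 < N$. Since the result is quoted from \cite{WY-10-JFA}, one may alternatively simply cite that reference, but the self-contained decomposition above is short enough to include.
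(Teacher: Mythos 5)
Your argument is correct: the four regions $D_1,\dots,D_4$ do cover $\R^N$, each of the four estimates is valid, and the hypotheses $\beta>0$ and $\beta<N-2$ are used exactly where you say they are (convergence at infinity on $D_4$ and the growth rate $\rho^{N-2-\beta}$ on $D_1$, respectively). The paper itself gives no proof of this lemma --- it simply quotes it as Lemma B.2 of \cite{WY-10-JFA} --- and your region decomposition around $y$ is the standard argument used there, so your write-up is a faithful self-contained substitute for the citation.
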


\begin{lemma}\label{laa3}

Suppose that  $N\ge 5 $ and  $\tau\in(0, 2), y=(y_1, \cdots, y_N)  $. Then there is a small
 $\sigma>0 $, such that
when  $ y_3 \geq 0  $,
\[
\begin{split}
&\int_{\R^N}\frac1{|y-z|^{N-2}} W_{r,h,\Lambda}^{\frac4{N-2}}(z)\sum_{j=1}^k\frac1{(1+|z-\overline{x}_j|)^{\frac{N-2}{2}+\tau}}\, {\mathrm d}z \\[2mm]
 & \le C\sum_{j=1}^k\frac1{(1+|y-\overline{x}_j|)^{\frac{N-2}{2}+\tau+\sigma}},
\end{split}
\]
 and when  $  y_3 \le 0 $,
\[
\begin{split}
&\int_{\R^N}\frac1{|y-z|^{N-2}} W_{r,h,\Lambda}^{\frac4{N-2}}(z)\sum_{j=1}^k\frac1{(1+|z-\underline{x}_j|)^{\frac{N-2}{2}+\tau}}\, {\mathrm d}z \\[2mm]
 &\le C\sum_{j=1}^k\frac1{(1+|y-\underline{x}_j|)^{\frac{N-2}{2}+\tau+\sigma}}.
\end{split}
\]

\end{lemma}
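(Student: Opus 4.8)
The plan is to peel off the dominant bubble in $W_{r,h,\Lambda}^{2^*-2}$, reduce the weighted convolution to the two elementary integrals handled by Lemmas \ref{lemb1} and \ref{lemb2}, and to extract the extra decay exponent $\sigma$ from the wide separation of the bubbles, i.e.\ from ${\bf r}/k\to\infty$ and $rh\to\infty$. It suffices to treat the case $y_3\ge0$ together with the sum over the top centres $\overline{x}_j$, since the case $y_3\le0$ is obtained verbatim after interchanging $\overline{x}_j$ and $\underline{x}_j$. First I would bound $W_{r,h,\Lambda}^{2^*-2}$ pointwise. When $N\ge6$ one has $2^*-2=\tfrac{4}{N-2}\le1$, so subadditivity of $t\mapsto t^{2^*-2}$ gives at once
\begin{equation*}
W_{r,h,\Lambda}^{2^*-2}(z)\le C\sum_{i=1}^k\Big[(1+|z-\overline{x}_i|)^{-4}+(1+|z-\underline{x}_i|)^{-4}\Big].
\end{equation*}
When $N=5$, $2^*-2=\tfrac{4}{3}>1$ and the above fails globally; instead I would use the partition $\R^N=\bigcup_i(\Omega_i^+\cup\Omega_i^-)$ and, on $\Omega_i^+$, write $W_{r,h,\Lambda}=U_{\overline{x}_i,\Lambda}+\rho_i$ with $\rho_i=\sum_{j\ne i}U_{\overline{x}_j,\Lambda}+\sum_j U_{\underline{x}_j,\Lambda}$, so that Lemma \ref{b.0} gives $\rho_i(z)\le C(k/{\bf r})^{\alpha}(1+|z-\overline{x}_i|)^{-(N-2-\alpha)}$ for $\alpha\in(1,N-2)$; then $W_{r,h,\Lambda}^{4/3}\le C\big(U_{\overline{x}_i,\Lambda}^{4/3}+\rho_i^{4/3}\big)$ on $\Omega_i^+$, with the mirror decomposition centred at $\underline{x}_i$ on $\Omega_i^-$, and the remainder term carries the small factor $(k/{\bf r})^{4\alpha/3}$.

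After this step the left‑hand side is majorised by a finite sum of integrals of the form
\begin{equation*}
\int_{\R^N}\frac{1}{|y-z|^{N-2}}\,\frac{1}{(1+|z-x_p|)^{a}}\,\frac{1}{(1+|z-\overline{x}_q|)^{\frac{N-2}{2}+\tau}}\,dz,\qquad x_p\in\{\overline{x}_i,\underline{x}_i\},\ \ a\ge4,
\end{equation*}
with $a=4$ for the leading terms and $a=(N-2-\alpha)\tfrac{4}{N-2}>2$ for the remainder terms. The only role of the hypothesis $y_3\ge0$ is the elementary identity $|y-\underline{x}_i|^2-|y-\overline{x}_i|^2=4y_3\,rh\ge0$, i.e.\ $|y-\underline{x}_i|\ge\max\{|y-\overline{x}_i|,\,rh\}$, which converts any decay in $|y-\underline{x}_i|$ into decay in $|y-\overline{x}_i|$ at the cost of a small power of $(rh)^{-1}$.

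For the diagonal pieces $x_p=\overline{x}_q$ one combines the weights and uses Lemma \ref{lemb2} (or, when the combined exponent $a+\tfrac{N-2}{2}+\tau$ exceeds $N$, a direct splitting of the integral into $\{|z-\overline{x}_q|\le\frac12|y-\overline{x}_q|\}$ and its complement), obtaining a bound $C(1+|y-\overline{x}_q|)^{-(\frac{N-2}{2}+\tau+\sigma)}$ for any $\sigma\le2$. For the off‑diagonal pieces I would apply Lemma \ref{lemb1} with an exponent $\upsilon\in(1,2-\sigma]$, $\upsilon\le\min\{a,\tfrac{N-2}{2}+\tau\}$, then Lemma \ref{lemb2}, and finally sum over the bubble indices using
\begin{equation*}
\sum_{i}\frac{1}{|\overline{x}_i-\overline{x}_q|^{\upsilon}}\le C\Big(\frac{k}{{\bf r}}\Big)^{\upsilon},\qquad
\sum_{i}\frac{1}{|\underline{x}_i-\overline{x}_q|^{\upsilon}}\le C\Big(\frac{k}{{\bf r}}\Big)^{\upsilon}\qquad(\upsilon>1),
\end{equation*}
the second estimate relying on $|\overline{x}_i-\underline{x}_j|\ge c\max\{|\overline{x}_i-\overline{x}_j|,\,rh\}$ together with $rh\ge C{\bf r}/k$ on ${{\mathscr S}_k}$; after the conversion above and Lemma \ref{b.0} summed in $i$, one lands on $C\sum_j(1+|y-\overline{x}_j|)^{-(\frac{N-2}{2}+\tau+\sigma)}$. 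Since $k/{\bf r}=k^{-m/(N-2-m)}\to0$, the factor $(k/{\bf r})^{\upsilon}$ upgrades the exponent $a+\tfrac{N-2}{2}+\tau-\upsilon-2\ge\tfrac{N-2}{2}+\tau$ to $\tfrac{N-2}{2}+\tau+\sigma$ for a suitable small $\sigma>0$, which is exactly the claim; the remainder contributions from the first step carry the additional gains $(k/{\bf r})^{4\alpha/3}$ or $(rh)^{-\sigma'}$ and are controlled in the same way.

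\textbf{Main obstacle.} The two places that require care are: the case $N=5$, where $2^*-2>1$ precludes global subadditivity and forces the region‑by‑region dominant‑bubble decomposition of the first step; and the bookkeeping around the threshold $\beta=N-2$ in Lemma \ref{lemb2}, so that whenever the combined weight decays faster than $(1+|z-\overline{x}_q|)^{-N}$ one must revert to the elementary near/far splitting of the kernel $|y-z|^{2-N}$, still gaining at least two powers over $\tfrac{N-2}{2}+\tau$. All remaining manipulations are routine, and the case $y_3\le0$ follows by the obvious symmetry.
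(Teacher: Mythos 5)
Your proposal follows the same route as the proof the paper omits (the paper simply cites Lemma B.3 of \cite{WY-10-JFA}): dominate $W_{r,h,\Lambda}^{2^*-2}$ by a sum of single bubbles (region by region via Lemma \ref{b.0} when $2^*-2>1$), reduce to two-weight convolutions handled by Lemmas \ref{lemb1} and \ref{lemb2}, extract the extra exponent $\sigma$ from $\sum_i|x_i-\overline{x}_q|^{-\upsilon}\le C(k/{\bf r})^{\upsilon}$ for $\upsilon>1$, and use $|y-\underline{x}_i|\ge\max\{|y-\overline{x}_i|,\,rh\}$ for $y_3\ge0$ to transfer decay from the bottom circle to the top one; all of this is sound and is exactly the adaptation of the cited argument that the doubled configuration requires. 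The one quantitative slip is in the saturated diagonal case: when the combined weight exponent exceeds $N$, the near/far splitting of $|y-z|^{2-N}$ yields only $(1+|y-\overline{x}_q|)^{-(N-2)}$, so the gain over $\tfrac{N-2}{2}+\tau$ is $\tfrac{N-2}{2}-\tau$ rather than \emph{any} $\sigma\le2$; this is positive, and your argument closes, precisely when $\tau<\tfrac{N-2}{2}$, which covers the paper's actual choice $\tau<1$ in \eqref{tau and epsilon} but not the full stated range $\tau\in(0,2)$ when $N=5$ (a defect of the lemma's statement rather than of your argument).
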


\begin{proof}
The proof of Lemma \ref{laa3} is similar to Lemma B.3  in \cite{WY-10-JFA}.  Here we omit it.
\end{proof}
%

\begin{lemma}   \label{B6}
Suppose that  $N\ge 5 $ and $m$ satisfies \eqref{assumptionform}.  We have
\begin{align}  \label{eee}
 {\bf r}  \max\Big\{ \frac1{k^{(\frac m{N-2-m})(
N+2-2\frac{N-2-m}{N-2}- 2\epsilon_1 )}},  \frac1{k^{(\frac {N-2}{N-2-m}) \min\{2m, m+3\}}} \Big\}\le     \frac{C}{k^{\big(\frac{m(N-2)}{N-2-m}+\frac{(N-3)}{N-1}+\sigma\big)}},
\end{align}
provided with   $\sigma, \epsilon_1$ small enough.
\end{lemma}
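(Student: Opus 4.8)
The plan is to turn \eqref{eee} into two elementary comparisons of powers of $k$. Recall ${\bf r}=k^{\frac{N-2}{N-2-m}}$, so each side of \eqref{eee} is a constant times a power of $k$, and since the left-hand side is a maximum of two such powers it suffices to bound each of the two terms separately by the right-hand side; a bound $k^{a}\le Ck^{b}$ for all large $k$ holds whenever $a\le b$. Thus the lemma reduces to two inequalities between exponents, one for each term in the $\max$.

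For the first term I would multiply by ${\bf r}$ and use the identity $N+2-2\frac{N-2-m}{N-2}=N+\frac{2m}{N-2}$; a direct rearrangement then shows that the required exponent inequality is equivalent to
\[
\frac{2m+\frac{2m^2}{N-2}-(N-2)}{N-2-m}\ \ge\ \frac{N-3}{N-1}+\sigma+\frac{2m\epsilon_1}{N-2-m}.
\]
Setting $P=N-2$ and $Q=N-2-m$ (so $0<Q<P$ and $m=P-Q$), the left-hand side equals $\frac{3P^2-6PQ+2Q^2}{PQ}$. I would then invoke the factorization $3P^2-7PQ+2Q^2=(3P-Q)(P-2Q)$: since $3P-Q>0$, this is $\ge 0$ as soon as $P-2Q\ge 0$, i.e. $m\ge\frac{N-2}{2}$, which is exactly what hypothesis ${\bf H}$ supplies ($m\ge 2\ge\frac{N-2}{2}$ for $N=5,6$, and $m>\frac{N-2}{2}$ for $N\ge 7$). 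Hence the left-hand side above is $\ge 1>\frac{N-3}{N-1}$, and since the gap $1-\frac{N-3}{N-1}=\frac{2}{N-1}$ is a fixed positive number, choosing $\sigma$ and $\epsilon_1$ small enough (depending only on $N$ and $m$) so that $\sigma+\frac{2m\epsilon_1}{N-2-m}<\frac{2}{N-1}$ establishes the inequality for this term.

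For the second term the analogous computation reduces matters to $\frac{N-2}{N-2-m}\bigl(\min\{2m,m+3\}-1-m\bigr)\ge\frac{N-3}{N-1}+\sigma$, where $\min\{2m,m+3\}-1-m=\min\{m-1,\,2\}$. When $m\ge 3$ this quantity equals $2$, so the left side is $\ge\frac{2(N-2)}{N-2-m}\ge 2$; when $2\le m<3$ it equals $m-1\ge 1$ while $\frac{N-2}{N-2-m}>1$, so the left side is $>1$. In either case it exceeds $\frac{N-3}{N-1}<1$ with room to absorb a small $\sigma$, which finishes the proof. The only place where the structure of ${\bf H}$ is genuinely used is the first term: if $m<\frac{N-2}{2}$ the factor $P-2Q$ turns negative, the estimate $\frac{3P^2-6PQ+2Q^2}{PQ}\ge 1$ — and in fact \eqref{eee} itself — breaks down; everything else is bookkeeping with exponents, so I expect the only real work to be the first-term comparison.
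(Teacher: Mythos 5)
Your proposal is correct and follows essentially the same route as the paper: after substituting ${\bf r}=k^{\frac{N-2}{N-2-m}}$, both arguments reduce the first (harder) term to the single exponent inequality $3\frac{N-2}{N-2-m}+2\frac{N-2-m}{N-2}>6+\frac{N-3}{N-1}$, which the paper dismisses as ``simple computations'' and which you verify cleanly via the factorization $3P^2-7PQ+2Q^2=(3P-Q)(P-2Q)$ with $P=N-2$, $Q=N-2-m$, correctly pinpointing $m\ge\frac{N-2}{2}$ from hypothesis ${\bf H}$ as the decisive input. The treatment of the second term and the absorption of $\sigma,\epsilon_1$ into the gap $\frac{2}{N-1}$ are likewise sound.
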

\begin{proof}    It's easy to show that
\begin{align*}
 \frac{\bf r}{k^{(\frac {N-2}{N-2-m}) \min\{2m, m+3\}}} \le     \frac{C}{k^{\big(\frac{m(N-2)}{N-2-m}+\frac{(N-3)}{N-1}+\sigma\big)}},
\end{align*}
for $m\ge 2$.  In order to get \eqref{eee}, we just need to show
\begin{align}   \label{eeee}
   \frac {\bf r} {k^{(\frac m{N-2-m})(
N+2-2\frac{N-2-m}{N-2}- 2\epsilon_1 )}}  =    \frac{k^{\frac{N-2}{N-2-m} }}{k^{(\frac m{N-2-m})(
N+2-2\frac{N-2-m}{N-2}- 2\epsilon_1 )}}\le     \frac{C}{k^{\big(\frac{m(N-2)}{N-2-m}+\frac{(N-3)}{N-1}+\sigma\big)}},
\end{align}      for some $\sigma, \epsilon_1$ small.  The problem to show \eqref{eeee} can be reduced to  show  that $~ 6+   \frac{(N-3)}{N-1}  <  3(\frac{N-2} {N-2-m} ) + 2\frac{N-2-m}{N-2}$,  for $m$ satisfying  \eqref{assumptionform}.  This inequality follows by simple computations. This fact concludes the proof.
\end{proof}

\medskip

{\bf Acknowledgements:}
L. Duan  was supported by the China Scholarship Council and NSFC grant (No.11771167) and  Technology Foundation of Guizhou Province ($[2001]$ZK008). M. Musso was supported by  EPSRC research Grant EP/T008458/1. S. Wei was supported by the NSFC grant (No.12001203) and Guangdong Basic and Applied Basic Research Foundation (No. 2020A1515110622). Some part of the work  was done  during the visit of  L. Duan  to Prof. M. Musso at the University of Bath. L. Duan would like to thank the Department of Mathematical Sciences for its warm hospitality and supports.

 \end{document}